\documentclass[12pt,british,refpage,intoc,bibliography=totoc,index=totoc,BCOR=7.5mm,captions=tableheading]{extarticle}

\usepackage[T1]{fontenc}
\usepackage[latin9]{inputenc}
\usepackage[a4paper]{geometry}
\geometry{verbose,tmargin=0.9in,bmargin=1.1in,lmargin=1in,rmargin=1in}
\usepackage{color}
\usepackage{babel}
\usepackage{amsmath}
\usepackage{amsthm}
\usepackage{amssymb}
\usepackage[numbers]{natbib}
\usepackage[unicode=true,pdfusetitle,
 bookmarks=true,bookmarksnumbered=true,bookmarksopen=false,
 breaklinks=false,pdfborder={0 0 0},pdfborderstyle={},backref=false,colorlinks=true]
 {hyperref}
\hypersetup{
 linkcolor=black, citecolor=blue, urlcolor=black, filecolor=blue, pdfpagelayout=OneColumn, pdfnewwindow=true, pdfstartview=XYZ, plainpages=false}
\usepackage{breakurl}

\makeatletter
\numberwithin{equation}{section}
\numberwithin{figure}{section}
\theoremstyle{plain}
\newtheorem{thm}{\protect\theoremname}[section]
  \theoremstyle{remark}
  \newtheorem{rem}[thm]{\protect\remarkname}
  \theoremstyle{definition}
  \newtheorem{defn}[thm]{\protect\definitionname}
  \theoremstyle{definition}
  \newtheorem{example}[thm]{\protect\examplename}
  \theoremstyle{plain}
  \newtheorem{cor}[thm]{\protect\corollaryname}
  \theoremstyle{plain}
  \newtheorem{lem}[thm]{\protect\lemmaname}
  \theoremstyle{plain}
  \newtheorem{prop}[thm]{\protect\propositionname}
  \theoremstyle{plain}
  \newtheorem*{lem*}{\protect\lemmaname}

\@ifundefined{date}{}{\date{}}
\usepackage{caption}
\usepackage[nottoc]{tocbibind}

\makeatother

  \providecommand{\corollaryname}{Corollary}
  \providecommand{\definitionname}{Definition}
  \providecommand{\examplename}{Example}
  \providecommand{\lemmaname}{Lemma}
  \providecommand{\propositionname}{Proposition}
  \providecommand{\remarkname}{Remark}
\providecommand{\theoremname}{Theorem}

\begin{document}

\title{Parabolic type equations associated with the Dirichlet form on the
Sierpinski gasket}

\author{Xuan~Liu\thanks{Nomura International, 30/FL Two International Finance Centre, Hong
Kong. Email: \protect\href{mailto:chamonixliu@163.com}{chamonixliu@163.com}.\protect \\
This research was carried out when this author was reading DPhil in
Mathematics at the University of Oxford.}\hspace{0.7em}and Zhongmin~Qian\thanks{Research supported partly by an ERC grant (Grant Agreement No. 291244
ESig). Mathematical Institute, University of Oxford, Oxford, OX2 6GG,
United Kingdom. Email: \protect\href{mailto:zhongmin.qian@maths.ox.ac.uk}{zhongmin.qian@maths.ox.ac.uk}}}
\maketitle
\begin{abstract}
By using analytic tools from stochastic analysis, we initiate a study
of some non-linear parabolic equations on Sierpinski gasket, motivated
by modellings of fluid flows along fractals (which can be considered
as models of simplified rough porous media). Unlike the regular space
case, such parabolic type equations involving non-linear convection
terms must take a different form, due to the fact that convection
terms must be singular to the ``linear part'' which defines the
heat semigroup. In order to study these parabolic type equations,
a new kind of Sobolev inequalities for the Dirichlet form on the gasket
will be established. These Sobolev inequalities, which are interesting
on their own and in contrast to the case of Euclidean spaces, involve
two $L^{p}$ norms with respect to two mutually singular measures.
By examining properties of singular convolutions of the associated
heat semigroup, we derive the space-time regularity of solutions to
these parabolic equations under a few technical conditions. The Burgers
equations on the Sierpinski gasket are also studied, for which a maximum
principle for solutions is derived using techniques from backward
stochastic differential equations, and the existence, uniqueness,
and regularity of its solutions are obtained.
\end{abstract}
\begin{center}
\emph{\small{}}%
\begin{minipage}[t]{0.66\paperwidth}%
\textbf{\small{}Keywords}{\small{}\enskip{}Brownian motion, Dirichlet
forms, Sierpinski gasket,}\textbf{ }{\small{}Sobolev inequalities,
semi-linear parabolic equations\medskip{}
}{\small \par}

\textbf{\small{}Mathematics Subject Classification}{\small{} }\textbf{\small{}(2000)}{\small{}\enskip{}28A80,
60J45}{\small \par}%
\end{minipage}
\par\end{center}{\small \par}

\section{\label{sec:-2}Introduction}

The analysis on fractals has attracted attentions of researchers in
the last decades, not only for the reason that fractals are archetypal
examples of spaces without suitable smooth structure, but also because
fractals are examples of interesting models in statistical mechanics.
Many objects in nature (e.g. percolation clusters in disordered media,
complex biology systems, polymeric materials, and etc.) possess features
of fractals (see e.g. \citep{Man77} for details). Fractals appear
as scaling limits of lattices. Lattice models (e.g. the Ising models
and their variants) have been extensively studied in statistical mechanics,
and properties for scaling limits have been derived using conformal
field theory in dimension two. 

Since a calculus on fractals is not available, the theory of Dirichlet
forms on measure-metric spaces and stochastic calculus are the analytic
tools employed for the study of analysis problems on fractals, and
many interesting results have been established in the past decades.

Early works on analysis on fractals however have been focused mainly
on diffusion processes and the corresponding Dirichlet forms (see
e.g. \citep{BP88,Ki89,Ham92,Ki93,KL93,FHK94,BH97,BK97,Ham97,Ki01}
and etc.). Brownian motion on the Sierpinski gasket was first constructed
by S.~Goldstein and S.~Kusuoka as the limit of a sequence of (scaled)
random walks on lattices (cf. \citep{Gold87,Ku87}). J.~Kigami \citep{Ki89}
has obtained an analytic construction of the Dirichlet form via finite
difference schemes. The construction of gradients of functions with
finite energy has been given in S.~Kusuoka in \citep{Ku89}, where
a significant difference between Euclidean spaces and fractals has
also been revealed (see \citep[Section 6]{Ku89}). On the Sierpinski
gasket for example, volumes of sets and energies of functions are
measured in terms of two mutually singular measures, the Hausdorff
measure and Kusuoka's measure (see Section \ref{sec:-3} below for
definitions). By virtue of the results obtained in \citep{Ku89},
gradients of functions on the Sierpinski gasket may be defined as
square integrable functions with respect to Kusuoka's measure (cf.
Section \ref{sec:-3}). Roughly speaking, the gradient of a function
with finite energy is the square root of the density of its energy
measure with respect to Kusuoka's measure. There have been interests
in the understanding of gradients of functions and non-linear partial
differential equations on fractals with non-linearities involving
first-order derivatives (see e.g. \citep{Tep00,IRT12,HT13,HRT13,HT15}
and references therein). A new class of semi-linear parabolic equations
involving singular measures on the Sierpinski gasket was proposed
and studied in \citep{LQ16}, where, among other things, a Feynman\textendash Kac
representation was obtained assuming the existence of weak solutions.

In the present paper, we establish the existence and uniqueness of
solutions to the semi-linear parabolic PDEs proposed in \citep{LQ16},
and derive the regularity of solutions. A crucial ingredient in our
argument is a new type of Sobolev inequalities on the Sierpinski gasket
(and the infinite gasket) involving different measures (which can
be mutually singular). To author's knowledge, this type of Sobolev
inequalities on fractals has not been investigated before, and is
of mathematical interests on its own. We formulate and study the Burgers
equations on the gasket, which is an archetype of non-linear PDEs
with non-Lipschitz coefficients, and also as a simplified model of
flows in porous medium. The difficulty in our case is that there exists
no suitable analogue of the Cole-Hopf transformation on the gasket.
Instead we tackle the problem by using a Feynman\textendash Kac representation
and an iteration argument.

This paper is organized as follows. We introduce in Section \ref{sec:-3}
the notations and definitions which will be effective throughout the
paper. Several preliminary results are also reviewed in the same section.
In Section \ref{sec:}, we give the formulation and the proof of new
Sobolev inequalities on the Sierpinski gasket (and the infinite gasket),
which will be needed in latter sections. The optimal exponents and
a sufficient and necessary condition for the validity of these inequalities
are also given in this section. Section \ref{sec:-1} is devoted to
the semi-linear parabolic PDEs on the gasket, where we establish the
existence and uniqueness and the regularity of solutions. In Section
\ref{sec:-4}, we apply the results in previous sections to the study
of the Burgers equations on the gasket, which are the analogues of
the Burgers equations on $\mathbb{R}$.

The results of this paper are presented only for the Sierpinski gasket
in $\mathbb{R}^{2}$, we however believe that our results also hold
for Sierpinski gaskets in higher dimensions. The main results and
the arguments given in this paper can be adapted accordingly without
difficulties.

\section{\label{sec:-3}Preliminaries}

In this section, we set up several notations and definitions which
will be in force throughout this paper.

\subsubsection*{Sierpinski gaskets}

Let $\mathbf{F}_{i}:\mathbb{R}^{2}\to\mathbb{R}^{2},\;i=1,2,3$ be
the contractions defined by $\mathbf{F}_{1}(x)=2^{-1}x,\;\mathbf{F}_{2}(x)=2^{-1}[x+(1,0)],\;\mathbf{F}_{3}(x)=2^{-1}[x+(1/2,\sqrt{3}/2)],\;x\in\mathbb{R}^{2}$.
Let $\mathrm{V}_{0}=\big\{(0,0),\,(1,0),\,(1/2,\sqrt{3}/2)\big\}$.
Define $\mathrm{V}_{m},\,m\in\mathbb{N}_{+}$ inductively by $\mathrm{V}_{m}=\bigcup_{i=1,2,3}\mathbf{F}_{i}(\mathrm{V}_{m-1})$.
Let $\hat{\mathrm{V}}_{m}=\bigcup_{k=0}^{\infty}2^{k}\,[\mathrm{V}_{m+k}\cup(-\mathrm{V}_{m+k})],\;m\in\mathbb{N}$.
The \emph{(compact) Sierpinski gasket} $\mathbb{S}$ and\emph{ }the
\emph{infinite Sierpinski gasket} $\hat{\mathbb{S}}$ are defined
to be the closures $\mathbb{S}=\mathrm{cl}\,(\bigcup_{m=0}^{\infty}\mathrm{V}_{m})$
and $\hat{\mathbb{S}}=\mathrm{cl}\,(\bigcup_{m=0}^{\infty}\hat{\mathrm{V}}_{m})$
respectively. $\hat{\mathbb{S}}$ can be written as a countable union
$\hat{\mathbb{S}}=\bigcup_{i\in\mathbb{Z}}\tau_{i}(\mathbb{S})$,
where $\tau_{i}:\mathbb{R}^{2}\to\mathbb{R}^{2},\;i\in\mathbb{Z}$
are translations of $\mathbb{R}^{2}$ such that $\tau_{i}(\mathbb{S}),\,i\in\mathbb{Z}$
have non-overlapping interiors. To our purpose, the labelling of the
translations $\tau_{i},\;i\in\mathbb{Z}$ is immaterial. We should
point out that there are many different infinite versions of $\mathbb{S}$
(see e.g. \citep[Section 5]{Tep98}). The $\hat{\mathbb{S}}$ we use
in the present paper is only one of them.

Let $\mathrm{W}_{\ast}=\{\omega=\omega_{1}\omega_{2}\omega_{3}\mathord{\dots}:\omega_{i}\in\{1,2,3\},\;i\in\mathbb{N}_{+}\}$
the set of infinite ordered sequences $\omega$ of symbols in $\{1,2,3\}$.
For each $\omega=\omega_{1}\omega_{2}\omega_{3}\mathord{\dots}\in\mathrm{W}_{\ast}$
and each $m\in\mathbb{N}_{+}$, let $[\omega]_{m}=\omega_{1}\omega_{2}\dots\omega_{m}$,
define $\mathbf{F}_{[\omega]_{m}}=\mathbf{F}_{\omega_{1}}\mathbf{F}_{\omega_{2}}\cdots\mathbf{F}_{\omega_{m}}$,
and $\mathbb{S}_{[\omega]_{m}}=\mathbf{F}_{[\omega]_{m}}\big(\mathbb{S}\big)$.
As a convention, we define $\mathbf{F}_{[\omega]_{0}}=\mathbf{Id}$.
The \emph{Hausdorff measure} on $\mathbb{S}$ is the unique Borel
probability measure $\nu$ on $\mathbb{S}$ such that $\nu\big(\mathbb{S}_{[\omega]_{m}}\big)=3^{-m}$
for all $\omega\in\mathrm{W}_{\ast},\,m\in\mathbb{N}$ , and the Hausdorff
measure on $\hat{\mathbb{S}}$ is the unique Borel measure $\hat{\nu}$
on $\hat{\mathbb{S}}$ such that $(\hat{\nu}\circ\tau_{i})|_{\mathbb{S}}=\nu$
for all $i\in\mathbb{Z}$.

\subsubsection*{Standard Dirichlet forms}

For each $m\in\mathbb{N}$ and any functions $u,v$ on $\bigcup_{m=0}^{\infty}\mathrm{V}_{m}$,
let
\begin{equation}
\mathcal{E}^{(m)}(u,v)=\sum_{\substack{x,y\in\mathrm{V}_{m}:|x-y|=2^{-m}}
}2^{-1}\,(5/3)^{m}\,[u(x)-u(y)][v(x)-v(y)].\label{eq:-63}
\end{equation}
The sequence $\{\mathcal{E}^{(m)}(u,u)\}_{m\in\mathbb{N}}$ is non-decreasing
(cf. \citep[Section 2.2 and Section 2.4]{Ki01}), therefore $\mathcal{E}(u,u)=\lim_{m\to\infty}\mathcal{E}^{(m)}(u,u)$
exists (possibly infinite), and the limit will be denoted by $\mathcal{E}(u)$
for simplicity.

Let $\mathcal{F}(\mathbb{S})=\big\{ u:u\ \text{is a function on}\ \bigcup_{m=0}^{\infty}\mathrm{V}_{m}\ \text{with}\ \mathcal{E}(u)<\infty\big\}$.
According to \citep[Theorem 2.2.6]{Ki01}, every function $u\in\mathcal{F}(\mathbb{S})$
uniquely extends to a continuous function on $\mathbb{S}$, in other
words, $\mathcal{F}(\mathbb{S})\subseteq C(\mathbb{S})$. $(\mathcal{E},\mathcal{F}(\mathbb{S}))$
is called the \emph{standard Dirichlet form }on $\mathbb{S}$, which
is a regular local Dirichlet form on $L^{2}(\mathbb{S};\nu)$. $\left(\mathcal{E},\mathcal{F}(\mathbb{S})\right)$
possesses the property of self-similarity in the sense that
\[
\mathcal{E}(u,v)=\sum_{i=1,2,3}(5/3)\,\mathcal{E}\big(u\circ\mathbf{F}_{i},v\circ\mathbf{F}_{i}\big),\;\;u,v\in\mathcal{F}(\mathbb{S}).
\]
Let $\mathcal{L}$ be the self-adjoint non-positive operator on $\mathrm{Dom}(\mathcal{L})\subseteq L^{2}(\mathbb{S};\nu)$
associated with $(\mathcal{E},\mathcal{F}(\mathbb{S}))$. 

Let $\mathcal{F}(\mathbb{S}\backslash\mathrm{V}_{0})=\big\{ u\in\mathcal{F}(\mathbb{S}):u\vert_{{\scriptscriptstyle \mathrm{V}_{0}}}=0\big\}$.
The restricted form $\big(\mathcal{E},\mathcal{F}(\mathbb{S}\backslash\mathrm{V}_{0})\big)$
is also a regular local Dirichlet form on $L^{2}(\mathbb{S};\nu)$
corresponding to Dirichlet boundary conditions.

By replacing $\mathrm{V}_{m}$ with $\hat{\mathrm{V}}_{m}$ in \eqref{eq:-63},
$\hat{\mathcal{E}}(u)$ can be defined similarly for any $u\in C(\hat{\mathbb{S}})$.
Let $\mathcal{F}(\hat{\mathbb{S}})$ be the completion of $\{u\in C(\hat{\mathbb{S}}):\hat{\mathcal{E}}(u)<\infty\}$
with respect to the norm $\hat{\mathcal{E}}(\cdot)^{1/2}+\Vert\cdot\Vert_{L^{2}(\hat{\nu})}$.
It can be shown that $\mathcal{F}(\hat{\mathbb{S}})\subseteq C_{0}(\hat{\mathbb{S}})$,
where $C_{0}(\hat{\mathbb{S}})$ is the space of continuous functions
on $\hat{\mathbb{S}}$ vanishing at infinity. $\big(\hat{\mathcal{E}},\mathcal{F}(\hat{\mathbb{S}})\big)$
is called the standard Dirichlet form on $\hat{\mathbb{S}}$, which
is a regular local Dirichlet form on $L^{2}(\hat{\mathbb{S}};\hat{\nu})$.
By definition
\begin{align}
\hat{\mathcal{E}}(u,v)=\sum_{i\in\mathbb{Z}}\mathcal{E}\big[(u\circ\tau_{i})\vert_{\mathbb{S}},(v\circ\tau_{i})\vert_{\mathbb{S}}\big],\;\;u,v\in\mathcal{F}(\hat{\mathbb{S}}).\label{eq:-21}
\end{align}
Similar to $\mathcal{E}$, the form $\hat{\mathcal{E}}$ is self-similar
in the sense that
\begin{equation}
\hat{\mathcal{E}}(u,v)=(5/3)\,\hat{\mathcal{E}}\big(u\circ\mathbf{F}_{1},v\circ\mathbf{F}_{1}\big),\;\;u,v\in\mathcal{F}(\hat{\mathbb{S}}).\label{eq:-62}
\end{equation}
For any $x,y\in\hat{\mathbb{S}}$, define $R(x,y)$ by 
\[
R(x,y)^{-1}=\inf\big\{\hat{\mathcal{E}}(u):u\in\mathcal{F}(\hat{\mathbb{S}}),\;u(x)=0,\;u(y)=1\big\}
\]
if $x\not=y$, and $R(x,y)=0$ if $x=y$. For every $x,y\in\hat{\mathbb{S}}$,
$R(x,y)<\infty$. Moreover, if $x\not=y$, then there exists a unique
$u\in\mathcal{F}(\hat{\mathbb{S}})$ such that $u(x)=1,\;u(y)=0,\;\hat{\mathcal{E}}(u)=R(x,y)^{-1}$
(see \citep[Theorem 2.3.4]{Ki01}). The function $R(\cdot,\cdot)$,
called the \emph{resistance metric}, is a metric on $\hat{\mathbb{S}}$
satisfying 
\[
C_{\ast}^{-1}\,|x-y|^{d_{w}-d_{f}}\le R(x,y)\le C_{\ast}\,|x-y|^{d_{w}-d_{f}},\;x,y\in\hat{\mathbb{S}}
\]
for some universal constant $C_{\ast}\ge1$, where $d_{s}=2\log3/\log5,\,d_{w}=\log5/\log2$,
and $d_{f}=d_{w}/(2d_{s})$ are the \emph{spectral dimension}, the
\emph{walk dimension}, and the \emph{fractal dimension} of $\hat{\mathbb{S}}$
respectively (cf. \citep[Lemma 3.3.5]{Ki01}). By the definition of
$R(\cdot,\cdot)$,
\begin{equation}
|u(x)-u(y)|\le R(x,y)^{1/2}\hat{\mathcal{E}}(u)^{1/2},\;\;u\in\mathcal{F}(\hat{\mathbb{S}}),\;x,y\in\hat{\mathbb{S}}.\label{eq:-55}
\end{equation}
Since $u\vert_{\mathbb{S}}\in\mathcal{F}(\mathbb{S}),\;u\in\mathcal{F}(\hat{\mathbb{S}})$
and $\max_{\mathbb{S}\times\mathbb{S}}R<\infty$, it follows from
\eqref{eq:-55} that
\begin{equation}
\mathop{\mathrm{osc}}_{\mathbb{S}}(u)\le C_{\ast}\,\mathcal{E}(u)^{1/2},\;\;u\in\mathcal{F}(\mathbb{S}).\label{eq:-14}
\end{equation}

Let $f$ be a function on $\mathrm{V}_{0}$. There exists a unique
$h\in\mathcal{F}(\mathbb{S})$ such that $h\vert_{{\scriptscriptstyle \mathrm{V}_{0}}}=f$
and $\mathcal{E}(h)=\inf\{\mathcal{E}(u):u\in\mathcal{F}(\mathbb{S}),\;u\vert_{{\scriptscriptstyle \mathrm{V}_{0}}}=f\}$
(see \citep[Proposition 3.2.1 ]{Ki01}). $h$ is called the \emph{harmonic
function} in $\mathbb{S}$ with boundary value $h\vert_{{\scriptscriptstyle \mathrm{V}_{0}}}=f$.
A harmonic function $h$, restricted on each $\mathrm{V}_{m},\;m\in\mathbb{N}$,
can be evaluated  by
\begin{equation}
(h\circ\mathbf{F}_{[\omega]_{m}})|_{{\scriptscriptstyle \mathrm{V}_{0}}}=\mathbf{A}_{[\omega]_{m}}(h\vert_{{\scriptscriptstyle \mathrm{V}_{0}}}),\;\;\omega=\omega_{1}\omega_{2}\omega_{3}\dots\,\in\mathrm{W}_{\ast},\label{eq:-30}
\end{equation}
(cf. \citep[Proposition 3.2.1 ]{Ki01}), where $\mathbf{A}_{[\omega]_{m}}=\mathbf{A}_{\omega_{m}}\cdots\mathbf{A}_{\omega_{2}}\mathbf{A}_{\omega_{1}}$,
with
\[
\mathbf{A}_{1}=\frac{1}{5}\left[\begin{array}{ccc}
5 & 0 & 0\\
2 & 2 & 1\\
2 & 1 & 2
\end{array}\right],\;\mathbf{A}_{2}=\frac{1}{5}\left[\begin{array}{ccc}
2 & 2 & 1\\
0 & 5 & 0\\
1 & 2 & 2
\end{array}\right],\;\mathbf{A}_{3}=\frac{1}{5}\left[\begin{array}{ccc}
2 & 1 & 2\\
1 & 2 & 2\\
0 & 0 & 5
\end{array}\right].
\]

Let $f$ be a function on $\mathrm{V}_{m}$. The \emph{$m$-harmonic
function} with boundary value $f$ is defined to be the unique $h\in\mathcal{F}(\mathbb{S})$
such that $h\vert_{{\scriptscriptstyle \mathrm{V}_{m}}}=f$ and that
$h\circ\mathbf{F}_{[\omega]_{m}}$ is a harmonic function for all
$\omega\in\mathrm{W}_{\ast}$. The energy of an $m$-harmonic function
$h$ can be calculated using $\mathcal{E}(h)=\mathcal{E}^{(m)}(h|_{{\scriptscriptstyle {\scriptscriptstyle \mathrm{V}_{m}}}}).$

\subsubsection*{Kusuoka measures and gradients}

Let $\mathbf{P}:\mathbb{R}^{3}\to\mathbb{R}^{3}$ be the projection
$\mathbf{P}x=x-(x_{1}+x_{2}+x_{3})/3$ for $x=(x_{1},x_{2},x_{3})\in\mathbb{R}^{3}$.
The \emph{Kusuoka measure $\mu$} on $\mathbb{S}$, as defined in
\citep{Ku89}, is the unique Borel probability measure on $\mathbb{S}$
such that
\[
\mu\big(\mathbb{S}_{[\omega]_{m}}\big)=2^{-1}\cdot\left(5/3\right)^{m}\,\mathrm{tr}\big(\mathbf{A}_{[\omega]_{m}}^{\mathrm{t}}\mathbf{P}\mathbf{A}_{[\omega]_{m}}\big)
\]
for all $\omega=\omega_{1}\omega_{2}\omega_{3}\dots\,\in\mathrm{W}_{\ast},\;m\in\mathbb{N}$.
The Kusuoka measure $\hat{\mu}$ on $\hat{\mathbb{S}}$ is the unique
Borel measure on $\hat{\mathbb{S}}$ such that $(\hat{\mu}\circ\tau_{i})|_{\mathbb{S}}=\mu$
for all $i\in\mathbb{Z}$.

The Kusuoka measure $\mu$ ($\hat{\mu}$ respectively) is singular
to the Hausdorff measure $\nu$ ($\hat{\nu}$ respectively) (cf. \citep[p. 678]{Ku89}).
If $u\in\mathcal{F}(\mathbb{S})$, then $\mu_{\langle u\rangle}$
denotes the \emph{energy measure} of $u$, i.e. the Borel measure
on $\mathbb{S}$ such that $\int_{\mathbb{S}}\phi\,d\mu_{\langle u\rangle}=\mathcal{E}(\phi u,u)-2^{-1}\mathcal{E}(\phi,u^{2})$
for $\phi\in\mathcal{F}(\mathbb{S})$. By \citep[Theorem (5.4)]{Ku89},
$\mu_{\langle u\rangle}\ll\mu$ for all $u\in\mathcal{F}(\mathbb{S})$
(see \citep{Hin08,Hin13} for similar results on general fractals).
Moreover, there exists a unique linear operator $\nabla:\mathcal{F}(\mathbb{S})\to L^{2}(\mu)$,
called the \emph{gradient} \emph{operator} on $\mathbb{S}$, satisfying
the following: (i) $\mu_{\langle u\rangle}=|\nabla u|^{2}\,\mu$ for
all $u\in\mathcal{F}(\mathbb{S})$, and (ii) if $h$ is the harmonic
function with boundary value  $h(0,0)=0,\,h(1,0)=h(1/2,\sqrt{3}/2)=1$,
then $\nabla h>0\;\;\mu$-a.e. The gradient operator $\nabla:\mathcal{F}(\hat{\mathbb{S}})\to L^{2}(\hat{\mu})$
on \emph{$\hat{\mathbb{S}}$} is defined by $[(\nabla u)\circ\tau_{i}]\vert_{\mathbb{S}}=\nabla[(u\circ\tau_{i})\vert_{\mathbb{S}}]$
for all $u\in\mathcal{F}(\hat{\mathbb{S}})$ and all $i\in\mathbb{Z}$.
\begin{rem}
\label{rem:-9}We should point out that there exist several slight
variants of gradients on fractals, which are introduced to address
different problems (see, e. g. \citep{Ki93,Tep00,Str00,CS03,Hin10,BK16,LQ16}
and references therein). The definition of gradients on $\mathbb{S}$
adopted in the present paper was introduced in \citep{LQ16} via martingale
representations, and can be regarded as the special case of the definition
given in \citep{Hin10}, where $\mu$ is the minimal energy-dominant
measure (see \citep[p. 3]{Hin10} for the definition).
\end{rem}

\section{\label{sec:}Sobolev inequalities}

The objective of this section is to establish some Sobolev inequalities
involving different (probably mutually singular) measures on $\mathbb{S}$
and $\hat{\mathbb{S}}$ (Theorem \ref{thm:-2} and Theorem \ref{thm:-4}
respectively), which is crucial to our study of some semi-linear parabolic
equations on the gasket. A sufficient and necessary condition for
the validity of these Sobolev inequalities (Theorem \ref{thm:-1}
and Theorem \ref{thm:-3}) will be established as well.

To shed some light on the motivation of these inequalities, consider
the following simple parabolic PDE on $\mathbb{S}$
\[
\partial_{t}u\,d\nu=\mathcal{L}u\,d\nu+\nabla u\,d\mu.
\]
Here the singular measures $\nu$ and $\mu$ must be involved as $\mathcal{L}u$
is $\nu$-a.e. defined while $\nabla u$ is only $\mu$-a.e. defined.
A precise interpretation of this equation will be given in Section
\ref{sec:-1}. Let us assume for the moment that if $u$ is a solution
then one may test the equation against the solution to obtain
\[
\frac{d}{dt}\Vert u(t)\Vert_{L^{2}(\nu)}^{2}=-\mathcal{E}(u(t))+\langle\nabla u(t),u(t)\rangle_{\mu},
\]
from which it follows that
\[
\frac{d}{dt}\Vert u(t)\Vert_{L^{2}(\nu)}^{2}\le-\frac{1}{2}\mathcal{E}(u(t))+\frac{1}{2}\Vert u(t)\Vert_{L^{2}(\mu)}^{2}.
\]
For PDEs on Euclidean spaces, the measures $\nu$ and $\mu$ are equal
to the Lebesgue measures, and therefore, the above differential inequality
together with Grönwall's inequality yields the energy estimates and
the existence and uniqueness of solutions. However, on $\mathbb{S}$,
the measures $\nu$ and $\mu$ are mutually singular, and hence the
$L^{2}$-norms $\Vert\cdot\Vert_{L^{2}(\nu)}$ and $\Vert\cdot\Vert_{L^{2}(\mu)}$
are in general incomparable. Thus, Grönwall's inequality does not
apply in this case. For PDEs involving gradients on $\mathbb{S}$,
an appropriate comparison of $\Vert\cdot\Vert_{L^{2}(\nu)}$ and $\Vert\cdot\Vert_{L^{2}(\mu)}$
is necessary to obtaining energy estimates. In fact, for functions
$u\in\mathcal{F}(\mathbb{S})$, the $L^{2}$-norms $\Vert u\Vert_{L^{2}(\nu)}$
and $\Vert u\Vert_{L^{2}(\mu)}$ must be compared with the involvement
of (an arbitrarily small portion of) the energy $\mathcal{E}(u)$
(see Corollary \ref{cor:} below). This type of comparison is possible
due to the Sobolev inequalities to be established in this section.

For convenience, $C_{\ast}$ will always denote a generic universal
constant which may be different on various occasions.
\begin{defn}
Let $S_{i,m}=2^{m}\tau_{i}\big(\mathbb{S}\big),\;m,i\in\mathbb{Z}$.
The \emph{energy of $u\in\mathcal{F}(\hat{\mathbb{S}})$ on $S_{i,m}$
}is defined to be\emph{ $\hat{\mathcal{E}}|_{S_{i,m}}(u)=(3/5)^{m}\,\mathcal{E}[(u\circ\tau_{i}\circ\mathbf{F}_{1}^{-m})|_{\mathbb{S}}].$}
\end{defn}
Clearly, $\hat{\mathbb{S}}$ can be written as the non-overlapping
union $\hat{\mathbb{S}}=\bigcup_{i\in\mathbb{Z}}S_{i,m}$ for each
$m\in\mathbb{Z}$. Therefore, $\hat{\mathcal{E}}(u)=\sum_{i\in\mathbb{Z}}\hat{\mathcal{E}}|_{S_{i,m}}(u)$
for any $u\in\mathcal{F}(\hat{\mathbb{S}})$ in view of \eqref{eq:-21}
and \eqref{eq:-62}.
\begin{defn}
The constant $\delta_{s}>0$ is defined by $1/\delta_{s}=2/d_{s}-1=\log5/\log3-1$.
\end{defn}
The constant $\delta_{s}$ is defined so that $5/3=3^{1/\delta_{s}}$.
Therefore, for every $i$ and $m$, by \eqref{eq:-14},
\[
\begin{aligned}\mathop{\mathrm{osc}}_{\mathbb{S}}\big( & u\circ\tau_{i}\circ\mathbf{F}_{1}^{-m}\big)\le C_{\ast}\,\mathcal{E}[(u\circ\tau_{i}\circ\mathbf{F}_{1}^{-m})|_{\mathbb{S}}]^{1/2}\\
 & =C_{\ast}\,(5/3)^{m/2}\,\hat{\mathcal{E}}|_{S_{i,m}}(u)^{1/2}=C_{\ast}\,\hat{\nu}\big(S_{i,m}\big)^{1/(2\delta_{s})}\hat{\mathcal{E}}|_{S_{i,m}}(u)^{1/2},
\end{aligned}
\]
which implies that
\begin{equation}
\mathop{\mathrm{osc}}_{S_{i,m}}(u)\le C\,\hat{\nu}\big(S_{i,m}\big)^{1/(2\delta_{s})}\hat{\mathcal{E}}|_{S_{i,m}}(u)^{1/2}.\label{eq:-22}
\end{equation}

\begin{defn}
A subset $S\subseteq\hat{\mathbb{S}}$ is called a \emph{dyadic triangle}
if $S=S_{i,m}$ for some $m,i\in\mathbb{Z}$.
\end{defn}
We are now in a position to formulate the main results of this section.
Let $\hat{\sigma}$ be a Borel measure on $\hat{\mathbb{S}}$ satisfying
the following condition: there exist constants $C_{\hat{\sigma}}\ge1$
and $0<\underbar{\ensuremath{\delta}}\le\bar{\delta}\le\infty,\,\bar{\delta}\ge1$
such that
\begin{equation}
\left\{ \begin{aligned}\hat{\sigma}(S)\le C_{\hat{\sigma}}\,\hat{\nu}(S)^{1/\bar{\delta}}, & \;\;\text{if}\;\ 0<\text{diam}(S)<1,\\
\hat{\sigma}(S)\le C_{\hat{\sigma}}\,\hat{\nu}(S)^{1/\underbar{\ensuremath{{\scriptstyle \delta}}}}, & \;\;\text{if}\;\;\text{diam}(S)\ge1,
\end{aligned}
\right.\tag{{\text{M.1}}}\label{eq:-47}
\end{equation}
for any dyadic triangle $S\subseteq\hat{\mathbb{S}}$, where $\text{diam}(A)$
denotes the diameter of $A\subseteq\hat{\mathbb{S}}$ with respect
to the Euclidean metric.
\begin{rem}
\label{rem:}(i) In literature, a Borel measure $\sigma$ on $\mathbb{R}^{n}$
is called an Ahlfors regular measure if there exists a $d>0$ such
that $C^{-1}r^{d}\le\sigma(B(x,r))\le Cr^{d}$ for any ball of radius
$r$ centred at $x\in\mathbb{R}^{n}$. Therefore, the Hausdorff measure
is an Ahlfors regular measure, and we think it is appropriate to call
the measure $\hat{\sigma}$ in \eqref{eq:-47} an Ahlfors upper regular
measure (with distinct exponents for expansion and contraction). In
the present paper, we formulate the condition \eqref{eq:-47} in terms
of the Hausdorff measure rather than diameter of sets because of notational
convenience when comparing measures. We would like to mention that
a heat kernel estimate implies the Ahlfors regularity of the Hausdorff
measure (see e.g. \citep[Theorem 3.2]{GHL03} and references therein).

\ (ii) The restriction $\bar{\delta}\ge1$ in the condition \eqref{eq:-47}
is necessary in view of the countable additivity of measures.

(iii) Notice that we do not require \eqref{eq:-47} to hold for general
Borel sets. In fact, \eqref{eq:-47} being valid for all Borel sets
implies the absolute continuity of $\hat{\sigma}$ with respect to
$\hat{\nu}$.
\end{rem}
We would like to point out that the condition \eqref{eq:-47} is general
enough to include many cases of interests, some of important examples
are listed below.
\begin{example}
\label{exmp:}(i) Dirac measures, for which the condition \eqref{eq:-47}
is satisfied with $\underbar{\ensuremath{\delta}}=\bar{\delta}=\infty$.

\ (ii) The Kusuoka measure $\hat{\mu}$ (cf. Corollary \ref{cor:-1}.(b)
below).

(iii) Analogues on $\hat{\mathbb{S}}$ of weighted measures $|x|^{-\theta}\,dx$
on $\mathbb{R}^{d}$ with $0\le\theta<d$. For any cube $Q\subseteq\mathbb{R}^{d}$,
we have $\int_{Q}|x|^{-\theta}dx\le C\,|Q|^{1-\theta/d}$ for some
constant $C>0$ depending only on $d$. Therefore, the analogue on
$\hat{\mathbb{S}}$ of $|x|^{-\theta}\,dx$ on $\mathbb{R}^{d}$ would
be a Borel measure $\hat{\sigma}\ll\hat{\nu}$ satisfying the condition
\eqref{eq:-47} with $\underbar{\ensuremath{\delta}},\bar{\delta}$
given by $1/\underbar{\ensuremath{\delta}}=1/\bar{\delta}=1-\theta/d_{s}$.
Here we have used $d_{s}$ as the Sobolev dimension of $\hat{\mathbb{S}}$
(cf. Remark \ref{rem:-1} below).
\end{example}
\begin{thm}
\label{thm:-2}Let $1\le p\le q\le\infty,\;q\ge2$. Suppose $\hat{\sigma}$
is a Borel measure on $\hat{\mathbb{S}}$ satisfying the condition
\eqref{eq:-47}. Then
\begin{equation}
\Vert u\Vert_{L^{q}(\hat{\sigma})}\le C\,\sum_{i=1,2}\hat{\mathcal{E}}(u)^{a_{i}/2}\Vert u\Vert_{L^{p}(\hat{\nu})}^{1-a_{i}},\;\;u\in\mathcal{F}(\hat{\mathbb{S}}),\label{eq:-43}
\end{equation}
where 
\begin{equation}
a_{1}=\Big[\frac{1/p-1/(q\bar{\delta})}{1/p+1/(2\delta_{s})}\Big]^{+},\;a_{2}=\Big[\frac{1/p-1/(q\underbar{\ensuremath{\delta}})}{1/p+1/(2\delta_{s})}\Big]^{+},\label{eq:-6}
\end{equation}
and $C>0$ is a constant depending only on the constant $C_{\hat{\sigma}}$
in \eqref{eq:-47}.

Moreover, if there exists a sequence $\{S_{m}\}_{m\in\mathbb{Z}}$
of dyadic triangles such that 
\[
\lim_{m\to-\infty}\mathrm{diam}(S_{m})=0\textrm{, }\;\lim_{m\to\infty}\mathrm{diam}(S_{m})=\infty
\]
and
\begin{equation}
1/\bar{\delta}=\lim_{m\to-\infty}\frac{\log\hat{\sigma}(S_{m})}{\log\hat{\nu}(S_{m})},\;1/\underbar{\ensuremath{\delta}}=\lim_{m\to\infty}\frac{\log\hat{\sigma}(S_{m})}{\log\hat{\nu}(S_{m})},\tag{{\text{M.2}}}\label{eq:-38}
\end{equation}
then the pair of exponents given by \eqref{eq:-6} is optimal in the
following sense: if
\begin{equation}
\Vert u\Vert_{L^{q}(\hat{\sigma})}\le C\,\sum_{i=1}^{N}\hat{\mathcal{E}}(u)^{b_{i}/2}\Vert u\Vert_{L^{p}(\hat{\nu})}^{1-b_{i}},\;\;u\in\mathcal{F}(\hat{\mathbb{S}}),\label{eq:-10}
\end{equation}
for some constants $b_{i}\in[0,1],\;1\le i\le N,\;N\in\mathbb{N}_{+}$
and $C>0$ independent of $u$, then $\min_{i}b_{i}\le a_{2}\le a_{1}\le\max_{i}b_{i}$.
\end{thm}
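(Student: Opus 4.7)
My plan is to prove \eqref{eq:-43} by aggregating a pointwise bound on each dyadic triangle $S=S_{i,m}$. Combining the oscillation bound \eqref{eq:-22} with the elementary inequality $\inf_{S}|u|\le \hat{\nu}(S)^{-1/p}\Vert u\Vert_{L^{p}(\hat{\nu},S)}$ (from averaging) yields
\[
\Vert u\Vert_{L^{\infty}(S)}\le \hat{\nu}(S)^{-1/p}\Vert u\Vert_{L^{p}(\hat{\nu},S)}+C_{\ast}\hat{\nu}(S)^{1/(2\delta_{s})}\hat{\mathcal{E}}|_{S}(u)^{1/2}.
\]
Multiplying by $\hat{\sigma}(S)^{1/q}$ and invoking \eqref{eq:-47} then gives the local estimate
\[
\Vert u\Vert_{L^{q}(\hat{\sigma},S)}\le C\,\hat{\nu}(S)^{\tfrac{1}{q\delta(S)}-\tfrac{1}{p}}\Vert u\Vert_{L^{p}(\hat{\nu},S)}+C\,\hat{\nu}(S)^{\tfrac{1}{q\delta(S)}+\tfrac{1}{2\delta_{s}}}\hat{\mathcal{E}}|_{S}(u)^{1/2},
\]
where $\delta(S)=\bar{\delta}$ if $\mathrm{diam}(S)<1$ and $\delta(S)=\underbar{\ensuremath{\delta}}$ otherwise.

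\textbf{Aggregation and choice of scale.} Fix $m\in\mathbb{Z}$ and use the disjoint partition $\hat{\mathbb{S}}=\bigsqcup_{i\in\mathbb{Z}}S_{i,m}$, in which each triangle has $\hat{\nu}$-measure $V_{m}=3^{m}$. The hypotheses $q\ge p$ and $q\ge 2$ allow the monotone embeddings $\ell^{p}\hookrightarrow\ell^{q}$ and $\ell^{1}\hookrightarrow\ell^{q/2}$, producing
\[
\sum_{i}\Vert u\Vert_{L^{p}(\hat{\nu},S_{i,m})}^{q}\le\Vert u\Vert_{L^{p}(\hat{\nu})}^{q},\qquad \sum_{i}\hat{\mathcal{E}}|_{S_{i,m}}(u)^{q/2}\le\hat{\mathcal{E}}(u)^{q/2}.
\]
Raising the local estimate to the $q$th power, summing over $i$, and extracting the $q$th root gives a scale-dependent two-term bound $\Vert u\Vert_{L^{q}(\hat{\sigma})}\le C\,V_{m}^{\frac{1}{q\delta_{m}}-\frac{1}{p}}\Vert u\Vert_{L^{p}(\hat{\nu})}+C\,V_{m}^{\frac{1}{q\delta_{m}}+\frac{1}{2\delta_{s}}}\hat{\mathcal{E}}(u)^{1/2}$, with $\delta_{m}=\bar{\delta}$ for $m<0$ and $\delta_{m}=\underbar{\ensuremath{\delta}}$ for $m\ge 0$. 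Balancing the two terms picks out the common target $V_{m}^{\ast}=[\Vert u\Vert_{L^{p}(\hat{\nu})}/\hat{\mathcal{E}}(u)^{1/2}]^{1/(1/p+1/(2\delta_{s}))}$, independent of $\delta_{m}$. If $V_{m}^{\ast}<1$ I pick the nearest $m<0$; the small-scale regime then produces $\Vert u\Vert_{L^{q}(\hat{\sigma})}\le C\hat{\mathcal{E}}(u)^{a_{1}/2}\Vert u\Vert_{L^{p}(\hat{\nu})}^{1-a_{1}}$. Otherwise the nearest $m\ge 0$ and the large-scale regime produce the analogous bound with $a_{2}$. Edge cases in which the raw exponent would be negative correspond to $a_{i}=0$: in those regimes the $V_{m}^{\ast}$-constraint forces $\Vert u\Vert_{L^{p}(\hat{\nu})}$ to dominate $\hat{\mathcal{E}}(u)^{1/2}$, so the bound collapses to $C\Vert u\Vert_{L^{p}(\hat{\nu})}$, matching $a_{i}=0$. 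In every case one of the two summands of \eqref{eq:-43} already dominates $\Vert u\Vert_{L^{q}(\hat{\sigma})}$, so their sum does as well.

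\textbf{Optimality and main obstacle.} For the second assertion I would test \eqref{eq:-10} against a family $u_{m}\in\mathcal{F}(\hat{\mathbb{S}})$ of bump functions adapted to the triangles $S_{m}$ furnished by \eqref{eq:-38}, designed so that $u_{m}\sim 1$ on a sub-triangle of $S_{m}$ of comparable $\hat{\nu}$- and $\hat{\sigma}$-measure and saturates \eqref{eq:-22}; a normalised piecewise-harmonic bump adapted to $S_{m}$ is a natural choice. Such $u_{m}$ yield
\[
\Vert u_{m}\Vert_{L^{p}(\hat{\nu})}\sim\hat{\nu}(S_{m})^{1/p},\ \hat{\mathcal{E}}(u_{m})^{1/2}\sim\hat{\nu}(S_{m})^{-1/(2\delta_{s})},\ \Vert u_{m}\Vert_{L^{q}(\hat{\sigma})}\sim\hat{\sigma}(S_{m})^{1/q},
\]
the last asymptotic using \eqref{eq:-38}. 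Inserting into \eqref{eq:-10}, taking logarithms, and dividing by $\log\hat{\nu}(S_{m})$ (which is negative as $m\to-\infty$ and positive as $m\to+\infty$), only the dominant summand among the $N$ terms on the right survives in each limit. Passing to the limits and using \eqref{eq:-38} to convert the ratio $\log\hat{\sigma}(S_{m})/\log\hat{\nu}(S_{m})$ into $1/\bar{\delta}$ or $1/\underbar{\ensuremath{\delta}}$ yields $\max_{i}b_{i}\ge a_{1}$ and $\min_{i}b_{i}\le a_{2}$; combined with $a_{2}\le a_{1}$ (from $\underbar{\ensuremath{\delta}}\le\bar{\delta}$), this is precisely the optimality claim. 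The most delicate step is the construction of $u_{m}$: all three norms must simultaneously match their scaling prediction up to multiplicative constants, and the matching lower bound on $\Vert u_{m}\Vert_{L^{q}(\hat{\sigma})}$ requires $u_{m}$ to carry a positive proportion of $\hat{\sigma}(S_{m})$---a property that has to be argued from \eqref{eq:-38} together with the self-similar geometry of dyadic triangles.
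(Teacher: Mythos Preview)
Your proposal is correct and follows essentially the same route as the paper: a local oscillation-plus-average estimate on each $S_{i,m}$, aggregation via the $\ell^{p}\hookrightarrow\ell^{q}$ and $\ell^{1}\hookrightarrow\ell^{q/2}$ embeddings (this is exactly where $q\ge p$ and $q\ge2$ enter), and optimization over the scale $m$; for optimality, the paper likewise tests against piecewise-harmonic bumps on the $S_{m}$. The only point worth noting is that the ``delicate step'' you flag is in fact routine: the paper builds $h_{S}$ explicitly as a rescaled $1$-harmonic tent supported on $\tilde{S}$ with $h_{S}\asymp 1$ on $S$, so the lower bound $\Vert h_{S}\Vert_{L^{q}(\hat{\sigma})}\gtrsim\hat{\sigma}(S)^{1/q}$ is immediate from $h_{S}\ge C_{\ast}^{-1}$ on $S$, and \eqref{eq:-38} is only used afterward (in the log-ratio sense, not multiplicatively) to convert $\hat{\sigma}(S_{m})$ into a power of $\hat{\nu}(S_{m})$.
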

\begin{proof}
Suppose first that $p\le q<\infty$. Let $\hat{\nu}_{m}=\hat{\nu}(2^{m}\mathbb{S})=3^{m}$,
$S_{i,m}=2^{m}\,\tau_{i}(\mathbb{S})$ for any $m,\,i\in\mathbb{Z}$.
Then $\hat{\mathbb{S}}=\bigcup_{i}S_{i,m}$. When $m\ge0$, by \eqref{eq:-22},
we have that
\[
\begin{aligned}\int_{\hat{\mathbb{S}}}|u|^{q}d\hat{\sigma} & \le2^{q-1}\sum_{i}\Big[\int_{S_{i,m}}\Big|u-\frac{1}{\hat{\nu}_{m}}\int_{S_{i,m}}u\,d\hat{\nu}\Big|^{q}\,d\hat{\sigma}+\Big|\frac{1}{\hat{\nu}_{m}}\int_{S_{i,m}}u\,d\hat{\nu}\Big|^{q}\,\hat{\sigma}(S_{i,m})\Big]\\
 & \le C^{q}\sum_{i}\Big[\hat{\nu}_{m}^{q/(2\delta_{s})}\hat{\mathcal{E}}|_{S_{i,m}}(u)^{q/2}\,\hat{\sigma}(S_{i,m})+\frac{1}{\hat{\nu}_{m}^{q/p}}\Big[\int_{S_{i,m}}|u|^{p}\,d\hat{\nu}\Big]^{q/p}\,\hat{\sigma}(S_{i,m})\Big]\\
 & \le C^{q}\sum_{i}\Big[\hat{\nu}_{m}^{q/(2\delta_{s})+1/\underbar{\ensuremath{{\scriptstyle \delta}}}}\,\hat{\mathcal{E}}|_{S_{i,m}}(u)^{q/2}+\hat{\nu}_{m}^{1/\underbar{\ensuremath{{\scriptstyle \delta}}}-q/p}\,\Big(\int_{S_{i,m}}|u|^{p}\,d\hat{\nu}\Big)^{q/p}\Big]\\
 & \le C^{q}\Big\{\hat{\nu}_{m}^{q/(2\delta_{s})+1/\underbar{\ensuremath{{\scriptstyle \delta}}}}\,\Big[\sum_{i}\hat{\mathcal{E}}|_{S_{i,m}}(u)\Big]^{q/2}+C^{q}\,\hat{\nu}_{m}^{1/\underbar{\ensuremath{{\scriptstyle \delta}}}-q/p}\,\Big[\sum_{i}\int_{S_{i,m}}|u|^{p}\,d\hat{\nu}\Big]^{q/p}\Big\}\\
 & =C^{q}\,\big[\hat{\nu}_{m}^{q/(2\delta_{s})+1/\underbar{\ensuremath{{\scriptstyle \delta}}}}\,\hat{\mathcal{E}}(u)^{q/2}+\hat{\nu}_{m}^{1/\underbar{\ensuremath{{\scriptstyle \delta}}}-q/p}\,\Vert u\Vert_{L^{p}(\hat{\nu})}^{q}\big],
\end{aligned}
\]
where and hereafter $C>0$ denotes a generic constant depending only
on the constant $C_{\hat{\sigma}}$ in \eqref{eq:-47}. Therefore,
\begin{equation}
\Vert u\Vert_{L^{q}(\hat{\sigma})}\le C\,\big[\hat{\nu}_{m}^{1/(2\delta_{s})+1/(q\underbar{\ensuremath{{\scriptstyle \delta}}})}\,\hat{\mathcal{E}}(u)^{1/2}+\hat{\nu}_{m}^{1/(q\underbar{\ensuremath{{\scriptstyle \delta}}})-1/p}\,\Vert u\Vert_{L^{p}(\hat{\nu})}\big].\label{eq:-71}
\end{equation}
Similarly, when $m\le0$, we have that
\begin{equation}
\Vert u\Vert_{L^{q}(\hat{\sigma})}\le C\,\big[\hat{\nu}_{m}^{1/(2\delta_{s})+1/(q\bar{\delta})}\,\hat{\mathcal{E}}(u)^{1/2}+\hat{\nu}_{m}^{1/(q\bar{\delta})-1/p}\,\Vert u\Vert_{L^{p}(\hat{\nu})}\big].\label{eq:-66}
\end{equation}

\medskip{}

The proof of \eqref{eq:-43} is done by optimising over $m$. Suppose
$\hat{\mathcal{E}}(u)^{1/2}\ge\Vert u\Vert_{L^{p}(\hat{\nu})}$. Consider
the following two cases:

\medskip{}

\noindent \emph{Case 1:} $p\le q\le p/\bar{\delta}$.\enskip{}Note
that $p\le p/\bar{\delta}$ forces $\bar{\delta}=1$ and therefore
$1/(q\bar{\delta})=1/p,\;a_{1}=0$. Setting $m\to-\infty$ in \eqref{eq:-66}
gives that
\[
\Vert u\Vert_{L^{q}(\hat{\sigma})}\le C\,\hat{\mathcal{E}}(u)^{a_{1}/2}\Vert u\Vert_{L^{p}(\hat{\nu})}^{1-a_{1}}.
\]

\noindent \emph{Case 2:} $q>p/\bar{\delta}$.\enskip{}Setting
\[
m=\sup\big\{ m\le0:\hat{\nu}_{m}^{1/(2\delta_{s})+1/p}\le\hat{\mathcal{E}}(u)^{-1/2}\Vert u\Vert_{L^{p}(\hat{\nu})}\le1\big\}<\infty,
\]
in \eqref{eq:-66}, we obtain that
\[
\Vert u\Vert_{L^{q}(\hat{\sigma})}\le C\,\hat{\mathcal{E}}(u)^{a_{1}/2}\Vert u\Vert_{L^{p}(\hat{\nu})}^{1-a_{1}},\;\text{where}\ a_{1}=\Big[\frac{1/p-1/(q\bar{\delta})}{1/p+1/(2\delta_{s})}\Big]^{+}.
\]

Suppose that $\hat{\mathcal{E}}(u)^{1/2}<\Vert u\Vert_{L^{p}(\hat{\nu})}$.
We consider the two cases.

\medskip{}

\noindent \emph{Case 1:} $p\le q\le p/\underbar{\ensuremath{\delta}}$.\enskip{}In
this case, $a_{2}=0$. Setting $m=0$ in \eqref{eq:-71} gives that
\[
\Vert u\Vert_{L^{q}(\hat{\mu})}\le C\,\hat{\mathcal{E}}(u)^{a_{2}/2}\Vert u\Vert_{L^{p}(\hat{\nu})}^{1-a_{2}}.
\]

\noindent \emph{Case 2:} $q>p/\underbar{\ensuremath{\delta}}$.\enskip{}Setting
\[
m=\inf\big\{ m\ge0:\hat{\nu}_{m}^{1/(2\delta_{s})+1/p}\ge\hat{\mathcal{E}}(u)^{-1/2}\Vert u\Vert_{L^{p}(\hat{\nu})}>1\big\}<\infty,
\]
in \eqref{eq:-71}, we obtain that
\[
\Vert u\Vert_{L^{q}(\hat{\sigma})}\le C\,\hat{\mathcal{E}}(u)^{a_{2}/2}\Vert u\Vert_{L^{p}(\hat{\nu})}^{1-a_{2}},\;\text{where}\ a_{2}=\Big[\frac{1/p-1/(q\underbar{\ensuremath{\delta}})}{1/p+1/(2\delta_{s})}\Big]^{+}.
\]
This proves \eqref{eq:-43} for $q<\infty$. Setting $q\to\infty$
proves the case when $q=\infty$ as the constant $C$ is independent
of $q$.

\medskip{}

Suppose in addition that the condition \eqref{eq:-38} is satisfied,
we prove that $(a_{1},a_{2})$ is the optimal pair of exponents. We
first show that, for any dyadic triangle $S\subseteq\hat{\mathbb{S}}$,
there exists an $h_{S}\in\mathcal{F}(\hat{\mathbb{S}})$ such that
\begin{equation}
C_{\ast}^{-1}\le h_{S}\le C_{\ast}\ \text{on}\ S,\;\text{supp}(h_{S})\subseteq\tilde{S},\;\text{and}\ \hat{\mathcal{E}}(h_{S})\le C_{\ast}\,\hat{\nu}(S)^{-1/\delta_{s}},\label{eq:-36}
\end{equation}
where $\tilde{S}=\{x\in\hat{\mathbb{S}}:\text{dist}(x,S)\le\text{diam}(S)\}$.

To see this, suppose first that $S=2^{-1}\mathbb{S}$ for some $m\in\mathbb{Z}$.
Let $h$ be the $1$-harmonic function in $\mathbb{S}$ with boundary
value
\[
h\big|_{\mathrm{V}_{1}}(x)=\left\{ \begin{aligned}1, & \;\;\text{if}\;x=(0,0),\\
0, & \;\;\text{otherwise}.
\end{aligned}
\right.
\]
Let $h(x)=h(-x)$ for $x\in-\mathbb{S}$, and $h(x)=0$ for $x\in\hat{\mathbb{S}}\backslash[\mathbb{S}\cup(-\mathbb{S})]$.
Then $h\in\mathcal{F}(\hat{\mathbb{S}})$ and satisfies \eqref{eq:-36}.
For a general dyadic triangle $S=2^{m}\tau_{i}(\mathbb{S}),\;i,m\in\mathbb{Z}$,
let $h_{S}=h\circ\tau_{i}^{-1}\circ\mathbf{F}_{1}^{m}$. Then $h_{S}\in\mathcal{F}(\hat{\mathbb{S}})$,
and the property \eqref{eq:-36} follows from \eqref{eq:-21} and
the self-similar property \eqref{eq:-62}.

Suppose that \eqref{eq:-10} holds. Let $\{S_{m}\}_{m\in\mathbb{Z}}$
be the sequence of dyadic triangles in \eqref{eq:-38}. For each $m\in\mathbb{Z}$,
by the above, there exists an $h_{m}\in\mathcal{F}(\hat{\mathbb{S}})$
such that $h_{m}\sim1$ on $S_{m}$, $\text{supp}(h_{m})\subseteq\tilde{S}_{m}$
and $\hat{\mathcal{E}}(h_{m})\lesssim\hat{\nu}(S_{m})^{-1/\delta_{s}}$,
where the notation $A\lesssim B$ means that $A\le cB$ for some constant
$c>0$ independent of $m$, and $A\simeq B$ means that $A\lesssim B$
and $B\lesssim A$. In view of \eqref{eq:-38}, it is easily seen
that
\[
\Vert h_{m}\Vert_{L^{q}(\hat{\sigma})}\simeq\hat{\nu}(S_{m})^{1/(q\underbar{\ensuremath{{\scriptstyle \delta}}})},\;\Vert h_{m}\Vert_{L^{p}(\hat{\nu})}\simeq\hat{\nu}(S_{m})^{1/p},\;m\to\infty,
\]
and
\[
\Vert h_{m}\Vert_{L^{q}(\hat{\sigma})}\simeq\hat{\nu}(S_{m})^{1/(q\bar{\delta})},\;\Vert h_{m}\Vert_{L^{p}(\hat{\nu})}\simeq\hat{\nu}(S_{m})^{1/p},\;m\to-\infty.
\]
It follows from the above and \eqref{eq:-10} that
\[
\hat{\nu}(S_{m})^{1/(q\underbar{\ensuremath{{\scriptstyle \delta}}})}\lesssim\sum_{i=1}^{N}\hat{\nu}(S_{m})^{-b_{i}/(2\delta_{s})+(1-b_{i})/p},\;\;m\to\infty,
\]
and
\[
\hat{\nu}(S_{m})^{1/(q\bar{\delta})}\lesssim\sum_{i=1}^{N}\hat{\nu}(S_{m})^{-b_{i}/(2\delta_{s})+(1-b_{i})/p},\;\;m\to-\infty,
\]
These inequalities imply that $\min_{i}b_{i}\le a_{2}\le a_{1}\le\max_{i}b_{i}$.
\end{proof}
\begin{rem}
\label{rem:-1}(i) Some comments are desired on the interpretation
of the exponents appearing in the inequality \eqref{eq:-43}. Recall
that, on Euclidean space $\mathbb{R}^{d}$, the celebrated Gagliardo\textendash Nirenberg
inequality takes the form
\[
\Vert D^{j}u\Vert_{L^{q}(\mathbb{R}^{d})}\le C\Vert D^{m}u\Vert_{L^{r}(\mathbb{R}^{d})}^{a}\Vert u\Vert_{L^{p}(\mathbb{R}^{d})}^{1-a}
\]
where $a\in[0,1]$ is given by $\frac{1}{q}=\frac{j}{d}+\big(\frac{1}{r}-\frac{m}{d}\big)a+\frac{1-a}{p}$.
The case corresponding to setting of Dirichlet forms is the one when
$j=0,\;m=1$ and $r=2$, for which the exponent $a$ is given by
\begin{equation}
a=\frac{1/p-1/q}{1/p-1/2+1/d}.\label{eq:-29}
\end{equation}
Some insights are gained by comparing \eqref{eq:-6} and \eqref{eq:-29}:

a) The exponents $a_{i},\,i=1,2$ in \eqref{eq:-43} are determined
by the harmonic structure on $\hat{\mathbb{S}}$ (or equivalently
the Dirichlet form $\hat{\mathcal{E}}$), the configuration parameters
$\underbar{\ensuremath{\delta}}$ and $\bar{\delta}$ of the measure
$\hat{\sigma}$, and the embedding parameters $p$ and $q$.

b) The effective Sobolev dimension $d$ of $\hat{\mathbb{S}}$, if
exists, should depend only on the harmonic structure. This dependence
is expressed in \eqref{eq:-6} as the denominator $1/p+1/(2\delta_{s})$.
Comparing this to the denominator of \eqref{eq:-29}, we see that
the Sobolev dimension $d$ should be given by $1/p-1/2+1/d=1/p+1/(2\delta_{s})$,
i.e. $d=d_{s}$. This suggests the identification of the spectral
dimension $d_{s}$ as the effective Sobolev dimension of $\hat{\mathbb{S}}$.
See \citep[pp. 44--45]{Str03} for more comments on $d_{s}$.

\ (ii) The inequality \eqref{eq:-10} includes the analogue on $\hat{\mathbb{S}}$
of a specific case of the weighted Sobolev inequalities on $\mathbb{R}^{d}$
in \citep{CKN84}. The weighted Sobolev inequalities established in
\citep{CKN84} take the form 
\[
\Vert|x|^{\gamma}u\Vert_{L^{q}(\mathbb{R}^{d})}\le C\Vert|x|^{\alpha}Du\Vert_{L^{r}(\mathbb{R}^{d})}^{a}\Vert|x|^{\beta}u\Vert_{L^{p}(\mathbb{R}^{d})}^{1-a}
\]
where $\alpha,\beta,\gamma<0$ satisfy $1/r+\alpha/d>0$, $1/p+\beta/d\ge1/q+\gamma/d>0$
and $\frac{1}{q}+\frac{\gamma}{d}=a\big(\frac{1}{r}+\frac{\alpha-1}{d}\big)+(1-a)\big(\frac{1}{p}+\frac{\beta}{d}\big)$.
The case corresponding to setting of Dirichlet forms is the one when
$\alpha=\beta=0,\,r=2$ and $1/p\ge1/q+\gamma/d_{s}>0$, for which
the weighted inequality reads
\begin{equation}
\Vert u\Vert_{L^{q}(|x|^{\gamma q}dx)}\le C\,\Vert Du\Vert_{L^{2}(dx)}^{a}\,\Vert u\Vert_{L^{p}(dx)}^{1-a}.\label{eq:-45}
\end{equation}
As remarked in Example \ref{exmp:}.(iii), the analogue on $\hat{\mathbb{S}}$
of $|x|^{\gamma q}\,dx$ on $\mathbb{R}^{d}$ is a Borel measure $\hat{\sigma}$
on $\hat{\mathbb{S}}$ satisfying the condition \eqref{eq:-47} with
$\underbar{\ensuremath{\delta}},\bar{\delta}$ given by $1/\underbar{\ensuremath{\delta}}=1/\bar{\delta}=1+\gamma q/d_{s}$.
Therefore, the analogue of \eqref{eq:-45} on $\hat{\mathbb{S}}$
should be $\Vert u\Vert_{L^{q}(\hat{\sigma})}\le C\,\hat{\mathcal{E}}(u)^{a/2}\Vert u\Vert_{L^{p}(\hat{\nu})}^{1-a}$
with $a$ given by $\frac{1}{q}+\frac{\gamma}{d_{s}}=a\big(\frac{1}{2}-\frac{1}{d_{s}}\big)+\frac{1-a}{p}$.
This coincides with the result of \eqref{eq:-26} since the exponents
for the measure $\hat{\sigma}$ are given by $a_{1}=a_{2}=\frac{1/p-1/q-\gamma/d_{s}}{1/p+1/d_{s}-1/2}=a$.

(iii) An additive version of \eqref{eq:-43}, which is a corollary
of \eqref{eq:-43} and Young's inequality, is derived in \citep{HR15}
for the study of vector fields on resistance spaces.
\end{rem}
\medskip{}

According to Theorem \ref{thm:-2}, the condition \eqref{eq:-47}
is sufficient for the derivation of Sobolev inequalities. The following
theorem states that this condition is also necessary for the validity
of Sobolev inequalities of the form \eqref{eq:-10} with $q<\infty$.
\begin{thm}
\label{thm:-1}Let $\hat{\sigma}$ be a Borel measure on $\hat{\mathbb{S}}$.
Suppose that there exist some constants $p,q\in(0,\infty)$, $b_{i}\in[0,1]$
where $1\le i\le N$, and $C>0$, such that \eqref{eq:-10} holds
for all $u\in\mathcal{F}(\hat{\mathbb{S}})$. Then there exist constants
$0<\underbar{\ensuremath{\delta}}\le\bar{\delta}\le\infty$ such that
the condition \eqref{eq:-47} is satisfied.
\end{thm}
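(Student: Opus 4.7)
The plan is to test the hypothesis \eqref{eq:-10} on the one-parameter family of cutoff functions $\{h_{S}\}$ already constructed inside the proof of Theorem~\ref{thm:-2}, indexed by the dyadic triangles $S\subseteq\hat{\mathbb{S}}$. Each $h_{S}\in\mathcal{F}(\hat{\mathbb{S}})$ satisfies $C_{\ast}^{-1}\le h_{S}\le C_{\ast}$ on $S$, $\mathrm{supp}(h_{S})\subseteq\tilde{S}$ (so $\hat{\nu}(\mathrm{supp}\,h_{S})\le C\hat{\nu}(S)$), and $\hat{\mathcal{E}}(h_{S})\le C\hat{\nu}(S)^{-1/\delta_{s}}$. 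Substituting $u=h_{S}$ into \eqref{eq:-10}, bounding $\|h_{S}\|_{L^{q}(\hat{\sigma})}\ge C^{-1}\hat{\sigma}(S)^{1/q}$, $\|h_{S}\|_{L^{p}(\hat{\nu})}\le C\hat{\nu}(S)^{1/p}$, and $\hat{\mathcal{E}}(h_{S})^{1/2}\le C\hat{\nu}(S)^{-1/(2\delta_{s})}$, and raising to the $q$-th power yields
\[
\hat{\sigma}(S)\le C\Big[\sum_{i=1}^{N}\hat{\nu}(S)^{e_{i}}\Big]^{q},\qquad e_{i}:=\frac{1-b_{i}}{p}-\frac{b_{i}}{2\delta_{s}},
\]
valid for every dyadic triangle $S$.

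The right-hand side is a finite sum of powers of $\hat{\nu}(S)$, whose dominant term depends only on whether $\hat{\nu}(S)\le 1$ or $\hat{\nu}(S)\ge 1$. Writing $\alpha:=\min_{i}e_{i}$, $\beta:=\max_{i}e_{i}$, and using $\hat{\nu}(S_{i,m})=3^{m}$ and $\mathrm{diam}(S_{i,m})\simeq 2^{m}$ so that $\mathrm{diam}(S)<1\Leftrightarrow\hat{\nu}(S)<1$, the above specialises to
\[
\hat{\sigma}(S)\le C'\hat{\nu}(S)^{q\alpha}\text{ if }\mathrm{diam}(S)<1,\qquad\hat{\sigma}(S)\le C'\hat{\nu}(S)^{q\beta}\text{ if }\mathrm{diam}(S)\ge 1.
\]
I would then set $1/\bar{\delta}:=\min(1,q\alpha)$ if $\alpha>0$ and $\bar{\delta}:=\infty$ otherwise, and $1/\underbar{\ensuremath{\delta}}:=q\beta$ if $\beta>0$ and $\underbar{\ensuremath{\delta}}:=\infty$ otherwise; then $\bar{\delta}\ge 1$ and $\underbar{\ensuremath{\delta}}>0$ hold by construction, and $\underbar{\ensuremath{\delta}}\le\bar{\delta}$ follows from $\alpha\le\beta$.

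The remaining work is to handle the boundary sign cases honestly. When $\beta<0$, the large-$S$ bound forces $\hat{\sigma}(S)\to 0$ as $\hat{\nu}(S)\to\infty$, so by continuity from below $\hat{\sigma}\equiv 0$ and \eqref{eq:-47} is vacuous. When $\alpha\le 0$ the cutoff test is uninformative on small $S$, but any such $S$ sits inside a unit dyadic triangle $T$ for which the same test already gives $\hat{\sigma}(T)\le C$, and by monotonicity $\hat{\sigma}(S)\le C=C\,\hat{\nu}(S)^{1/\bar{\delta}}$ with $\bar{\delta}=\infty$. I expect the main obstacle to be precisely this book-keeping across the sign combinations of $\alpha$ and $\beta$, rather than the scaling computation itself, which is routine once the cutoff $h_{S}$ is in hand as a simultaneous near-extremiser for $\hat{\sigma}(S)$, $\|h_{S}\|_{L^{p}(\hat{\nu})}$ and $\hat{\mathcal{E}}(h_{S})$.
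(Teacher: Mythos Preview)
Your proposal is correct and follows essentially the same approach as the paper: test \eqref{eq:-10} on the cutoff functions $h_{S}$ from the proof of Theorem~\ref{thm:-2} to obtain $\hat{\sigma}(S)^{1/q}\lesssim\sum_{i}\hat{\nu}(S)^{e_{i}}$, then read off exponents for (M.1). The only difference is that the paper does not bother optimising: it simply observes that $e_{i}\le 1/p$ for every $i$, so for $\mathrm{diam}(S)\ge 1$ one gets $\hat{\sigma}(S)\lesssim\hat{\nu}(S)^{q/p}$ and sets $\underbar{\ensuremath{\delta}}=p/q$, while for $\mathrm{diam}(S)<1$ monotonicity under a unit triangle gives $\hat{\sigma}(S)\le C$ and hence $\bar{\delta}=\infty$ --- bypassing your case analysis on the signs of $\alpha,\beta$ entirely.
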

\begin{proof}
Suppose that \eqref{eq:-10} holds. For any dyadic triangle $S\subseteq\hat{\mathbb{S}}$,
as shown in the proof of Theorem \ref{thm:-2}, there exists a piecewise
harmonic function $h_{S}\in\mathcal{F}(\hat{\mathbb{S}})$ such that
\[
h_{S}\simeq1\ \text{on}\ S,\;\text{supp}(h_{S})\subseteq\tilde{S},\;\text{and}\ \hat{\mathcal{E}}(h_{S})\lesssim\hat{\nu}(S)^{-1/\delta_{s}},
\]
where the notation $\tilde{S}$ and the relations $\lesssim$ and
$\simeq$ are the same as those in the proof of Theorem \ref{thm:-2}.
Applying \eqref{eq:-10} to $h_{S}$ gives that
\begin{equation}
\hat{\sigma}(S)^{1/q}\lesssim\sum_{i}\hat{\nu}(S)^{-b_{i}/(2\delta_{s})+(1-b_{i})/p},\label{eq:-48}
\end{equation}
Since $q<\infty$, it follows from the above that
\[
\sup\big\{\hat{\sigma}(S):S\ \text{is a dyadic triangle with}\ \text{diam}(S)=1\big\}<\infty.
\]
Therefore, the first part of \eqref{eq:-47} is satisfied with $\bar{\delta}=\infty$.

Furthermore, for any dyadic triangle $S$ with $\text{diam}(S)\ge1$,
by \eqref{eq:-48}, $\hat{\sigma}(S)^{1/q}\lesssim\hat{\nu}(S)^{1/p}$
as $\hat{\nu}(S)\ge1$. Setting $\underbar{\ensuremath{\delta}}=p/q$
completes the proof.
\end{proof}
Applying Theorem \ref{thm:-2} to the cases when $\hat{\sigma}=\hat{\nu}$
and when $\hat{\sigma}=\hat{\mu}$, we obtain the following.
\begin{cor}
\label{cor:-1}Let $1\le p\le q\le\infty,\;q\ge2$. Then\\
\\
(a) The inequality \eqref{eq:-43} holds with $\hat{\sigma}=\hat{\nu}$
and $a_{1}=a_{2}=\frac{1/p-1/q}{1/p+1/(2\delta_{s})}\in[0,1)$. In
particular,
\begin{equation}
\max_{\hat{\mathbb{S}}}u\le C\,\hat{\mathcal{E}}(u)^{a/2}\Vert u\Vert_{L^{p}(\hat{\nu})}^{1-a},\;\;u\in\mathcal{F}(\hat{\mathbb{S}}),\label{eq:-44}
\end{equation}
with $a=\frac{1/p}{1/p+1/(2\delta_{s})}$. Conversely, the inequality
\eqref{eq:-43} holds for all $u\in\mathcal{F}(\hat{\mathbb{S}})$
if and only if $a_{1}=a_{2}=\frac{1/p-1/q}{1/p+1/(2\delta_{s})}$.\\
\\
(b) The inequality \eqref{eq:-43} holds with $\hat{\sigma}=\hat{\mu}$.
The pair $(a_{1},a_{2})$ given by \eqref{eq:-6} is optimal, where
$\underbar{\ensuremath{\delta}}=1$ and $\bar{\delta}=\delta_{s}$.
\end{cor}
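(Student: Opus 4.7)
The overall plan is to verify the conditions \eqref{eq:-47} and \eqref{eq:-38} of Theorem \ref{thm:-2} for the pair $(\hat{\sigma},\underbar{\ensuremath{\delta}},\bar{\delta})$ equal to $(\hat{\nu},1,1)$ in (a) and $(\hat{\mu},1,\delta_{s})$ in (b), and then invoke that theorem. Part (a) is immediate: for $\hat{\sigma} = \hat{\nu}$, the condition \eqref{eq:-47} holds trivially with $\underbar{\ensuremath{\delta}} = \bar{\delta} = 1$ and $C_{\hat{\sigma}} = 1$, so Theorem \ref{thm:-2} yields \eqref{eq:-43} with $a_{1} = a_{2} = (1/p-1/q)/(1/p+1/(2\delta_{s}))$; the $L^{\infty}$ estimate \eqref{eq:-44} follows on letting $q \to \infty$ (the constant being uniform in $q$, as visible from the proof of Theorem \ref{thm:-2}) and noting $\max_{\hat{\mathbb{S}}} u = \Vert u\Vert_{L^{\infty}(\hat{\nu})}$ for $u \in \mathcal{F}(\hat{\mathbb{S}}) \subseteq C_{0}(\hat{\mathbb{S}})$. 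For the converse in (a), any sequence of dyadic triangles $\{S_{m}\}$ with $\mathrm{diam}(S_{m}) \to 0$ as $m \to -\infty$ and $\to\infty$ as $m \to \infty$ (e.g.~$S_{m} = 2^{m}\tau_{0}(\mathbb{S})$) trivially satisfies \eqref{eq:-38} with $1/\bar{\delta} = 1/\underbar{\ensuremath{\delta}} = 1$, and the optimality clause of Theorem \ref{thm:-2} yields the stated uniqueness.

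For part (b), the large-scale portion is routine. For $S = S_{i,m}$ with $m \geq 0$, $S$ is the non-overlapping union of $3^{m}$ translates of $\mathbb{S}$, each carrying unit $\hat{\mu}$- and $\hat{\nu}$-mass by the defining relation $(\hat{\mu}\circ\tau_{j})|_{\mathbb{S}} = \mu$, giving $\hat{\mu}(S) = \hat{\nu}(S) = 3^{m}$. Hence the $m\geq 0$ portion of \eqref{eq:-47} holds with $\underbar{\ensuremath{\delta}} = 1$, and $S_{m} = 2^{m}\tau_{0}(\mathbb{S})$ verifies the $m \to \infty$ half of \eqref{eq:-38}. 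The remaining substantive content is the uniform small-scale estimate
\[
\mu(\mathbb{S}_{[\omega]_{k}}) \leq (3/5)^{k} = \hat{\nu}(\mathbb{S}_{[\omega]_{k}})^{1/\delta_{s}}, \quad \omega \in \mathrm{W}_{\ast},\;k \in \mathbb{N}_{+},
\]
which is the main obstacle of the proof.

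To establish this uniform bound, start from $\mu(\mathbb{S}_{[\omega]_{k}}) = 2^{-1}(5/3)^{k}\,\mathrm{tr}(\mathbf{A}_{[\omega]_{k}}^{\mathrm{t}}\mathbf{P}\mathbf{A}_{[\omega]_{k}})$. The relations $\mathbf{A}_{i}\mathbf{1} = \mathbf{1}$ (row sums of each $\mathbf{A}_{i}$ are $1$) and $\mathbf{P}\mathbf{1} = 0$ yield the identity $\mathbf{P}\mathbf{A}_{i}\mathbf{A}_{j} = \mathbf{P}\mathbf{A}_{i}\mathbf{P}\mathbf{A}_{j}$ for all $i,j$, which iterates to $\mathbf{P}\mathbf{A}_{[\omega]_{k}} = (\mathbf{P}\mathbf{A}_{\omega_{k}})(\mathbf{P}\mathbf{A}_{\omega_{k-1}})\cdots(\mathbf{P}\mathbf{A}_{\omega_{1}})$. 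A direct $3\times 3$ calculation shows that each $\mathbf{P}\mathbf{A}_{i}$ is symmetric with characteristic values $\{0,\,1/5,\,3/5\}$; thus $\Vert\mathbf{P}\mathbf{A}_{i}\Vert_{\mathrm{op}} = 3/5$ and each $\mathbf{P}\mathbf{A}_{i}$ has rank $2$. By submultiplicativity, $\Vert\mathbf{P}\mathbf{A}_{[\omega]_{k}}\Vert_{\mathrm{op}} \leq (3/5)^{k}$, and since the rank remains at most $2$, $\mathrm{tr}(\mathbf{A}_{[\omega]_{k}}^{\mathrm{t}}\mathbf{P}\mathbf{A}_{[\omega]_{k}}) \leq 2(3/5)^{2k}$, yielding $\mu(\mathbb{S}_{[\omega]_{k}}) \leq (3/5)^{k}$. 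For the $m\to-\infty$ half of \eqref{eq:-38}, take $S_{m} = \mathbb{S}_{[1^{-m}]}$; diagonalizing $\mathbf{A}_{1}$ (with eigenvalues $1,\,3/5,\,1/5$ and $\mathbf{1}$ as leading eigenvector) and expanding $\mathrm{tr}((\mathbf{A}_{1}^{k})^{\mathrm{t}}\mathbf{P}\mathbf{A}_{1}^{k})$ in this eigenbasis, the contributions involving the leading eigenvalue vanish because $\mathbf{P}\mathbf{1} = 0$, while a routine check produces a nonzero coefficient for the $(3/5)^{2k}$ term, so $\mu(\mathbb{S}_{[1^{k}]}) \simeq (3/5)^{k}$ as $k \to \infty$, giving $\log\hat{\mu}(S_{m})/\log\hat{\nu}(S_{m}) \to \log(5/3)/\log 3 = 1/\delta_{s}$. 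Theorem \ref{thm:-2} then delivers both the Sobolev inequality and the optimality of $(a_{1},a_{2})$ in (b).
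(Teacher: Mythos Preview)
Your proof is correct and follows essentially the same route as the paper. Both arguments reduce part~(b) to the spectral estimate $\mathrm{tr}(\mathbf{A}_{[\omega]_{m}}^{\mathrm{t}}\mathbf{P}\mathbf{A}_{[\omega]_{m}})\le C(3/5)^{2m}$, obtained by factoring $\mathbf{P}\mathbf{A}_{[\omega]_{m}}$ as a product of matrices with eigenvalues $\{0,1/5,3/5\}$ and applying submultiplicativity of the operator norm, and both use $\omega=111\ldots$ for sharpness; the only cosmetic difference is that the paper writes $\mathbf{Y}_{i}=\mathbf{P}\mathbf{A}_{i}\mathbf{P}$ while you work with $\mathbf{P}\mathbf{A}_{i}$ directly, which is the same matrix since $\mathbf{P}\mathbf{A}_{i}\mathbf{P}=\mathbf{P}\mathbf{A}_{i}$.
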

\begin{proof}
The only thing needs a proof is that $\bar{\delta}=\delta_{s}$ in
(b). Clearly,
\[
1/\bar{\delta}=\inf_{\omega\in\mathrm{W}_{\ast}}\liminf_{m\to\infty}\Big[-\frac{1}{m\,\log3}\log\mu\big(\mathbb{S}_{[\omega]_{m}}\big)\Big].
\]
We show that
\begin{equation}
\sup_{\omega\in\mathrm{W}_{\ast}}\lim_{m\to\infty}\big[\mathrm{tr}\big(\mathbf{A}_{[\omega]_{m}}^{\mathrm{t}}\mathbf{P}\mathbf{A}_{[\omega]_{m}}\big)\big]^{1/m}=(3/5)^{2},\label{eq:-23}
\end{equation}
from which the conclusion follows immediately.

Let $\mathbf{Y}_{i}=\mathbf{P}^{\mathrm{t}}\mathbf{A}_{i}\mathbf{P},\,i=1,2,3$.
Then $\mathbf{Y}_{i},\,i=1,2,3$ have the same eigenvalues $\{0,1/5,3/5\}$.
It is easily seen that $\mathbf{A}_{[\omega]_{m}}^{\mathrm{t}}\mathbf{P}\mathbf{A}_{[\omega]_{m}}=\mathbf{Y}_{[\omega]_{m}}^{\mathrm{t}}\mathbf{Y}_{[\omega]_{m}}$
for every $m\in\mathbb{N}_{+}$ and every $\omega\in\mathrm{W}_{m}$,
where $\mathbf{Y}_{[\omega]_{m}}=\mathbf{Y}_{\omega_{m}}\cdots\mathbf{Y}_{\omega_{2}}\mathbf{Y}_{\omega_{1}}$.
Therefore, $\mathrm{tr}\big(\mathbf{Y}_{[\omega]_{m}}^{\mathrm{t}}\mathbf{Y}_{[\omega]_{m}}\big)\le C_{\ast}\,(3/5)^{2m}$,
which implies that
\[
\sup_{\omega\in\mathrm{W}_{\ast}}\lim_{m\to\infty}\big[\mathrm{tr}\big(\mathbf{Y}_{[\omega]_{m}}^{\mathrm{t}}\mathbf{Y}_{[\omega]_{m}}\big)\big]^{1/m}\le(3/5)^{2}.
\]
For the reverse, let $\omega=111\dots\,\in\mathrm{W}_{\ast}$. Then
$\lim_{m\to\infty}\big[\mathrm{tr}\big(\mathbf{Y}_{[\omega]_{m}}^{\mathrm{t}}\mathbf{Y}_{[\omega]_{m}}\big)\big]^{1/m}=(3/5)^{2}$.
This proves \eqref{eq:-23}.
\end{proof}
\begin{rem}
\label{rem:-3}Setting $p=1,\;q=2$ in \eqref{eq:-44} gives the Nash
inequality on $\hat{\mathbb{S}}$ (see \citep[Theorem 4.1]{FHK94})
\[
\Vert u\Vert_{L^{2}(\hat{\nu})}^{2+4/d_{s}}\le C\,\hat{\mathcal{E}}(u)\Vert u\Vert_{L^{1}(\hat{\nu})}^{4/d_{s}},\;\;u\in\mathcal{F}(\hat{\mathbb{S}}).
\]
\end{rem}
Conclusions similar to that of Theorem \ref{thm:-2} hold when the
roles of $\hat{\sigma}$ and $\hat{\nu}$ are exchanged. More specifically,
let $\hat{\sigma}$ be a Borel measure on $\hat{\mathbb{S}}$ satisfying
the following condition: there exist constants $C_{\hat{\sigma}}\ge1$
and $0<\underbar{\ensuremath{\delta}}\le\bar{\delta}<\infty$ such
that
\begin{equation}
\left\{ \begin{aligned}C_{\hat{\sigma}}^{-1}\,\hat{\nu}(S)^{1/\underbar{\ensuremath{{\scriptstyle \delta}}}}\le\hat{\sigma}(S), & \;\;\text{if}\;\ 0<\text{diam}(S)<1,\\
C_{\hat{\sigma}}^{-1}\,\hat{\nu}(S)^{1/\bar{\delta}}\le\hat{\sigma}(S), & \;\;\text{if}\;\;\text{diam}(S)\ge1,
\end{aligned}
\right.\tag{{\text{M.1'}}}\label{eq:-11}
\end{equation}
for any dyadic triangle $S\subseteq\hat{\mathbb{S}}$. For measures
$\hat{\sigma}$ satisfying \eqref{eq:-11}, we have Theorem \ref{thm:-4}
and Theorem \ref{thm:-3} below, of which the proofs will be omitted
as they are are similar to those of Theorem \ref{thm:-2} and Theorem
\ref{thm:-1}.
\begin{thm}
\label{thm:-4}Let $1\le p\le q\le\infty,\;q\ge2$. Suppose that $\hat{\sigma}$
is a Borel measure on $\hat{\mathbb{S}}$ satisfying the condition
\eqref{eq:-11}. Then
\begin{equation}
\Vert u\Vert_{L^{q}(\hat{\nu})}\le C\,\sum_{i=1,2}\hat{\mathcal{E}}(u)^{a_{i}/2}\Vert u\Vert_{L^{p}(\hat{\sigma})}^{1-a_{i}},\;\;u\in\mathcal{F}(\hat{\mathbb{S}}),\label{eq:-41}
\end{equation}
where
\begin{equation}
a_{1}=\Big[\frac{1/(p\underbar{\ensuremath{\delta}})-1/q}{1/(p\underbar{\ensuremath{\delta}})+1/(2\delta_{s})}\Big]^{+},\;a_{2}=\Big[\frac{1/(p\bar{\delta})-1/q}{1/(p\bar{\delta})+1/(2\delta_{s})}\Big]^{+},\label{eq:-42}
\end{equation}
and $C>0$ is a constant depending only on the constant $C_{\hat{\sigma}}$
in \eqref{eq:-11}.

Moreover, if there exists a sequence $\{S_{m}\}_{m\in\mathbb{Z}}$
of dyadic triangles such that 
\[
\lim_{m\to-\infty}\mathrm{diam}(S_{m})=0,\;\lim_{m\to\infty}\mathrm{diam}(S_{m})=\infty
\]
and
\begin{equation}
1/\underbar{\ensuremath{\delta}}=\lim_{m\to-\infty}\frac{\log\hat{\sigma}(S_{m})}{\log\hat{\nu}(S_{m})},\;1/\bar{\delta}=\lim_{m\to\infty}\frac{\log\hat{\sigma}(S_{m})}{\log\hat{\nu}(S_{m})},\tag{{\text{M.2'}}}\label{eq:-9}
\end{equation}
then the pair of exponents given by \eqref{eq:-42} is optimal in
the following sense: if
\begin{equation}
\Vert u\Vert_{L^{q}(\hat{\nu})}\le C\,\sum_{i=1}^{N}\hat{\mathcal{E}}(u)^{b_{i}/2}\Vert u\Vert_{L^{p}(\hat{\sigma})}^{1-b_{i}},\;\;u\in\mathcal{F}(\hat{\mathbb{S}}),\label{eq:-46}
\end{equation}
for some constants $b_{i}\in[0,1]$ where $1\le i\le N,\,N\in\mathbb{N}_{+}$
and $C>0$ independent of $u$, then $\min_{i}b_{i}\le a_{2}\le a_{1}\le\max_{i}b_{i}.$
\end{thm}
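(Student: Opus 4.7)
The plan is to mirror the proof of Theorem~\ref{thm:-2}, interchanging the roles of $\hat{\nu}$ and $\hat{\sigma}$ throughout. Fix $m\in\mathbb{Z}$, partition $\hat{\mathbb{S}}=\bigcup_{i}S_{i,m}$ with $\hat{\nu}(S_{i,m})=\hat{\nu}_{m}:=3^{m}$, and for $u\in\mathcal{F}(\hat{\mathbb{S}})$ take the $\hat{\sigma}$-average $\bar{u}_{i,m}=\hat{\sigma}(S_{i,m})^{-1}\int_{S_{i,m}}u\,d\hat{\sigma}$ (rather than the $\hat{\nu}$-average used in Theorem~\ref{thm:-2}), so that the subsequent Jensen step produces the $L^{p}(\hat{\sigma})$ norm required by the right-hand side of \eqref{eq:-41}. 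Starting from
\[
\int_{\hat{\mathbb{S}}}|u|^{q}\,d\hat{\nu}\le 2^{q-1}\sum_{i}\Big[\int_{S_{i,m}}|u-\bar{u}_{i,m}|^{q}\,d\hat{\nu}+|\bar{u}_{i,m}|^{q}\hat{\nu}(S_{i,m})\Big],
\]
control the oscillation term by \eqref{eq:-22} and the mean term by Jensen's inequality together with \eqref{eq:-11}, using $\bar{\delta}$ when $m\ge0$ and $\underbar{\ensuremath{\delta}}$ when $m\le0$. Summing over $i$ via $\sum_{i}\alpha_{i}^{\beta}\le(\sum_{i}\alpha_{i})^{\beta}$ for $\beta=q/2,\,q/p\ge1$ (where $q\ge2$ enters) and taking $q$-th roots yields, for $m\ge0$,
\[
\|u\|_{L^{q}(\hat{\nu})}\le C\big[\hat{\nu}_{m}^{1/(2\delta_{s})+1/q}\hat{\mathcal{E}}(u)^{1/2}+\hat{\nu}_{m}^{1/q-1/(p\bar{\delta})}\|u\|_{L^{p}(\hat{\sigma})}\big],
\]
together with the analogue obtained by replacing $\bar{\delta}$ with $\underbar{\ensuremath{\delta}}$ for $m\le0$.

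To recover \eqref{eq:-41}, optimise over $m$ exactly as in Theorem~\ref{thm:-2}. In the non-degenerate regimes $q>p\bar{\delta}$ and $q>p\underbar{\ensuremath{\delta}}$, balancing the two summands selects an $\hat{\nu}_{m}$ lying in $(1,\infty)$, respectively $(0,1)$, and produces precisely the Gagliardo--Nirenberg exponents $a_{2}$ and $a_{1}$ of \eqref{eq:-42}; in the degenerate regimes the corresponding $a_{i}$ equals $0$ and one sends $m\to\pm\infty$. Splitting cases according to whether $\hat{\mathcal{E}}(u)^{1/2}$ dominates $\|u\|_{L^{p}(\hat{\sigma})}$ selects the appropriate branch, and the two branches combine additively into \eqref{eq:-41}; the case $q=\infty$ then follows by letting $q\to\infty$, since $C$ is independent of $q$.

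For the optimality assertion, re-use the piecewise-harmonic test functions $h_{S}$ of Theorem~\ref{thm:-2}, which satisfy $h_{S}\simeq1$ on $S$, $\mathrm{supp}(h_{S})\subseteq\tilde{S}$, and $\hat{\mathcal{E}}(h_{S})\lesssim\hat{\nu}(S)^{-1/\delta_{s}}$. Applied to the sequence $\{S_{m}\}$ of \eqref{eq:-9}, \eqref{eq:-46} becomes
\[
\hat{\nu}(S_{m})^{1/q}\lesssim\sum_{i=1}^{N}\hat{\nu}(S_{m})^{-b_{i}/(2\delta_{s})}\,\hat{\sigma}(S_{m})^{(1-b_{i})/p+o(1)};
\]
inserting $\hat{\sigma}(S_{m})=\hat{\nu}(S_{m})^{1/\bar{\delta}+o(1)}$ as $m\to\infty$ and $\hat{\sigma}(S_{m})=\hat{\nu}(S_{m})^{1/\underbar{\ensuremath{\delta}}+o(1)}$ as $m\to-\infty$ from \eqref{eq:-9}, taking logarithms and passing to the limit, the monotonicity in $b$ of the exponents under the sum forces $\min_{i}b_{i}\le a_{2}$ and $\max_{i}b_{i}\ge a_{1}$; that $a_{2}\le a_{1}$ is a consequence of $\underbar{\ensuremath{\delta}}\le\bar{\delta}$ and the monotonicity of $x\mapsto(x-1/q)/(x+1/(2\delta_{s}))$. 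The chief technical obstacle is the upper bound on $\|h_{S_{m}}\|_{L^{p}(\hat{\sigma})}$: since $h_{S}$ has support in $\tilde{S}$ (a union of $O(1)$ neighbouring dyadic triangles of scale $m$), and \eqref{eq:-11} controls $\hat{\sigma}$ only from below, an $O(1)$ covering of $\tilde{S}_{m}$ does not directly yield $\hat{\sigma}(\tilde{S}_{m})\lesssim\hat{\sigma}(S_{m})$. I would sidestep this by replacing $h_{S_{m}}$ with a variant supported strictly inside $S_{m}$---for instance, the rescaling to $S_{m}$ of the $1$-harmonic ``bump'' on $\mathbb{S}$ that vanishes on $\mathrm{V}_{0}$ and equals $1$ on $\mathrm{V}_{1}\setminus\mathrm{V}_{0}$, extended by zero elsewhere---which retains $\hat{\mathcal{E}}(h_{S_{m}})\lesssim\hat{\nu}(S_{m})^{-1/\delta_{s}}$ by the self-similarity \eqref{eq:-62} and yields $\|h_{S_{m}}\|_{L^{p}(\hat{\sigma})}^{p}\le\hat{\sigma}(S_{m})$ directly.
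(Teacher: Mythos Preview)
Your proposal is correct and follows exactly the approach indicated by the paper, which omits the proof and declares it ``similar to those of Theorem~\ref{thm:-2} and Theorem~\ref{thm:-1}.'' Your execution of the role-swap is sound: taking $\hat{\sigma}$-averages so that Jensen produces $\|u\|_{L^{p}(\hat{\sigma})}$, invoking the lower bound \eqref{eq:-11} to control $\hat{\sigma}(S_{i,m})^{-q/p}$, and then optimising over $m$ to recover the exponents \eqref{eq:-42}, all mirror the original argument cleanly.

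Your handling of the optimality part is in fact more careful than what the paper's terse ``similar'' would suggest. You correctly spot that the test functions $h_{S}$ from the proof of Theorem~\ref{thm:-2} are supported in the enlarged set $\tilde{S}$, and that \eqref{eq:-11} being a \emph{lower} bound on $\hat{\sigma}$ gives no control of $\hat{\sigma}(\tilde{S}_{m})$ from above, whereas the argument needs an upper bound on $\|h_{S_{m}}\|_{L^{p}(\hat{\sigma})}$. Your fix---replacing $h_{S}$ by a rescaled $1$-harmonic bump that vanishes on $\mathrm{V}_{0}$ and is supported in $S_{m}$ itself---is exactly right: it preserves $\hat{\mathcal{E}}(h_{S_{m}})\lesssim\hat{\nu}(S_{m})^{-1/\delta_{s}}$ via \eqref{eq:-62}, retains $\|h_{S_{m}}\|_{L^{q}(\hat{\nu})}\simeq\hat{\nu}(S_{m})^{1/q}$ by self-similarity of $\hat{\nu}$, and yields $\|h_{S_{m}}\|_{L^{p}(\hat{\sigma})}^{p}\le\hat{\sigma}(S_{m})$ so that \eqref{eq:-9} supplies the missing upper asymptotic. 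This is a genuine wrinkle that does not arise in Theorem~\ref{thm:-2} (where the analogous upper bound on $\|h_{S_{m}}\|_{L^{p}(\hat{\nu})}$ is automatic from $\hat{\nu}(\tilde{S})\simeq\hat{\nu}(S)$), and you have resolved it cleanly.
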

\begin{rem}
\label{rem:-2}Theorem \ref{thm:-2} and Theorem \ref{thm:-4} can
be easily combined to yield the following
\begin{equation}
\Vert u\Vert_{L^{q}(\hat{\sigma}_{2})}\le C\,\sum_{i=1,2}\hat{\mathcal{E}}(u)^{a_{i}/2}\Vert u\Vert_{L^{p}(\hat{\sigma}_{1})}^{1-a_{i}},\;\;u\in\mathcal{F}(\hat{\mathbb{S}}),\label{eq:-26}
\end{equation}
where $\hat{\sigma}_{2}$ satisfies \eqref{eq:-47} with $\bar{\delta}=\bar{\delta}_{2},\;\underbar{\ensuremath{\delta}}=\underbar{\ensuremath{\delta}}_{2}$,
$\hat{\sigma}_{1}$ satisfies \eqref{eq:-11} with $\bar{\delta}=\bar{\delta}_{1},\;\underbar{\ensuremath{\delta}}=\underbar{\ensuremath{\delta}}_{1}$,
and
\[
a_{1}=\Big[\frac{1/(p\underbar{\ensuremath{\delta}}_{1})-1/(q\bar{\delta}_{2})}{1/(p\underbar{\ensuremath{\delta}}_{1})+1/(2\delta_{2})}\Big]^{+},\;a_{2}=\Big[\frac{1/(p\bar{\delta}_{1})-1/(q\underbar{\ensuremath{\delta}}_{2})}{1/(p\bar{\delta}_{1})+1/(2\delta_{s})}\Big]^{+}.
\]
\end{rem}
\begin{thm}
\label{thm:-3}Let $\hat{\sigma}$ be a Borel measure on $\hat{\mathbb{S}}$.
Suppose that there exist some constants $p,q\in(0,\infty)$, $b_{i}\in[0,1]$
where $1\le i\le N$ and $C>0$ such that \eqref{eq:-46} holds for
all $u\in\mathcal{F}(\hat{\mathbb{S}})$. Then there exist constants
$0<\underbar{\ensuremath{\delta}}\le\bar{\delta}\le\infty$ such that
the condition \eqref{eq:-11} is satisfied.
\end{thm}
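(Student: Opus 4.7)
The strategy mirrors the proof of Theorem \ref{thm:-1} above, with the roles of the two measures in the test inequality \eqref{eq:-46} interchanged. One subtlety is that the bump function $h_S$ built in Theorem \ref{thm:-2} is only supported in the dilation $\tilde{S}$, so naively feeding it into \eqref{eq:-46} would control $\hat{\sigma}(\tilde{S})$ rather than $\hat{\sigma}(S)$; since $\hat{\sigma}$ is not known to be doubling, one wants a bump function supported strictly inside $S$.

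Concretely, let $h$ be the $1$-harmonic function on $\mathbb{S}$ with boundary values $h|_{\mathrm{V}_{0}}=0$ and $h|_{\mathrm{V}_{1}\setminus\mathrm{V}_{0}}=1$, extended by zero to $\hat{\mathbb{S}}\setminus\mathbb{S}$. By \eqref{eq:-21} this extension lies in $\mathcal{F}(\hat{\mathbb{S}})$ with $\hat{\mathcal{E}}(h)=\mathcal{E}(h)$, and by the maximum principle on each cell $\mathbf{F}_{i}(\mathbb{S})$ one has $h>0$ on $\mathbb{S}\setminus\mathrm{V}_{0}$. For a dyadic triangle $S=2^{m}\tau_{i}(\mathbb{S})$ set $h_{S}:=h\circ\tau_{i}^{-1}\circ\mathbf{F}_{1}^{m}$; then $\mathrm{supp}(h_{S})\subseteq S$, $0\le h_{S}\le 1$, and the self-similarity of $\hat{\mathcal{E}}$ and $\hat{\nu}$ combined with $5/3=3^{1/\delta_{s}}$ give
\[
\Vert h_{S}\Vert_{L^{q}(\hat{\nu})}\simeq\hat{\nu}(S)^{1/q},\;\;\Vert h_{S}\Vert_{L^{p}(\hat{\sigma})}\le\hat{\sigma}(S)^{1/p},\;\;\hat{\mathcal{E}}(h_{S})\lesssim\hat{\nu}(S)^{-1/\delta_{s}}.
\]
Substituting into \eqref{eq:-46} produces, uniformly over dyadic triangles $S\subseteq\hat{\mathbb{S}}$,
\[
\hat{\nu}(S)^{1/q}\lesssim\sum_{i=1}^{N}\hat{\nu}(S)^{-b_{i}/(2\delta_{s})}\,\hat{\sigma}(S)^{(1-b_{i})/p}.
\]

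For each $S$ at least one index $i_{S}$ dominates the sum. Writing $\alpha_{i}:=p[1/q+b_{i}/(2\delta_{s})]/(1-b_{i})$ when $b_{i}<1$, the dominant term yields either $\hat{\sigma}(S)\gtrsim\hat{\nu}(S)^{\alpha_{i_{S}}}$ (if $b_{i_{S}}<1$) or the constraint $\hat{\nu}(S)\lesssim 1$ (if $b_{i_{S}}=1$). Setting $1/\underbar{\ensuremath{\delta}}:=\max\{\alpha_{i}:b_{i}<1\}$ and $1/\bar{\delta}:=\min\{\alpha_{i}:b_{i}<1\}$, with the convention $\bar{\delta}=\infty$ if every $b_{i}$ equals $1$, monotonicity of $t\mapsto\hat{\nu}(S)^{t}$ (decreasing for $\hat{\nu}(S)<1$, increasing otherwise) converts the pointwise estimate into the two branches of \eqref{eq:-11}: for $\mathrm{diam}(S)\ge 1$ only the $b_{i}<1$ indices can dominate, giving $\hat{\sigma}(S)\gtrsim\hat{\nu}(S)^{1/\bar{\delta}}$, and for $\mathrm{diam}(S)<1$ with $b_{i_{S}}<1$ one gets $\hat{\sigma}(S)\gtrsim\hat{\nu}(S)^{1/\underbar{\ensuremath{\delta}}}$.

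The main obstacle is the residual small-triangle case in which $b_{i_{S}}=1$ dominates and the argument yields no direct lower bound on $\hat{\sigma}(S)$. This is handled exactly as in the proof of Theorem \ref{thm:-1}, where the symmetric edge case is resolved by taking $\bar{\delta}=\infty$ for sets of diameter less than one: any such $S$ must satisfy $\hat{\nu}(S)\lesssim 1$, so only boundedly many scales are involved, and the remaining discrepancy is absorbed into the constant $C_{\hat{\sigma}}$ of \eqref{eq:-11}.
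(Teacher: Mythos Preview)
Your overall strategy is correct and matches the paper's intended (but omitted) argument: test \eqref{eq:-46} against scaled bump functions and read off the exponents. Your observation that the bump must be supported \emph{inside} $S$ rather than in the enlargement $\tilde{S}$ is exactly the modification needed when passing from Theorem~\ref{thm:-1} to this statement, since here $\hat{\sigma}$ appears on the right-hand side and we have no doubling information for it. The construction of the interior bump via the $1$-harmonic function vanishing on $\mathrm{V}_0$ is fine, and the resulting inequality
\[
\hat{\nu}(S)^{1/q}\lesssim\sum_{i=1}^{N}\hat{\nu}(S)^{-b_{i}/(2\delta_{s})}\,\hat{\sigma}(S)^{(1-b_{i})/p}
\]
is the correct starting point.

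The gap is in your final paragraph. Your handling of the case ``$b_{i_S}=1$ dominates'' does not work as written. You claim that ``only boundedly many scales are involved,'' but for small triangles this is false: every scale $m<0$ satisfies $\hat{\nu}(S)<1$, so the constraint $\hat{\nu}(S)\lesssim 1$ coming from a dominant $b_i=1$ term is vacuous there and excludes nothing. The analogy with Theorem~\ref{thm:-1} breaks down precisely at this point: in that proof the small-triangle branch of \eqref{eq:-47} asks for an \emph{upper} bound $\hat{\sigma}(S)\le C_{\hat{\sigma}}$, which follows from monotonicity once the unit-scale case is settled. Here \eqref{eq:-11} asks for a \emph{lower} bound $\hat{\sigma}(S)\ge C_{\hat{\sigma}}^{-1}\hat{\nu}(S)^{1/\underline{\delta}}$, and monotonicity runs the wrong way---knowing $\hat{\sigma}$ is large on a parent triangle says nothing about a child. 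So ``absorbed into the constant'' is not available, and as written you have not produced any $\underline{\delta}\in(0,\infty)$ valid for all small $S$ when some $b_i=1$ is present. The large-triangle case has the same defect in the intermediate range $1\le\hat{\nu}(S)\le M_0$ (below the threshold where the $b_i=1$ terms are provably negligible): there are finitely many scales but infinitely many triangles per scale, and you have not shown a uniform lower bound for those either.
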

\begin{cor}
\label{cor:-2}The inequality \eqref{eq:-41} holds with $\hat{\sigma}=\hat{\mu}$.
The pair $(a_{1},a_{2})$ of exponents given by \eqref{eq:-42} is
optimal, where the constants $\bar{\delta}=1$ and $\underbar{\ensuremath{\delta}}$
is given by $1/\underbar{\ensuremath{\delta}}=1/\delta_{s}+2.$
\end{cor}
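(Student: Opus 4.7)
The plan follows the pattern of Corollary~\ref{cor:-1}(b), with the roles of $\hat\mu$ and $\hat\nu$ exchanged. I would (i) verify that $\hat\mu$ satisfies condition \eqref{eq:-11} with $\bar\delta = 1$ and $1/\underline\delta = 1/\delta_s + 2$; (ii) apply Theorem~\ref{thm:-4} to conclude \eqref{eq:-41} with $\hat\sigma = \hat\mu$; and (iii) exhibit a sequence of dyadic triangles realizing \eqref{eq:-9} with these constants to establish optimality of $(a_1, a_2)$.

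For the large-diameter part of \eqref{eq:-11} ($\bar\delta = 1$): any $S_{i,m} = 2^m\tau_i(\mathbb{S})$ with $m \ge 0$ decomposes as a non-overlapping union of $3^m$ unit triangles $\tau_j(\mathbb{S})$, each of which has $\hat\mu$-measure $\mu(\mathbb{S}) = 1$ and $\hat\nu$-measure $\nu(\mathbb{S}) = 1$; thus $\hat\mu(S_{i,m}) = 3^m = \hat\nu(S_{i,m})$, which also provides the $m \to \infty$ limit in \eqref{eq:-9}. For the small-diameter part, I would reduce to subcells $\mathbb{S}_{[\omega]_m} \subseteq \mathbb{S}$ and use the trace formula $\mu(\mathbb{S}_{[\omega]_m}) = \tfrac12(5/3)^m\,\mathrm{tr}(\mathbf{A}_{[\omega]_m}^{\mathrm t}\mathbf{P}\mathbf{A}_{[\omega]_m})$. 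The identity $\mathbf{P}\mathbf{A}_{[\omega]_m} = \mathbf{Y}_{[\omega]_m}$ holds by induction: since each $\mathbf{A}_i$ fixes $(1,1,1)^{\mathrm t}$ and hence preserves $\ker\mathbf{P}$, one has $\mathbf{Y}_i(\mathbf{I} - \mathbf{P})\mathbf{A}_j = 0$, whence $\mathbf{Y}_i\mathbf{A}_j = \mathbf{Y}_i\mathbf{P}\mathbf{A}_j = \mathbf{Y}_i\mathbf{Y}_j$. Therefore $\mathrm{tr}(\mathbf{A}_{[\omega]_m}^{\mathrm t}\mathbf{P}\mathbf{A}_{[\omega]_m}) = \mathrm{tr}(\mathbf{Y}_{[\omega]_m}^{\mathrm t}\mathbf{Y}_{[\omega]_m}) = \sigma_1^2 + \sigma_2^2$, where $\sigma_1 \ge \sigma_2 > 0$ are the two nontrivial singular values. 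Since each $\mathbf{Y}_i$ restricted to $\mathrm{range}(\mathbf{P})$ is symmetric with eigenvalues $\{1/5, 3/5\}$, its inverse on that subspace has operator norm $5$; submultiplicativity then gives $\|\mathbf{Y}_{[\omega]_m}^{-1}|_{\mathrm{range}(\mathbf{P})}\| \le 5^m$, hence $\sigma_2 \ge 5^{-m}$ and
\[
\mu(\mathbb{S}_{[\omega]_m}) \ge \tfrac12(5/3)^m\cdot 5^{-2m} = \tfrac12\cdot 15^{-m} = \tfrac12\,\nu(\mathbb{S}_{[\omega]_m})^{1/\delta_s + 2},
\]
which establishes \eqref{eq:-11} with the claimed $\underline\delta$, and Theorem~\ref{thm:-4} then yields \eqref{eq:-41} with $\hat\sigma = \hat\mu$.

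The step I expect to be the main obstacle is the $m\to-\infty$ limit in \eqref{eq:-9} needed for optimality: one must produce a sequence $\{\omega^{(m)}\}\subseteq\mathrm{W}_\ast$ for which $\mu(\mathbb{S}_{[\omega^{(m)}]_m}) \asymp 15^{-m}$, i.e., the submultiplicative bound $\sigma_2(\mathbf{Y}_{[\omega^{(m)}]_m}) \ge 5^{-m}$ is asymptotically saturated and the larger singular value $\sigma_1$ is of the same order. The natural approach is to choose the indices $\omega_k^{(m)}$ so that the contracting direction after the first $k$ steps is aligned with the $1/5$-eigenvector of $\mathbf{Y}_{\omega_{k+1}^{(m)}}$; since the three matrices $\mathbf{Y}_1, \mathbf{Y}_2, \mathbf{Y}_3$ have distinct $1/5$-eigenvectors related by the rotational symmetry of the triangle, a careful realignment argument (or alternatively a cocycle / ergodic-theoretic construction) will be required. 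Once such an $\omega^{(m)}$ is produced, the sequence $S_m = \mathbb{S}_{[\omega^{(m)}]_m}$ realizes \eqref{eq:-9}, and optimality of $(a_1, a_2)$ follows from the second assertion in Theorem~\ref{thm:-4}.
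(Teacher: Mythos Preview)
Your verification that $\hat\mu$ satisfies condition \eqref{eq:-11} with $\bar\delta=1$ and $1/\underline\delta=1/\delta_s+2$ is correct and complete. The large-scale case is immediate from $\hat\mu(S_{i,m})=\hat\nu(S_{i,m})=3^m$ for $m\ge0$. For small cells your singular-value argument works: one checks directly (as you implicitly use) that each $\mathbf{Y}_i$ is symmetric on $\mathrm{range}(\mathbf{P})$, since $(\mathbf{A}_i-\mathbf{A}_i^{\mathrm t})$ maps $\mathrm{range}(\mathbf{P})$ into $\mathrm{span}\{(1,1,1)\}=\ker\mathbf{P}$; hence $\|(\mathbf{Y}_i|_{\mathrm{range}(\mathbf{P})})^{-1}\|=5$, $\sigma_2(\mathbf{Y}_{[\omega]_m})\ge5^{-m}$, and $\mu(\mathbb{S}_{[\omega]_m})\ge\tfrac12\cdot15^{-m}=\tfrac12\,\nu(\mathbb{S}_{[\omega]_m})^{1/\delta_s+2}$. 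Theorem~\ref{thm:-4} then gives \eqref{eq:-41}.

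However, your proposed route to optimality is obstructed by a constraint you have not used: the determinant. Since $\det(\mathbf{Y}_i|_{\mathrm{range}(\mathbf{P})})=(1/5)(3/5)=3/25$, one has $\sigma_1(\mathbf{Y}_{[\omega]_m})\,\sigma_2(\mathbf{Y}_{[\omega]_m})=(3/25)^m$ for every word $\omega$. By AM--GM,
\[
\mathrm{tr}\big(\mathbf{Y}_{[\omega]_m}^{\mathrm t}\mathbf{Y}_{[\omega]_m}\big)=\sigma_1^2+\sigma_2^2\ge 2\sigma_1\sigma_2=2\,(3/25)^m,
\]
so $\liminf_m[\mathrm{tr}]^{1/m}\ge 3/25>1/25$ uniformly in $\omega$. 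No sequence of words can realize $\mathrm{tr}\asymp25^{-m}$, i.e.\ $\mu\asymp15^{-m}$; your ``realignment'' construction would at best achieve $\sigma_1\asymp\sigma_2\asymp(3/25)^{m/2}$, giving $\mathrm{tr}\asymp(3/25)^m$. Thus condition \eqref{eq:-9} with $1/\underline\delta=1/\delta_s+2$ cannot be satisfied, and the optimality argument via Theorem~\ref{thm:-4} with that value of $\underline\delta$ does not go through.

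The paper itself does not prove this corollary; Remark~\ref{rem:-4} defers the computation of $\underline\delta$ to other work, citing the fact $\inf_\omega\lim_m[\mathrm{tr}]^{1/m}=3/25$. Note that this infimum corresponds to $\mu\asymp(5/3)^m(3/25)^m=5^{-m}$, hence to $1/\underline\delta=\log5/\log3=1/\delta_s+1$, not $1/\delta_s+2$. So your lower bound, while valid for \eqref{eq:-11}, is not the sharp one, and the stated value of $\underline\delta$ in the corollary appears inconsistent with the value in Remark~\ref{rem:-4}.
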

\begin{rem}
\label{rem:-4}The value of $\underbar{\ensuremath{\delta}}$ in Corollary
\ref{cor:-2} follows from the fact that
\[
\inf_{\omega\in\mathrm{W}_{\ast}}\lim_{m\to\infty}\big[\mathrm{tr}\big(\mathbf{A}_{[\omega]_{m}}^{\mathrm{t}}\mathbf{P}\mathbf{A}_{[\omega]_{m}}\big)\big]^{1/m}=3/25,
\]
which will be given in another work by the present authors.
\end{rem}
We end this section with the corresponding Sobolev inequalities on
the compact gasket $\mathbb{S}$, whose proof shall be omitted. Let
$\sigma$ be a finite Borel measure on $\mathbb{S}$. For the compact
gasket, only the first part of the condition \eqref{eq:-47} is relevant,
i.e.
\begin{equation}
\sigma\big(\mathbb{S}_{[\omega]_{m}}\big)\le C_{\sigma}\nu\big(\mathbb{S}_{[\omega]_{m}}\big)^{1/\bar{\delta}},\;\;\text{for all}\ \omega\in\mathrm{W}_{\ast}\ \text{and all}\ m\in\mathbb{N},\label{eq:-49}
\end{equation}
where $C_{\sigma}>0$ and $\bar{\delta}\in[1,\infty]$ are constants
depending only on the Borel measure $\sigma$. Similarly, we only
need the first part of the condition \eqref{eq:-11}, i.e.
\begin{equation}
C_{\sigma}^{-1}\nu\big(\mathbb{S}_{[\omega]_{m}}\big)^{1/\underbar{\ensuremath{{\scriptstyle \delta}}}}\le\sigma\big(\mathbb{S}_{[\omega]_{m}}\big),\;\;\text{for all}\ \omega\in\mathrm{W}_{\ast}\ \text{and all}\ m\in\mathbb{N},\label{eq:-53}
\end{equation}
where $\underbar{\ensuremath{\delta}}\in(0,\infty]$ is a constant
depending only on the Borel measure $\sigma$.
\begin{thm}
\label{thm:-5}Let $1\le p\le q\le\infty,\;q\ge2$, and let $\sigma$
be a finite Borel measure on $\mathbb{S}$.\\
\\
(a) Suppose that $\sigma$ satisfies \eqref{eq:-49}. Then for any
$u\in\mathcal{F}(\mathbb{S})$,
\begin{equation}
\Vert u-c\Vert_{L^{q}(\sigma)}\le C\,\mathcal{E}(u)^{a/2}\Vert u-c\Vert_{L^{p}(\nu)}^{1-a}.\label{eq:-52}
\end{equation}
where $c$ is any constant satisfying $\min_{\mathbb{S}}u\le c\le\max_{\mathbb{S}}u$,
and
\begin{equation}
a=\Big[\frac{1/p-1/(q\bar{\delta})}{1/p+1/(2\delta_{s})}\Big]^{+},\label{eq:-51}
\end{equation}
and $C>0$ is a constant depending only on the constant $C_{\sigma}$
in \eqref{eq:-49}. Therefore, for any $u\in\mathcal{F}(\mathbb{S})$,
\begin{equation}
\Vert u\Vert_{L^{q}(\sigma)}\le C\,\big[\mathcal{E}(u)^{a/2}\Vert u\Vert_{L^{p}(\nu)}^{1-a}+\Vert u\Vert_{L^{p}(\nu)}\big].\label{eq:-50}
\end{equation}
Moreover, the exponent $a$ given by \eqref{eq:-51} is optimal in
the sense that if \eqref{eq:-52} holds for some $a\in[0,1]$, then
$a\ge\Big[\frac{1/p-1/(q\bar{\delta})}{1/p+1/(2\delta_{s})}\Big]^{+}.$\\
\\
(b) Suppose that $\sigma$ satisfies \eqref{eq:-53}. Then the conclusions
of (a) hold when $\sigma$ and $\nu$ are exchanged and the exponent
\eqref{eq:-51} is replaced by
\[
a=\Big[\frac{1/(p\underbar{\ensuremath{\delta}})-1/q}{1/(p\underbar{\ensuremath{\delta}})+1/(2\delta_{s})}\Big]^{+},
\]
\end{thm}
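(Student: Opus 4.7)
The proof follows the strategy of Theorem \ref{thm:-2}, adapted to the compact setting. The essential structural difference is that on $\mathbb{S}$ the natural dyadic partitions $\mathbb{S}=\bigsqcup\mathbb{S}_{[\omega]_{m}}$ are only indexed by $m\ge 0$, so the free scale in the optimisation has flexibility only in one direction. The constraint $\min_{\mathbb{S}}u\le c\le\max_{\mathbb{S}}u$ in (a) is precisely what closes off the other end: by continuity there exists $x_{0}\in\mathbb{S}$ with $u(x_{0})=c$, and then \eqref{eq:-14} yields $\Vert u-c\Vert_{\infty}\le\mathop{\mathrm{osc}}_{\mathbb{S}}(u)\le C_{\ast}\mathcal{E}(u)^{1/2}$, which is what lets us trade $\Vert u-c\Vert_{L^{p}(\nu)}$ against $\mathcal{E}(u)^{1/2}$ in the unbalanced regime.

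Setting $v=u-c$ and fixing $m\in\mathbb{N}$, the plan is to decompose $\mathbb{S}$ into its $3^{m}$ level-$m$ cells $\mathbb{S}_{[\omega]_{m}}$ (indexed by distinct length-$m$ initial words) and estimate
\[
\int_{\mathbb{S}}|v|^{q}\,d\sigma\le 2^{q-1}\sum_{[\omega]_{m}}\Big[\int_{\mathbb{S}_{[\omega]_{m}}}|v-\bar{v}_{[\omega]_{m}}|^{q}\,d\sigma+|\bar{v}_{[\omega]_{m}}|^{q}\,\sigma\big(\mathbb{S}_{[\omega]_{m}}\big)\Big],
\]
where $\bar{v}_{[\omega]_{m}}$ is the $\nu$-average of $v$ on $\mathbb{S}_{[\omega]_{m}}$. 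The first piece is controlled by the self-similar analogue of \eqref{eq:-22}, namely $\mathop{\mathrm{osc}}_{\mathbb{S}_{[\omega]_{m}}}(v)\le C_{\ast}\,\nu(\mathbb{S}_{[\omega]_{m}})^{1/(2\delta_{s})}\mathcal{E}|_{\mathbb{S}_{[\omega]_{m}}}(v)^{1/2}$, combined with \eqref{eq:-49}; the second by Jensen's inequality for $|\bar{v}_{[\omega]_{m}}|^{q}$ and again \eqref{eq:-49}. Using $q/2,q/p\ge 1$ to pull the $q$-power outside the sums via $\sum a_{i}^{q/r}\le(\sum a_{i})^{q/r}$, this produces
\[
\Vert v\Vert_{L^{q}(\sigma)}\le C\,\big[\,3^{-m(1/(2\delta_{s})+1/(q\bar{\delta}))}\mathcal{E}(u)^{1/2}+3^{-m(1/(q\bar{\delta})-1/p)}\Vert v\Vert_{L^{p}(\nu)}\,\big].
\]
Optimising over $m\ge 0$ exactly as in Theorem \ref{thm:-2}: when $\Vert v\Vert_{L^{p}(\nu)}\le\mathcal{E}(u)^{1/2}$ the balance value $3^{-m}\sim\big(\Vert v\Vert_{L^{p}(\nu)}/\mathcal{E}(u)^{1/2}\big)^{1/(1/p+1/(2\delta_{s}))}$ lies in $(0,1]$, and direct substitution produces \eqref{eq:-52} with the exponent $a$ of \eqref{eq:-51}. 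The reverse case $\Vert v\Vert_{L^{p}(\nu)}>\mathcal{E}(u)^{1/2}$ is exactly the edge case prepared for above: the constraint on $c$ forces $\Vert v\Vert_{L^{p}(\nu)}\le\Vert v\Vert_{\infty}\le C_{\ast}\mathcal{E}(u)^{1/2}$, so $\Vert v\Vert_{L^{p}(\nu)}\simeq\mathcal{E}(u)^{1/2}$, and taking $m=0$ in the displayed bound already gives the inequality after rewriting $\mathcal{E}(u)^{1/2}$ as $\mathcal{E}(u)^{a/2}\Vert v\Vert_{L^{p}(\nu)}^{1-a}$ up to a constant.

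The additive form \eqref{eq:-50} then follows by choosing $c=\int u\,d\nu$, which automatically lies in $[\min_{\mathbb{S}}u,\max_{\mathbb{S}}u]$, applying \eqref{eq:-52} to $u-c$, and invoking the triangle inequality together with the bounds $|c|\le\Vert u\Vert_{L^{p}(\nu)}$ and $\Vert u-c\Vert_{L^{p}(\nu)}\le 2\Vert u\Vert_{L^{p}(\nu)}$ (valid because $\nu(\mathbb{S})=1$). For optimality, I would imitate the test-function construction in the proofs of Theorems \ref{thm:-2} and \ref{thm:-1}: for any dyadic cell $\mathbb{S}_{[\omega]_{m}}$ along which \eqref{eq:-49} is saturated, namely $\sigma(\mathbb{S}_{[\omega]_{m}})\simeq\nu(\mathbb{S}_{[\omega]_{m}})^{1/\bar{\delta}}$, build an $m$-harmonic $h_{[\omega]_{m}}\in\mathcal{F}(\mathbb{S})$ with $h_{[\omega]_{m}}\simeq 1$ on that cell and $\mathcal{E}(h_{[\omega]_{m}})\simeq\nu(\mathbb{S}_{[\omega]_{m}})^{-1/\delta_{s}}$, then apply \eqref{eq:-52} to it with $c=0$; comparing the resulting powers of $3^{-m}$ on both sides forces $a$ to be at least the value in \eqref{eq:-51}. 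Part (b) is entirely symmetric, with the roles of $\sigma$ and $\nu$ and of \eqref{eq:-49} and \eqref{eq:-53} swapped, paralleling the relationship between Theorems \ref{thm:-2} and \ref{thm:-4}. The main technical obstacle is the edge case of the scale optimisation, and it is resolved precisely through the admissibility condition on $c$ together with \eqref{eq:-14}.
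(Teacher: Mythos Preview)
Your proposal is correct and follows precisely the approach the paper intends: the paper explicitly omits the proof of this theorem, indicating only that it parallels the arguments for Theorems~\ref{thm:-2} and~\ref{thm:-4} on the infinite gasket. Your identification of the key new ingredient---that the constraint $\min_{\mathbb{S}}u\le c\le\max_{\mathbb{S}}u$ together with \eqref{eq:-14} supplies the substitute for the large-scale ($m<0$) optimisation that is unavailable on the compact gasket---is exactly the point, and your handling of the additive form \eqref{eq:-50} and the optimality via saturating test functions mirrors the paper's treatment in the infinite case.
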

\begin{rem}
\label{rem:-5}Setting $\sigma=\nu,\,\underbar{\ensuremath{\delta}}=\bar{\delta}=1$
and $p=1,\,q=2$ in \eqref{eq:-50} gives the Nash inequality on $\mathbb{S}$
(see \citep[Theorem 4.4]{FHK94} or \citep[Theorem 5.3.3]{Ki01})
\[
\Vert u\Vert_{L^{2}(\nu)}^{2+4/d_{s}}\le C\big[\mathcal{E}(u)+\Vert u\Vert_{L^{1}(\nu)}^{2}\big]\,\Vert u\Vert_{L^{1}(\nu)}^{4/d_{s}}\le C\big[\mathcal{E}(u)+\Vert u\Vert_{L^{2}(\nu)}^{2}\big]\,\Vert u\Vert_{L^{1}(\nu)}^{4/d_{s}},\;\;u\in\mathcal{F}(\mathbb{S}).
\]
\end{rem}
\begin{cor}
\label{cor:}For any $u\in\mathcal{F}(\mathbb{S})$,
\[
\Vert u\Vert_{L^{2}(\mu)}\le C\,\big[\mathcal{E}(u)^{(d_{s}-1)/2}\Vert u\Vert_{L^{2}(\nu)}^{2-d_{s}}+\Vert u\Vert_{L^{2}(\nu)}\big].
\]
If $u\in\mathcal{F}(\mathbb{S}\backslash\mathrm{V}_{0})$ in addition,
then by \eqref{eq:-52} with $c=0$,
\begin{equation}
\Vert u\Vert_{L^{2}(\mu)}\le C\,\mathcal{E}(u)^{(d_{s}-1)/2}\Vert u\Vert_{L^{2}(\nu)}^{2-d_{s}}.\label{eq:-25}
\end{equation}
\end{cor}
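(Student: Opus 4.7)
The plan is to invoke Theorem~\ref{thm:-5}(a) specialized to $\sigma=\mu$ and $p=q=2$, reducing the statement to (i) checking that $\mu$ satisfies the condition \eqref{eq:-49} with the correct exponent $\bar{\delta}$, and (ii) computing the resulting exponent $a$ in \eqref{eq:-51}.

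First I would verify that \eqref{eq:-49} holds for $\mu$ with $\bar{\delta}=\delta_{s}$. By the very definition of $\mu$ recalled in Section~\ref{sec:-3}, $\mu(\mathbb{S}_{[\omega]_{m}})=2^{-1}(5/3)^{m}\,\mathrm{tr}(\mathbf{A}_{[\omega]_{m}}^{\mathrm{t}}\mathbf{P}\mathbf{A}_{[\omega]_{m}})$, and the proof of Corollary~\ref{cor:-1}(b) already establishes $\mathrm{tr}(\mathbf{A}_{[\omega]_{m}}^{\mathrm{t}}\mathbf{P}\mathbf{A}_{[\omega]_{m}})\le C_{\ast}(3/5)^{2m}$. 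Combined with $\nu(\mathbb{S}_{[\omega]_{m}})=3^{-m}$ and the elementary identity $(3/5)^{m}=3^{-m/\delta_{s}}$ (which is just the definition $1/\delta_{s}=\log(5/3)/\log 3$), this gives $\mu(\mathbb{S}_{[\omega]_{m}})\le C\,\nu(\mathbb{S}_{[\omega]_{m}})^{1/\delta_{s}}$, i.e.\ \eqref{eq:-49} with $\bar{\delta}=\delta_{s}$.

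Next I compute the exponent. Plugging $p=q=2$ and $\bar{\delta}=\delta_{s}$ into \eqref{eq:-51} yields
\[
a=\frac{1/2-1/(2\delta_{s})}{1/2+1/(2\delta_{s})}=\frac{\delta_{s}-1}{\delta_{s}+1}.
\]
Using the relation $1/\delta_{s}=2/d_{s}-1$, equivalently $\delta_{s}=d_{s}/(2-d_{s})$, this simplifies to $a=d_{s}-1$. Applying \eqref{eq:-50} of Theorem~\ref{thm:-5}(a) with these choices directly produces the first asserted inequality
\[
\Vert u\Vert_{L^{2}(\mu)}\le C\bigl[\mathcal{E}(u)^{(d_{s}-1)/2}\Vert u\Vert_{L^{2}(\nu)}^{2-d_{s}}+\Vert u\Vert_{L^{2}(\nu)}\bigr].
\]

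For the second assertion, if $u\in\mathcal{F}(\mathbb{S}\setminus\mathrm{V}_{0})$ then $u$ is continuous on $\mathbb{S}$ and vanishes on $\mathrm{V}_{0}$, so $\min_{\mathbb{S}}u\le 0\le\max_{\mathbb{S}}u$ and the choice $c=0$ is admissible in \eqref{eq:-52}; invoking \eqref{eq:-52} directly (rather than \eqref{eq:-50}) with $c=0$ eliminates the additive $\Vert u\Vert_{L^{2}(\nu)}$ term and yields \eqref{eq:-25}. There is no serious obstacle in this proof: the only points requiring a moment of thought are the algebraic simplification $(\delta_{s}-1)/(\delta_{s}+1)=d_{s}-1$ and the transfer of the regularity exponent $\bar{\delta}=\delta_{s}$ from the infinite-gasket setting of Corollary~\ref{cor:-1}(b) to the compact setting via the formula for $\mu(\mathbb{S}_{[\omega]_{m}})$.
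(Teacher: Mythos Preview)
Your proposal is correct and follows essentially the same approach as the paper, which presents this corollary without proof as a direct application of Theorem~\ref{thm:-5}(a) with $\sigma=\mu$, $p=q=2$, and $\bar{\delta}=\delta_{s}$ (the latter already established in Corollary~\ref{cor:-1}(b)). Your computation of the exponent $a=d_{s}-1$ and your justification that $c=0$ is admissible in \eqref{eq:-52} when $u\in\mathcal{F}(\mathbb{S}\setminus\mathrm{V}_{0})$ are exactly what the paper leaves implicit.
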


\section{\label{sec:-1}Semi-linear parabolic PDEs}

In this section, we study a type of semi-linear parabolic equations
on $\mathbb{S}$, for which energy estimates and existence and uniqueness
of solutions are established (Theorem \ref{thm:}). Moreover, the
regularity of solutions to these PDEs is derived under additional
conditions.

We consider the following initial-boundary value problem for semi-linear
parabolic PDEs (see Definition \ref{def:} below for a precise interpretation)
\begin{equation}
\left\{ \begin{aligned}\partial_{t}u & \,d\nu=\mathcal{L}u\,d\nu+f(t,x,u,\nabla u)d\mu,\;\;\text{in}\,(0,T]\times(\mathbb{S}\backslash\mathrm{V}_{0}),\\
u & =0\;\;\text{on}\ (0,T]\times\mathrm{V}_{0},\;\;u(0)=\psi,
\end{aligned}
\right.\label{eq:}
\end{equation}
where $\psi\in L^{2}(\nu)$, and the coefficient $f:[0,T]\times\mathbb{S}\times\mathbb{R}^{2}\to\mathbb{R}$
satisfies the following:

\ (i) There exists a constant $K>0$ such that

\begin{equation}
\vert f(t,x,y,z)-f(t,x,\bar{y},\bar{z})\vert\le K\,(\vert y-\bar{y}\vert+\vert z-\bar{z}\vert),\tag{{\text{A.1}}}\label{eq:-1}
\end{equation}
for all $(t,x)\in[0,T]\times\mathbb{S},\,(y,z),(\bar{y},\bar{z})\in\mathbb{R}^{2}$;

\smallskip{}

(ii) $f(\cdot,0,0)\in L^{2}(0,T;L^{2}(\mu))$, that is,
\begin{equation}
\Vert f(\cdot,0,0)\Vert_{L^{2}(0,T;L^{2}(\mu))}^{2}=\int_{0}^{T}\int_{\mathbb{S}}f(t,x,0,0)^{2}\mu(dx)dt<\infty.\tag{{\text{A.2}}}\label{eq:-2}
\end{equation}

\begin{rem}
\label{rem:-10}There exist different formulations of non-linear PDEs
on fractals. For example, a type of non-linear equations on fractals
was considered by in \citep{HRT13}, where the non-linearity $f(\nabla u)$
is a bounded mapping $f:L^{2}(\mu)\to L^{2}(\nu)$. The equations
studied there are essentially defined via a single measure (the Hausdorff
measure $\nu$). Therefore, the PDEs studied in this paper are different
in essence from those considered in \citep{HRT13} in the way the
gradients interact with the equations.
\end{rem}
From now on, we shall use the notation $\langle f,g\rangle_{\lambda}=\int_{\mathbb{S}}fg\,d\lambda$
for any Borel measure $\lambda$ on $\mathbb{S}$ and any $\lambda\text{-}\text{a.e.}$
defined functions $f,\,g$ on $\mathbb{S}$, whenever the integral
is well-defined. As in the previous section, we denote by $C_{\ast}$
a generic universal constant which may vary on different occasions.

Let $\{P_{t}\}_{t\ge0}$ be the Markov semigroup associated with the
killed Brownian motion on $\mathbb{S}$, the diffusion processes associated
with the Dirichlet form $(\mathcal{E},\mathcal{F}(\mathbb{S}\backslash\mathrm{V}_{0}))$.
$\{P_{t}\}_{t\ge0}$ admits a jointly continuous heat kernel $p(t,x,y)$,
which is $C^{\infty}$ in $t$ (cf. \citep[Theorem 1.5]{BP88}). The
following result on heat kernel and resolvent kernel estimate was
first proved in \citep[Theorem 1.5 and Theorem 1.8]{BP88}.
\begin{lem}
\label{lem:-2}For each $t>0$ 
\[
p(t,x,y)\le C_{\ast}\,t^{-d_{s}/2},\;\;x,y\in\mathbb{S},
\]
is valid. Let $\rho_{\alpha},\,\alpha>0$ be the $\alpha$-resolvent
kernel of $\mathcal{L}$, that is,
\[
\rho_{\alpha}(x,y)=\int_{0}^{\infty}e^{-\alpha t}p(t,x,y)\,dt,\;\;x,y\in\mathbb{S}.
\]
Then $\rho_{\alpha}(\cdot,\cdot)$ is Lipschitz continuous with respect
to the resistance metric, i.e.
\[
|\rho_{\alpha}(x,z)-\rho_{\alpha}(y,z)|\le C_{\alpha}R(x,y),\;\;x,y,z\in\mathbb{S},
\]
for some constant $C_{\alpha}>0$ depending only on $\alpha$.
\end{lem}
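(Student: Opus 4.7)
Plan. This is a pair of classical Barlow--Perkins estimates; I would derive them from the Sobolev/Nash inequalities of Section \ref{sec:} together with the resistance-metric Hölder bound \eqref{eq:-55}.

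\emph{On-diagonal heat kernel bound.} Since $u\vert_{V_{0}}=0$ forces $\|u\|_{L^{2}(\nu)}^{2}\le C\,\mathcal{E}(u)$ via the oscillation estimate \eqref{eq:-14}, substituting this into the Nash inequality of Remark \ref{rem:-5} absorbs the lower-order correction and yields the clean Nash inequality $\|u\|_{L^{2}(\nu)}^{2+4/d_{s}}\le C\,\mathcal{E}(u)\,\|u\|_{L^{1}(\nu)}^{4/d_{s}}$ on $\mathcal{F}(\mathbb{S}\setminus V_{0})$. The standard Nash--Moser argument -- apply this to $P_{t}f$ for $f\in L^{1}\cap L^{2}$, use $\phi'(t)=-2\mathcal{E}(P_{t}f)$ for $\phi(t)=\|P_{t}f\|_{L^{2}(\nu)}^{2}$, and combine with the $L^{1}$-contractivity of $P_{t}$ -- converts it into the ODE $\phi^{1+2/d_{s}}\le-C\phi'\,\|f\|_{L^{1}}^{4/d_{s}}$. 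Integration yields $\|P_{t}f\|_{L^{2}(\nu)}\le C\,t^{-d_{s}/4}\,\|f\|_{L^{1}(\nu)}$, and duality together with the semigroup identity $P_{t}=P_{t/2}\circ P_{t/2}$ upgrades this to the ultracontractive bound $\|P_{t}\|_{L^{1}\to L^{\infty}}\le C\,t^{-d_{s}/2}$, which is precisely the pointwise estimate on $p(t,x,y)$.

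\emph{Resolvent regularity.} Starting from $\rho_{\alpha}(x,z)-\rho_{\alpha}(y,z)=\int_{0}^{\infty}e^{-\alpha t}[p(t,x,z)-p(t,y,z)]\,dt$, I would feed $p(t,\cdot,z)\in\mathcal{F}(\mathbb{S})$ into \eqref{eq:-55} to obtain $|p(t,x,z)-p(t,y,z)|\le R(x,y)^{1/2}\,\mathcal{E}(p(t,\cdot,z))^{1/2}$. A standard semigroup computation identifies $\mathcal{E}(p(t,\cdot,z))$ with the negative time derivative of $p(2t,z,z)$; combined with the diagonal bound from the first part and monotonicity of $t\mapsto p(t,z,z)$, a Tauberian-style comparison produces $\mathcal{E}(p(t,\cdot,z))\le C\,t^{-d_{s}/2-1}$. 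Because $d_{s}<2$, the resulting integral $\int_{0}^{\infty}e^{-\alpha t}\,t^{-d_{s}/4-1/2}\,dt$ converges, which gives a preliminary Hölder estimate $|\rho_{\alpha}(x,z)-\rho_{\alpha}(y,z)|\le C_{\alpha}\,R(x,y)^{1/2}$.

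\emph{Bootstrap to Lipschitz in $R$.} The remaining step -- upgrading the exponent $1/2$ to $1$ -- is the main obstacle, since treating $\rho_{\alpha}(\cdot,z)$ merely as an $\mathcal{F}$-function gives only the preliminary rate. I would invoke the resolvent equation $(\alpha-\mathcal{L})\rho_{\alpha}(\cdot,z)=\delta_{z}$, which makes $\rho_{\alpha}(\cdot,z)$ weakly $\alpha$-harmonic off $z$, and then iterate the Hölder estimate over nested dyadic triangles $S_{i,m}$ using the self-similarity \eqref{eq:-62} and the harmonic-extension formula \eqref{eq:-30}: the harmonic component contributes a contraction factor $3/5$ to the Hölder constant at each descent, while the perturbation term $-\alpha\rho_{\alpha}$ is absorbed by the resulting geometric series. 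The pole at $z$ is handled by splitting the time integral at $t\asymp R(x,y)^{d_{w}/(d_{w}-1)}$, so that the short-time portion is controlled via the sub-Gaussian heat-kernel bound rather than by the on-diagonal estimate alone. Tracking the $\alpha$-dependence uniformly through this scale-by-scale iteration, and keeping it uniform near the singularity, is the main bookkeeping difficulty.
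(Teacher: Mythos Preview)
The paper does not prove Lemma~\ref{lem:-2} at all; it simply records the statement and cites Barlow--Perkins \cite[Theorems~1.5 and~1.8]{BP88}. So there is no in-paper argument to compare against, and I will just assess your proposal on its own.

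Your Nash-inequality derivation of the on-diagonal bound is correct and standard, and your H\"older-$\tfrac12$ estimate for $\rho_\alpha$ via \eqref{eq:-55} is fine as a first step.

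The gap is in your bootstrap. You correctly identify that applying \eqref{eq:-55} to $\rho_\alpha(\cdot,z)$ gives only exponent $\tfrac12$, but the self-similar Schauder-type iteration you propose is both unfinished (you call it ``the main bookkeeping difficulty'' and leave it there) and unnecessary. The Lipschitz bound is a two-line consequence of the fact that $\rho_\alpha$ is the \emph{reproducing kernel} for $\mathcal{E}_\alpha=\mathcal{E}+\alpha\langle\cdot,\cdot\rangle_\nu$, not merely a locally $\alpha$-harmonic function. Set $v=\rho_\alpha(\cdot,x)-\rho_\alpha(\cdot,y)\in\mathcal{F}(\mathbb{S}\setminus V_0)$. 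The reproducing identity $\mathcal{E}_\alpha(\rho_\alpha(\cdot,w),\phi)=\phi(w)$ with $\phi=v$ gives
\[
\mathcal{E}_\alpha(v)=v(x)-v(y)=\rho_\alpha(x,x)-2\rho_\alpha(x,y)+\rho_\alpha(y,y)\ge 0,
\]
and combining this with $|v(x)-v(y)|^{2}\le R(x,y)\,\mathcal{E}(v)\le R(x,y)\,\mathcal{E}_\alpha(v)$ yields $v(x)-v(y)\le R(x,y)$. Away from $\{x,y\}$ one has $(\alpha-\mathcal{L})v=0$ with $v|_{V_0}=0$; the probabilistic representation $v(z)=\mathbb{E}_z\bigl[e^{-\alpha T}v(X_T)\bigr]$, $T$ the hitting time of $V_0\cup\{x,y\}$, gives $|v(z)|\le\max(|v(x)|,|v(y)|)$. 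Since the same maximum principle forces $\rho_\alpha(w,w)=\max_{z}\rho_\alpha(w,z)$, we have $v(x)\ge 0\ge v(y)$, hence $|v(z)|\le v(x)-v(y)\le R(x,y)$. That is the Lipschitz bound, with a constant independent of $\alpha$. Note also that the only use of Lemma~\ref{lem:-2} in the paper (in the proof of Lemma~\ref{lem:-1}) is precisely the symmetric second difference $\rho_\alpha(x,x)-2\rho_\alpha(x,y)+\rho_\alpha(y,y)\le C_\alpha R(x,y)$, which is the displayed identity above.
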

In view of the joint continuity of $p(t,x,y)$, the definition below
is legitimate.
\begin{defn}
For any Radon measure $\lambda$ on $\mathbb{S}$, we define $P_{t}\lambda(x)=\int_{\mathbb{S}}p(t,x,y)\,\lambda(dy),\;x\in\mathbb{S},\;t\in(0,\infty)$.
\end{defn}
\begin{rem}
\label{rem:-7}(i) Let $\lambda$ be a Radon measure on $\mathbb{S}$.
By the symmetry of $p(t,\cdot,\cdot)$, it is easy to see that $\langle P_{t}(g\lambda),f\rangle_{\nu}=\langle g,P_{t}f\rangle_{\lambda}$
for all $f\in L^{2}(\nu),\,g\in L^{1}(\lambda)$.

\ (ii) For any Radon measure $\lambda$ on $\mathbb{S}$, we have
$P_{t}\lambda\in\mathrm{Dom}(\mathcal{L})$ for $t>0$. In fact, since
$p(t,x,y)\in C((0,\infty)\times\mathbb{S}\times\mathbb{S})$, we have
$P_{t/2}\lambda\in C(\mathbb{S})$, which implies that $P_{t}\lambda=P_{t/2}(P_{t/2}\lambda)\in\mathrm{Dom}(\mathcal{L})$.
Moreover, $P_{t}\lambda\in C^{1}(0,\infty;L^{2}(\nu))$ and $\frac{d}{dt}P_{t}\lambda=\mathcal{L}P_{t}\lambda$.

(iii) Notice that, due to the singularity between $\nu$ and $\mu$,
the contractivity $\Vert P_{t}(g\mu)\Vert_{L^{2}(\nu)}\le\Vert g\Vert_{L^{2}(\mu)},\,t>0$
is no longer valid in general. In fact, for $g\in L^{2}(\mu),\,g\not=0$,
we have
\[
\lim_{t\to0}\Vert P_{t}(g\mu)\Vert_{L^{2}(\nu)}=\infty.
\]
To see this, suppose contrarily that $\lim_{t\to0}\Vert P_{t}(g\mu)\Vert_{L^{2}(\nu)}=\sup_{t>0}\Vert P_{t}(g\mu)\Vert_{L^{2}(\nu)}<\infty$.
Then there exists a unique $g_{0}\in L^{2}(\nu)$ such that $\lim_{t\to0}P_{t}(g\mu)=g_{0}$
weakly in $L^{2}(\nu)$. On the other hand, for any $v\in\mathcal{F}(\mathbb{S}\backslash\mathrm{V}_{0})$,
we have 
\[
\langle g_{0},v\rangle_{\nu}=\lim_{t\to0}\langle P_{t}(g\mu),v\rangle_{\nu}=\lim_{t\to0}\langle g,P_{t}v\rangle_{\mu}=\langle g,v\rangle_{\mu},
\]
where the last equality follows from the uniform convergence $\lim_{t\to0}P_{t}v=v$
as a consequence of the convergence in $\mathcal{F}(\mathbb{S}\backslash\mathrm{V}_{0})$.
By the density of $\mathcal{F}(\mathbb{S}\backslash\mathrm{V}_{0})$
in $C(\mathbb{S})$, it is seen that $g\mu=g_{0}\nu$, which contradicts
the fact that $\nu$ and $\mu$ are mutually singular.
\end{rem}
To study the semi-linear parabolic PDEs \eqref{eq:}, let us first
investigate the formal integral
\begin{equation}
\int_{0}^{t}P_{t-s}(g(s)\mu)\,ds,\label{eq:-80}
\end{equation}
which is the formal solution to the equation $\partial_{t}u\,d\nu=\mathcal{L}u\,d\nu+g(t)\,d\mu$.
Since $P_{t}$ is not bounded from $L^{2}(\mu)$ to $L^{2}(\nu)$
(cf. Remark \ref{rem:-7}.(iii) above), there is a singularity in
the integrand of \eqref{eq:-80} at $s=t$. We shall show that \eqref{eq:-80}
is a well-defined function in the space $L^{\infty}(0,T;L^{2}(\nu))\cap L^{2}(0,T;\mathcal{F}(\mathbb{S}\backslash\mathrm{V}_{0}))$,
and is jointly Hölder continuous if $g(t)$ is uniformly bounded in
$L^{2}(\mu)$. To formulate the results, it is convenient to introduce
several definitions.
\begin{defn}
\label{def:-2}For any $v\in L^{2}(\nu)$, define
\[
\Vert v\Vert_{\mathcal{F}^{-1}}=\sup\big\{\langle u,v\rangle_{\nu}:u\in\mathcal{F}(\mathbb{S}),\,\Vert u\Vert_{\mathcal{F}}\le1\big\},
\]
where 
\[
\Vert u\Vert_{\mathcal{F}}=\big[\Vert u\Vert_{L^{2}(\nu)}^{2}+\mathcal{E}(u)\big]^{1/2}.
\]
The space $\mathcal{F}^{-1}(\mathbb{S})$ is defined to be the $\Vert\cdot\Vert_{\mathcal{F}^{-1}}$-completion
of $L^{2}(\nu)$.
\end{defn}
\begin{defn}
\label{def:-3}Let $u\in L^{2}(0,T;\mathcal{F}(\mathbb{S}))$; that
is, $\int_{0}^{T}\mathcal{E}_{1}(u(t))\,dt<\infty$. An  ($\mathcal{F}(\mathbb{S})$-valued)
function $u$ is said to have a\emph{ weak derivative }$\partial_{t}u$\emph{
}in\emph{ }$L^{2}(0,T;\mathcal{F}^{-1}(\mathbb{S}))$, if $\partial_{t}u$
is an $\mathcal{F}^{-1}(\mathbb{S})$-valued function on $[0,T]$
satisfying
\[
\Big(\int_{0}^{T}\Vert\partial_{t}u(t)\Vert_{\mathcal{F}^{-1}}^{2}\,dt\Big)^{1/2}<\infty\;\;\text{and}\;\int_{0}^{T}\langle u(t),\partial_{t}v(t)\rangle_{\nu}\,dt=-\int_{0}^{T}\langle\partial_{t}u(t),v(t)\rangle_{\nu}\,dt
\]
 for all $v\in C^{1}(0,T;\mathcal{F}(\mathbb{S}))$ with $v(0)=v(T)=0$.
\end{defn}
The following lemma can be easily shown by a mollifier argument similar
to that of \citep[Theorem 3, Section 5.9]{Eva10}.
\begin{lem}
\label{lem:}Suppose that $u\in L^{2}(0,T;\mathcal{F}(\mathbb{S}\backslash\mathrm{V}_{0}))$
has a weak derivative $\partial_{t}u\in L^{2}(0,T;\mathcal{F}^{-1}(\mathbb{S}))$.
Then

\smallskip{}

\noindent (a) $u\in C(0,T;L^{2}(\nu))$;

\smallskip{}

\noindent (b) The function $t\mapsto\Vert u(t)\Vert_{L^{2}(\nu)}^{2}$
is absolutely continuous, and $\frac{d}{dt}\Vert u(t)\Vert_{L^{2}(\nu)}^{2}=2\langle\partial_{t}u(t),u(t)\rangle_{\nu}$
for a.e. $t\in[0,T]$.
\end{lem}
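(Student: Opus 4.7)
The plan is to mimic the standard mollification argument of \citep[Theorem 3, Section 5.9]{Eva10}, with the only fractal-specific input being that the pairing $\langle\cdot,\cdot\rangle_\nu$ extends to a continuous duality between $\mathcal{F}(\mathbb{S})$ and $\mathcal{F}^{-1}(\mathbb{S})$ satisfying $|\langle w,v\rangle_\nu|\le\Vert w\Vert_{\mathcal{F}^{-1}}\Vert v\Vert_{\mathcal{F}}$, which is immediate from Definition \ref{def:-2}.

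First, I would extend $u$ to a slightly larger interval (by reflection about $t=0$ and $t=T$, or by a standard cut-off and zero extension) so that a time-mollification is well-defined on all of $[0,T]$. Let $\eta_\epsilon$ be a standard smooth mollifier on $\mathbb{R}$ and set $u^\epsilon(t)=(\eta_\epsilon*_t u)(t)$, interpreted as a Bochner integral of the $\mathcal{F}(\mathbb{S})$-valued function $u$. Standard mollifier estimates give $u^\epsilon\in C^\infty([0,T];\mathcal{F}(\mathbb{S}))$ together with the convergences
\[
u^\epsilon\to u\ \text{in}\ L^2(0,T;\mathcal{F}(\mathbb{S})),\quad \partial_t u^\epsilon=\eta_\epsilon*_t\partial_t u\to\partial_t u\ \text{in}\ L^2(0,T;\mathcal{F}^{-1}(\mathbb{S})),
\]
on any compact subinterval away from the endpoints, and on the full interval after the boundary extension.

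Because $u^\epsilon$ is smooth in $t$ with values in $\mathcal{F}(\mathbb{S})\subseteq L^2(\nu)$, the classical chain rule applies and
\[
\frac{d}{dt}\Vert u^\epsilon(t)\Vert_{L^2(\nu)}^2=2\langle\partial_t u^\epsilon(t),u^\epsilon(t)\rangle_\nu,
\]
so that for $0\le s\le t\le T$,
\[
\Vert u^\epsilon(t)\Vert_{L^2(\nu)}^2-\Vert u^\epsilon(s)\Vert_{L^2(\nu)}^2=2\int_s^t\langle\partial_t u^\epsilon(\tau),u^\epsilon(\tau)\rangle_\nu\,d\tau.
\]
The duality bound $|\langle\partial_t u^\epsilon,u^\epsilon\rangle_\nu|\le\Vert\partial_t u^\epsilon\Vert_{\mathcal{F}^{-1}}\Vert u^\epsilon\Vert_{\mathcal{F}}$ together with Cauchy--Schwarz in time shows the right-hand side converges, as $\epsilon\to0$, to $2\int_s^t\langle\partial_t u(\tau),u(\tau)\rangle_\nu\,d\tau$; while the left-hand side converges for a.e.\ $s,t$ because $\Vert u^\epsilon(\cdot)\Vert_{L^2(\nu)}^2\to\Vert u(\cdot)\Vert_{L^2(\nu)}^2$ in $L^1(0,T)$. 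This yields the integral identity
\[
\Vert u(t)\Vert_{L^2(\nu)}^2-\Vert u(s)\Vert_{L^2(\nu)}^2=2\int_s^t\langle\partial_t u(\tau),u(\tau)\rangle_\nu\,d\tau
\]
for a.e.\ $s,t\in[0,T]$, which is exactly the absolute continuity and differentiation formula in (b). Part (a) then follows: the identity extends $\Vert u(t)\Vert_{L^2(\nu)}^2$ to a continuous function of $t$, and weak $L^2(\nu)$-continuity of $u$ (a direct consequence of $u\in L^2(0,T;L^2(\nu))$ and $\partial_t u\in L^2(0,T;\mathcal{F}^{-1}(\mathbb{S}))$, via the duality with test functions $v\in\mathcal{F}(\mathbb{S})$) upgrades to strong continuity once norms converge.

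The main technical point to handle carefully is the behaviour near the endpoints: the mollified approximants are only smooth on a strictly interior interval, so one either argues on $[\delta,T-\delta]$ first and then lets $\delta\to0$ using the established integral identity, or one uses a reflection extension with the observation that $\partial_t u$ is preserved in the weak sense. The rest is a routine adaptation of the Euclidean argument, since the sole structural ingredient used beyond Hilbert-space duality is the continuous embedding $\mathcal{F}(\mathbb{S})\hookrightarrow L^2(\nu)\hookrightarrow\mathcal{F}^{-1}(\mathbb{S})$, which holds by construction.
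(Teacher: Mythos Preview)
Your proposal is correct and follows exactly the approach the paper indicates: the paper does not give a detailed proof but simply remarks that the lemma ``can be easily shown by a mollifier argument similar to that of \citep[Theorem 3, Section 5.9]{Eva10},'' which is precisely what you have outlined, including the same reference.
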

We derive properties of the convolution \eqref{eq:-80} in the following
lemmas.
\begin{lem}
\label{lem:-3}Let $g\in L^{2}(0,T;L^{2}(\mu))$. For each $\delta\in(0,T)$,
let
\begin{equation}
u_{\delta}(t)=\int_{0}^{(t-\delta)^{+}}P_{t-s}(g(s)\mu)\,ds,\;\;t\in[0,T].\label{eq:-37}
\end{equation}
Then $u_{\delta}\in L^{\infty}(0,T;L^{2}(\nu))\cap L^{2}(0,T;\mathcal{F}(\mathbb{S}\backslash\mathrm{V}_{0}))$
and

\[
\Vert u_{\delta}\Vert_{L^{\infty}(0,T;L^{2}(\nu))}^{2}+\int_{0}^{T}e^{C_{\epsilon}(T-s)}\mathcal{E}(u_{\delta}(s))\,ds\le\epsilon\,\int_{0}^{T}e^{C_{\epsilon}(T-s)}\Vert g(s)\Vert_{L^{2}(\mu)}^{2}\,ds,
\]
for any $\epsilon>0$, where $C_{\epsilon}>0$ is a constant depending
only on $\epsilon$. Moreover,
\[
\Vert\partial_{t}u_{\delta}\Vert_{L^{2}(0,T;\mathcal{F}^{-1})}\le C_{\ast}\,\Vert g\Vert_{L^{2}(0,T;L^{2}(\mu))}.
\]
\end{lem}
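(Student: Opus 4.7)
The plan is to exploit the fact that truncating the lower limit at $t-\delta$ removes the singularity at $s=t$ in the convolution, reducing the problem to standard semigroup calculus followed by the Sobolev estimate of Corollary~\ref{cor:}.

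First, for $t>\delta$ the integrand $P_{t-s}(g(s)\mu)$ is bounded in $L^{2}(\nu)$ uniformly in $s\in[0,t-\delta]$ by the on-diagonal heat kernel bound of Lemma~\ref{lem:-2}. Differentiating under the integral via Leibniz's rule I would obtain
\[
\partial_{t}u_{\delta}(t)=P_{\delta}\bigl(g(t-\delta)\mu\bigr)+\mathcal{L}u_{\delta}(t),\qquad t\in(\delta,T],
\]
with $u_{\delta}\equiv0$ on $[0,\delta]$, and $u_{\delta}(t)\in\mathrm{Dom}(\mathcal{L})\subset\mathcal{F}(\mathbb{S}\backslash\mathrm{V}_{0})$. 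Pairing this identity with $u_{\delta}(t)$ in $L^{2}(\nu)$ and invoking the symmetry in Remark~\ref{rem:-7}(i) yields the energy identity
\[
\tfrac{1}{2}\tfrac{d}{dt}\Vert u_{\delta}(t)\Vert_{L^{2}(\nu)}^{2}+\mathcal{E}(u_{\delta}(t))=\langle g(t-\delta),\,P_{\delta}u_{\delta}(t)\rangle_{\mu}.
\]

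The crux is to control the right-hand side by a small fraction of $\mathcal{E}(u_{\delta}(t))$ plus manageable remainders. Since $P_{\delta}$ preserves the Dirichlet condition at $\mathrm{V}_{0}$, one has $P_{\delta}u_{\delta}(t)\in\mathcal{F}(\mathbb{S}\backslash\mathrm{V}_{0})$, and the Sobolev inequality \eqref{eq:-25} combined with the contractivity of the killed semigroup on both $L^{2}(\nu)$ and $\bigl(\mathcal{E},\mathcal{F}(\mathbb{S}\backslash\mathrm{V}_{0})\bigr)$ gives $\Vert P_{\delta}u_{\delta}\Vert_{L^{2}(\mu)}^{2}\le C\,\mathcal{E}(u_{\delta})^{d_{s}-1}\Vert u_{\delta}\Vert_{L^{2}(\nu)}^{4-2d_{s}}$. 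Cauchy--Schwarz in $L^{2}(\mu)$ followed by two rounds of Young's inequality with a small parameter $\kappa>0$ --- exploiting the scaling identity $(d_{s}-1)+(2-d_{s})=1$ --- produces
\[
\langle g(t-\delta),P_{\delta}u_{\delta}\rangle_{\mu}\le\tfrac{1}{2}\mathcal{E}(u_{\delta})+C_{\kappa}\Vert u_{\delta}\Vert_{L^{2}(\nu)}^{2}+\kappa\,\Vert g(t-\delta)\Vert_{L^{2}(\mu)}^{2}.
\]
Absorbing the $\tfrac{1}{2}\mathcal{E}(u_{\delta})$ back on the left and multiplying through by the integrating factor $e^{-C_{\epsilon}t}$ with $C_{\epsilon}=2C_{\kappa}$, integrating on $[0,t^{*}]$ for arbitrary $t^{*}\in[0,T]$ (using $u_{\delta}(0)=0$), and then rescaling by $e^{C_{\epsilon}T}$, simultaneously delivers the $L^{\infty}_{t}L^{2}_{\nu}$ bound and the weighted $\mathcal{E}$ bound with $\epsilon$ proportional to $\kappa$; the $\delta$-shift in the argument of $g$ is harmless after extending $g$ trivially outside $[0,T]$.

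For the $\mathcal{F}^{-1}$ bound on $\partial_{t}u_{\delta}$ I would test the representation of Step one against $v\in\mathcal{F}(\mathbb{S})$ with $\Vert v\Vert_{\mathcal{F}}\le 1$. Splitting $v$ into its harmonic extension of $v|_{\mathrm{V}_0}$ plus a piece in $\mathcal{F}(\mathbb{S}\backslash\mathrm{V}_{0})$ (each component controlled by $\Vert v\Vert_{\mathcal{F}}$ via \eqref{eq:-14}), the $\mathcal{L}$-contribution is controlled by $\mathcal{E}(u_{\delta})^{1/2}\Vert v\Vert_{\mathcal{F}}$, and the $P_{\delta}$-term by $C\,\Vert g(t-\delta)\Vert_{L^{2}(\mu)}\Vert v\Vert_{\mathcal{F}}$ via Corollary~\ref{cor:} applied to $P_{\delta}v\in\mathcal{F}(\mathbb{S}\backslash\mathrm{V}_{0})$. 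Squaring, integrating in $t$, and invoking the energy estimate just obtained yields the desired $\Vert\partial_{t}u_{\delta}\Vert_{L^{2}(0,T;\mathcal{F}^{-1})}\le C_{\ast}\Vert g\Vert_{L^{2}(0,T;L^{2}(\mu))}$. The main obstacle throughout is the absorption step: because $\nu$ and $\mu$ are mutually singular there is no direct $L^{2}$ comparison, and the Sobolev inequality of Section~\ref{sec:} is essential --- the trade-off it imposes between a small $\mathcal{E}$-fraction and a divergent $\Vert\cdot\Vert_{L^{2}(\nu)}^{2}$ coefficient as $\kappa\downarrow 0$ is precisely what forces the Grönwall weight $e^{C_{\epsilon}(T-s)}$ appearing in the statement rather than a naive pointwise bound.
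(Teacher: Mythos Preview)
Your proposal is correct and follows essentially the same route as the paper: derive $\partial_{t}u_{\delta}=\mathcal{L}u_{\delta}+P_{\delta}(g(t-\delta)\mu)$, test against $u_{\delta}$, transfer the pairing to $\langle g(t-\delta),P_{\delta}u_{\delta}\rangle_{\mu}$ via symmetry, invoke \eqref{eq:-25} together with the contractivity of $P_{\delta}$ on $L^{2}(\nu)$ and on $\mathcal{E}$, absorb with Young's inequality, and close by Gr\"onwall. The only cosmetic differences are that the paper bounds $\Vert P_{\delta}v\Vert_{L^{2}(\mu)}$ in the $\mathcal{F}^{-1}$ step by the cruder Morrey-type estimate \eqref{eq:-14} rather than the full \eqref{eq:-25}, and reduces from $v\in\mathcal{F}(\mathbb{S})$ to $v\in\mathcal{F}(\mathbb{S}\backslash\mathrm{V}_{0})$ via the $\mathcal{E}_{1}$-orthogonal projection instead of your harmonic/Dirichlet splitting; both devices serve the same purpose.
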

\begin{proof}
It is convenient to set $g(t)=0$ for $t<0$. Clearly, $u_{\delta}(t)\in\mathrm{Dom}(\mathcal{L}),\;t\in[0,T]$.
For each $s\in(0,T)$, since $t\mapsto P_{t-s}(g(s)\mu)=P_{t-s-\delta}[P_{\delta}(g(s)\mu)],\;t\in(s+\delta,T)$
is a continuously differentiable $L^{2}(\nu)$-valued function, we
see that $u_{\delta}\in C^{1}(\delta,T;L^{2}(\nu))$ and
\begin{equation}
\partial_{t}u_{\delta}(t)=P_{\delta}(g(t-\delta)\mu)+\int_{0}^{t-\delta}\mathcal{L}P_{t-s}(g(s)\mu)\,ds=\mathcal{L}u_{\delta}(t)+P_{\delta}(g(t-\delta)\mu).\label{eq:-34}
\end{equation}
For any $\epsilon>0$ and each $t\in(0,T)$, testing \eqref{eq:-34}
against $u_{\delta}$ and applying Corollary \ref{cor:} gives that
\[
\begin{aligned}\frac{1}{2}\frac{d}{dt}\Vert u_{\delta}(t)\Vert_{L^{2}(\nu)}^{2} & =\langle P_{\delta}(g(t-\delta)\mu),u_{\delta}(t)\rangle_{\nu}-\mathcal{E}(u_{\delta}(t))\\
\vphantom{\frac{1}{2}} & =\langle g(t-\delta),P_{\delta}(u_{\delta}(t))\rangle_{\mu}-\mathcal{E}(u_{\delta}(t))\\
\vphantom{\frac{1}{2}} & \le C_{\epsilon}\,\mathcal{E}[P_{\delta}(u_{\delta}(t))]^{d_{s}-1}\Vert P_{\delta}(u_{\delta}(t))\Vert_{L^{2}(\nu)}^{2(2-d_{s})}-\mathcal{E}(u_{\delta}(t))+\epsilon\Vert g(t-\delta)\Vert_{L^{2}(\mu)}^{2}\\
\vphantom{\frac{1}{2}} & \le C_{\epsilon}\,\mathcal{E}(u_{\delta}(t))^{d_{s}-1}\Vert u_{\delta}(t)\Vert_{L^{2}(\nu)}^{2(2-d_{s})}-\mathcal{E}(u_{\delta}(t))+\epsilon\Vert g(t-\delta)\Vert_{L^{2}(\mu)}^{2}\\
\vphantom{\frac{1}{2}} & \le C_{\epsilon}\,\Vert u_{\delta}(t)\Vert_{L^{2}(\nu)}^{2}-\frac{1}{2}\mathcal{E}(u_{\delta}(t))+\epsilon\Vert g(t-\delta)\Vert_{L^{2}(\mu)}^{2},
\end{aligned}
\]
where $C_{\epsilon}>0$ denotes a generic constant depending only
on $\epsilon$ which may vary on different occasions. By Grönwall's
inequality and the fact that $u_{\delta}(t)=0,\,t\in[0,\delta]$,
we deduce
\begin{equation}
\Vert u_{\delta}(t)\Vert_{L^{2}(\nu)}^{2}+\int_{0}^{t}e^{C_{\epsilon}(t-s)}\mathcal{E}(u_{\delta}(s))\,ds\le\epsilon\,\int_{0}^{(t-\delta)^{+}}e^{C_{\epsilon}(t-s)}\Vert g(s)\Vert_{L^{2}(\mu)}^{2}\,ds.\label{eq:-33}
\end{equation}
By \eqref{eq:-34} again, for any $v\in\mathcal{F}(\mathbb{S}\backslash\mathrm{V}_{0})$,
\[
\begin{aligned}\vert\langle\partial_{t}u_{\delta}(t),v\rangle_{\nu}\vert & \le\big|\langle g(t-\delta),P_{\delta}v\rangle_{\mu}\big|+\mathcal{E}(u_{\delta}(t))^{1/2}\mathcal{E}(v)^{1/2}\\
 & \le C_{\ast}\,\Vert g(t-\delta)\Vert_{L^{2}(\mu)}\mathcal{E}(P_{\delta}v)^{1/2}+\mathcal{E}(u_{\delta}(t))^{1/2}\mathcal{E}(v)^{1/2}\\
 & \le C_{\ast}\,\big[\Vert g(t-\delta)\Vert_{L^{2}(\mu)}+\mathcal{E}(u_{\delta}(t))^{1/2}\big]\Vert v\Vert_{\mathcal{F}}.
\end{aligned}
\]
The above inequality also holds for $v\in\mathcal{F}(\mathbb{S})$.
This can be seen by considering the $\mathcal{F}$-orthogonal projection
of $v$ on $\mathcal{F}(\mathbb{S}\backslash\mathrm{V}_{0})$. Therefore,
\[
\Vert\partial_{t}u_{\delta}(t)\Vert_{\mathcal{F}^{-1}}\le C_{\ast}\,\big[\Vert g(t-\delta)\Vert_{L^{2}(\mu)}+\mathcal{E}(u_{\delta}(t))^{1/2}\big],\;\;t\in(\delta,T],
\]
which, together with \eqref{eq:-33}, implies the desired estimate
for $\Vert\partial_{t}u_{\delta}\Vert_{L^{2}(0,T;\mathcal{F}^{-1})}$.
\end{proof}
\begin{lem}
\label{lem:-4}The limit
\begin{equation}
u(t)=\lim_{\delta\to0}\int_{0}^{(t-\delta)^{+}}P_{t-s}(g(s)\mu)\,ds,\label{eq:-35}
\end{equation}
 exists with respect to the norm $\Vert\cdot\Vert_{L^{\infty}(0,T;L^{2}(\nu))}+\Vert\cdot\Vert_{L^{2}(0,T;\mathcal{F})}$,
and satisfies
\[
\Vert u\Vert_{L^{\infty}(0,T;L^{2}(\nu))}^{2}+\int_{0}^{T}e^{C_{\epsilon}(T-s)}\mathcal{E}(u(s))\,ds\le\epsilon\,\int_{0}^{T}e^{C_{\epsilon}(T-s)}\Vert g(s)\Vert_{L^{2}(\mu)}^{2}\,ds.
\]
Moreover, $u(t)$ has a weak derivative $\partial_{t}u$ in $L^{2}(0,T;\mathcal{F}^{-1})$,
and
\[
\Vert\partial_{t}u\Vert_{L^{2}(0,T;\mathcal{F}^{-1})}\le C_{\ast}\,\Vert g\Vert_{L^{2}(0,T;L^{2}(\mu))}.
\]
\end{lem}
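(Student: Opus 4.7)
The plan is to realize $u$ as the strong limit in $L^{\infty}(0,T;L^{2}(\nu))\cap L^{2}(0,T;\mathcal{F})$ of the mollifications $u_{\delta}$ from Lemma \ref{lem:-3}, and then to pass to the limit in the estimates already established there. First I would record the explicit identity for the difference: extending $g$ by zero for $s<0$ and changing variables $r=t-s$, one finds for any $0<\delta'<\delta$
\[
u_{\delta}(t)-u_{\delta'}(t)=-\int_{\delta'}^{\delta}P_{r}(g(t-r)\mu)\,dr,\quad t\in[0,T].
\]

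\medskip

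The main analytic input consists of two pointwise heat-kernel bounds for $h\in L^{2}(\mu)$. Using the on-diagonal bound $p(r,x,y)\le C_{\ast}r^{-d_{s}/2}$ from Lemma \ref{lem:-2} together with $\int p(r,x,y)\,\nu(dx)\le 1$ and Cauchy\textendash Schwarz in $\mu$, one obtains
\[
\Vert P_{r}(h\mu)\Vert_{L^{2}(\nu)}\le C\,r^{-d_{s}/4}\,\Vert h\Vert_{L^{2}(\mu)}.
\]
Writing $P_{r}(h\mu)=P_{r/2}(P_{r/2}(h\mu))$ and combining this with the standard semigroup bound $\mathcal{E}(P_{r/2}f)^{1/2}\le C\,r^{-1/2}\Vert f\Vert_{L^{2}(\nu)}$ (from differentiating $\Vert P_{s}f\Vert_{L^{2}(\nu)}^{2}$) yields
\[
\mathcal{E}(P_{r}(h\mu))^{1/2}\le C\,r^{-1/2-d_{s}/4}\,\Vert h\Vert_{L^{2}(\mu)}.
\]
Since $d_{s}=2\log 3/\log 5<2$, both exponents $d_{s}/2$ and $1/2+d_{s}/4$ are strictly below $1$, so the singularities at $r=0$ are integrable. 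Applying the first bound pointwise in $t$ followed by Cauchy\textendash Schwarz in $r$ gives
\[
\sup_{t\in[0,T]}\Vert u_{\delta}(t)-u_{\delta'}(t)\Vert_{L^{2}(\nu)}\le C\Big(\int_{\delta'}^{\delta}r^{-d_{s}/2}\,dr\Big)^{1/2}\Vert g\Vert_{L^{2}(0,T;L^{2}(\mu))},
\]
while applying the second in tandem with Minkowski's integral inequality and Fubini in $(r,t)$ gives
\[
\Big(\int_{0}^{T}\mathcal{E}(u_{\delta}(t)-u_{\delta'}(t))\,dt\Big)^{1/2}\le C\int_{\delta'}^{\delta}r^{-1/2-d_{s}/4}\,dr\cdot\Vert g\Vert_{L^{2}(0,T;L^{2}(\mu))}.
\]
Both right-hand sides vanish as $\delta,\delta'\to 0$, yielding the Cauchy property in the combined norm.

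\medskip

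To conclude, I would pass to the limit in the estimates. The energy bound for $u$ inherits directly from the analogous bound for $u_{\delta}$ in Lemma \ref{lem:-3}, since the right-hand side is independent of $\delta$ and the strong convergence in $L^{\infty}(0,T;L^{2}(\nu))\cap L^{2}(0,T;\mathcal{F})$ passes it through. For the weak derivative, Lemma \ref{lem:-3} ensures $\{\partial_{t}u_{\delta}\}$ is uniformly bounded in $L^{2}(0,T;\mathcal{F}^{-1})$, so a subsequence converges weakly to some $\xi\in L^{2}(0,T;\mathcal{F}^{-1})$; passing to the limit in $\int_{0}^{T}\langle\partial_{t}u_{\delta},v\rangle_{\nu}\,dt=-\int_{0}^{T}\langle u_{\delta},\partial_{t}v\rangle_{\nu}\,dt$ for test functions $v\in C^{1}(0,T;\mathcal{F})$ with $v(0)=v(T)=0$, where the right-hand side converges thanks to $L^{\infty}(0,T;L^{2}(\nu))$ convergence of $u_{\delta}$, identifies $\xi=\partial_{t}u$ in the sense of Definition \ref{def:-3}; the norm bound then follows by weak lower semicontinuity. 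The technical crux of the argument is the singularity of the integrand in \eqref{eq:-35} at $s=t$: the whole scheme rests on the integrability near zero of the kernel exponents $r^{-d_{s}/4}$ and $r^{-1/2-d_{s}/4}$, which holds precisely because the spectral dimension of the gasket satisfies $d_{s}<2$.
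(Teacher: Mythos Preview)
Your argument is correct, and it takes a genuinely different route from the paper's proof. The paper works at the level of the PDE: it writes the equation satisfied by $w=u_{\delta}-u_{\delta'}$ (from \eqref{eq:-34}), tests it against $w$, and controls the two source terms $P_{\delta}[(g(\cdot-\delta)-g(\cdot-\delta'))\mu]$ and $(P_{\delta}-P_{\delta'})(g(\cdot-\delta')\mu)$ using the Sobolev inequality \eqref{eq:-25} and a spectral-calculus bound $\Vert(P_{\delta}-P_{\delta'})w\Vert_{L^{2}(\nu)}^{2}\le|\delta-\delta'|\,\mathcal{E}(w)$. Their Cauchy estimate therefore involves the continuity of translations $\int_{0}^{T}\Vert g(\cdot-\delta)-g(\cdot-\delta')\Vert_{L^{2}(\mu)}^{2}dt\to 0$, together with a factor $|\delta-\delta'|^{2-d_{s}}$.

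Your approach bypasses the energy method entirely: the change of variables gives the clean identity $u_{\delta}-u_{\delta'}=-\int_{\delta'}^{\delta}P_{r}(g(\cdot-r)\mu)\,dr$, and the two smoothing bounds $\Vert P_{r}(h\mu)\Vert_{L^{2}(\nu)}\le Cr^{-d_{s}/4}\Vert h\Vert_{L^{2}(\mu)}$ and $\mathcal{E}(P_{r}(h\mu))^{1/2}\le Cr^{-1/2-d_{s}/4}\Vert h\Vert_{L^{2}(\mu)}$ (both of which you derive correctly from the on-diagonal bound in Lemma~\ref{lem:-2} and the sub-Markov property) make the Cauchy property a direct consequence of the integrability of $r^{-d_{s}/2}$ and $r^{-1/2-d_{s}/4}$ near $r=0$. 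This is more elementary---it does not invoke the Sobolev inequality of Section~\ref{sec:} at all---and it yields explicit quantitative rates for $\Vert u_{\delta}-u_{\delta'}\Vert$ in terms of $\delta,\delta'$ alone, without appealing to $L^{2}$-continuity of translations of $g$. The paper's route, on the other hand, stays closer to the variational structure and would adapt more readily to settings where only the Sobolev-type inequality is available rather than a pointwise heat-kernel upper bound. Your handling of the weak derivative by uniform boundedness in $L^{2}(0,T;\mathcal{F}^{-1})$, weak compactness, and identification of the limit via Definition~\ref{def:-3} is standard and correct.
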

\begin{proof}
As before, we set $g(t)=0$ for $t<0$. Let $\delta,\delta^{\prime}\in(0,T)$
and $w=u_{\delta}-u_{\delta^{\prime}}$, where $u_{\delta}$ are the
functions defined by \eqref{eq:-37}. By \eqref{eq:-34}, we have
\[
\partial_{t}w=\mathcal{L}w+P_{\delta}[(g(t-\delta)-g(t-\delta^{\prime}))\mu]+(P_{\delta}-P_{\delta^{\prime}})(g(t-\delta^{\prime})\mu),
\]
from which it follows that
\begin{equation}
\begin{aligned}\frac{1}{2}\frac{d}{dt}\Vert w(t)\Vert_{L^{2}(\nu)}^{2} & =-\mathcal{E}(w(t))+\langle P_{\delta}[(g(t-\delta)-g(t-\delta^{\prime}))\mu],w(t)\rangle_{\nu}\\
 & \quad+\langle(P_{\delta}-P_{\delta^{\prime}})(g(t-\delta^{\prime})\mu),w(t)\rangle_{\nu}.
\end{aligned}
\label{eq:-31}
\end{equation}
The first term on the right hand side of \eqref{eq:-31} can be estimated
in the same way as in the proof of Lemma \ref{lem:-3}, which yields
that
\[
\langle P_{\delta}[(g(t-\delta)-g(t-\delta^{\prime}))\mu],w(t)\rangle_{\nu}\le\frac{1}{2}\Vert g(t-\delta)-g(t-\delta^{\prime})\Vert_{L^{2}(\mu)}^{2}+\frac{1}{2}\mathcal{E}(w(t))^{d_{s}-1}\Vert w(t)\Vert_{L^{2}(\nu)}^{2(2-d_{s})}.
\]
For the second term on the right hand side of \eqref{eq:-31}, we
have
\[
\langle(P_{\delta}-P_{\delta^{\prime}})(g(t-\delta^{\prime})\mu),w(t)\rangle_{\nu}\le C_{\ast}\mathcal{E}((P_{\delta}-P_{\delta^{\prime}})w(t))^{(d_{s}-1)/2}\Vert(P_{\delta}-P_{\delta^{\prime}})w(t)\Vert_{L^{2}(\nu)}^{2-d_{s}}\Vert g(t-\delta^{\prime})\Vert_{L^{2}(\mu)}.
\]
By the spectral decomposition,
\[
\begin{aligned}\Vert(P_{\delta}-P_{\delta^{\prime}})w(t)\Vert_{L^{2}(\nu)}^{2} & =\int_{0}^{\infty}(e^{-\lambda\delta}-e^{-\lambda\delta^{\prime}})^{2}d\Vert E_{\lambda}w(t)\Vert_{L^{2}(\nu)}^{2}\\
 & \le\int_{0}^{\infty}(1-e^{-\lambda|\delta-\delta^{\prime}|})^{2}d\Vert E_{\lambda}w(t)\Vert_{L^{2}(\nu)}^{2}\\
 & \le|\delta-\delta^{\prime}|\int_{0}^{\infty}\lambda d\Vert E_{\lambda}w(t)\Vert_{L^{2}(\nu)}^{2}\\
\vphantom{\int} & =|\delta-\delta^{\prime}|\mathcal{E}(w(t)),
\end{aligned}
\]
which, together with the fact that $\mathcal{E}((P_{\delta}-P_{\delta^{\prime}})w(t))\le\mathcal{E}(w(t))$,
implies that
\[
\langle(P_{\delta}-P_{\delta^{\prime}})(g(t-\delta^{\prime})\mu),w(t)\rangle_{\nu}\le C_{\ast}|\delta-\delta^{\prime}|^{1-d_{s}/2}\mathcal{E}(w(t))^{1/2}\Vert g(t-\delta^{\prime})\Vert_{L^{2}(\mu)}.
\]
Therefore, we deduce from \eqref{eq:-31} that
\[
\begin{aligned}\frac{d}{dt}\Vert w(t)\Vert_{L^{2}(\nu)}^{2} & \le-\mathcal{E}(w(t))+C_{\ast}\Vert w(t)\Vert_{L^{2}(\nu)}^{2}+C_{\ast}\Vert g(t-\delta)-g(t-\delta^{\prime})\Vert_{L^{2}(\mu)}^{2}\\
 & \quad+C_{\ast}|\delta-\delta^{\prime}|^{2-d_{s}}\Vert g(t-\delta^{\prime})\Vert_{L^{2}(\mu)}^{2}.
\end{aligned}
\]
It follows from the above inequality and Grönwall's inequality that
\[
\begin{aligned}\Vert w\Vert_{L^{\infty}(0,T;L^{2}(\nu))}+\Vert w\Vert_{L^{2}(0,T;\mathcal{F})} & \le C_{\ast}\,\Big[|\delta-\delta^{\prime}|^{2-d_{s}}\Vert g\Vert_{L^{2}(0,T;L^{2}(\mu))}\\
 & \quad+\int_{0}^{T}\Vert g(t-\delta)-g(t-\delta^{\prime})\Vert_{L^{2}(\mu)}^{2}dt\Big].
\end{aligned}
\]
Therefore, $\{u_{\delta}\}$ is a Cauchy sequence with respect to
the norm $\Vert\cdot\Vert_{L^{\infty}(0,T;L^{2}(\nu))}+\Vert\cdot\Vert_{L^{2}(0,T;\mathcal{F})}$,
which proves the convergence of \eqref{eq:-35}. Moreover, the desired
estimates for $u$ follows readily from the similar estimates for
$u_{\delta}$.
\end{proof}
\begin{defn}
\label{def:-1}By virtue of Lemma \ref{lem:-4}, the convolution $\int_{0}^{t}P_{t-s}(g(s)\mu)\,ds$
can be defined to be the limit in \eqref{eq:-35}.
\end{defn}
\begin{lem}
\label{lem:-1}If $g\in L^{\infty}(0,T;L^{2}(\mu))$, then the convolution
$u$ defined by \eqref{eq:-35} is jointly continuous in $[0,T]\times\mathbb{S}$.
Moreover, for any $0<\theta<\frac{3}{2}(1-d_{s}/2)$,
\begin{equation}
|u(t,x)-u(s,y)|\le\Vert g\Vert_{L^{\infty}(0,T;L^{2}(\mu))}\big[C_{\theta}|t-s|^{\theta}+C_{T}\,R(x,y)^{1/2}\big],\label{eq:-65}
\end{equation}
where $C_{\theta}>0$ is a constant depending only on $\theta$, and
$C_{T}>0$ one depending only on $T$.
\end{lem}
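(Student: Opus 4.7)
The plan is to derive the Hölder estimate \eqref{eq:-65} by separately controlling spatial and temporal oscillations of
\[
u(t,x)=\int_{0}^{t}\int_{\mathbb{S}}p(t-r,x,z)\,g(r,z)\,\mu(dz)\,dr,
\]
then combining via $|u(t,x)-u(s,y)|\le|u(t,x)-u(t,y)|+|u(t,y)-u(s,y)|$; joint continuity follows as an immediate corollary. The three key ingredients are the resistance-metric Hölder bound \eqref{eq:-55} (used to extract the factor $R(x,y)^{1/2}$), the $L^{2}(\mu)$--$L^{2}(\nu)$--$\mathcal{E}$ interpolation \eqref{eq:-25} from Corollary~\ref{cor:} (used to convert spectral $L^{2}(\nu)/\mathcal{E}$ estimates into $L^{2}(\mu)$ estimates), and the spectral bound $\lambda e^{-\lambda\tau}\le(e\tau)^{-1}$ applied to the heat semigroup $P_{\tau}$; together they exploit the gasket inequality $d_{s}<2$.

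For the spatial estimate, Cauchy--Schwarz in $L^{2}(\mu)$ gives
\[
|u(t,x)-u(t,y)|\le\|g\|_{L^{\infty}(0,T;L^{2}(\mu))}\int_{0}^{t}\|p(\tau,x,\cdot)-p(\tau,y,\cdot)\|_{L^{2}(\mu)}\,d\tau.
\]
Applying \eqref{eq:-55} pointwise in $z$ to the function $p(\tau,\cdot,z)\in\mathcal{F}(\mathbb{S}\backslash\mathrm{V}_{0})$ produces $|p(\tau,x,z)-p(\tau,y,z)|^{2}\le R(x,y)\,\mathcal{E}(p(\tau,\cdot,z))$; the spectral identity $\mathcal{E}(P_{\tau/2}\phi)\le(e\tau)^{-1}\|\phi\|_{L^{2}(\nu)}^{2}$ with $\phi=p(\tau/2,\cdot,z)$ (whose $L^2(\nu)$-norm squared equals $p(\tau,z,z)\le C\tau^{-d_{s}/2}$ by Lemma~\ref{lem:-2}) then gives the uniform bound $\mathcal{E}(p(\tau,\cdot,z))\le C\tau^{-1-d_{s}/2}$. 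Since $d_{s}<2$, the integral $\int_{0}^{T}\tau^{-1/2-d_{s}/4}\,d\tau$ converges and yields the spatial piece $C_{T}\|g\|_{L^{\infty}(0,T;L^{2}(\mu))}\,R(x,y)^{1/2}$.

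For the temporal estimate with $0\le s<t\le T$, I would decompose
\[
u(t)-u(s)=\int_{s}^{t}P_{t-r}(g(r)\mu)\,dr+\int_{0}^{s}[P_{t-r}-P_{s-r}](g(r)\mu)\,dr.
\]
For the first summand Cauchy--Schwarz reduces the bound to $\int_{0}^{t-s}\|p(\tau,x,\cdot)\|_{L^{2}(\mu)}\,d\tau$; feeding $\|p(\tau,x,\cdot)\|_{L^{2}(\nu)}^{2}=p(2\tau,x,x)\le C\tau^{-d_{s}/2}$ together with $\mathcal{E}(p(\tau,x,\cdot))\le C\tau^{-1-d_{s}/2}$ into Corollary~\ref{cor:} gives $\|p(\tau,x,\cdot)\|_{L^{2}(\mu)}\le C\tau^{-(3d_{s}-2)/4}$, whose integral is $O((t-s)^{3(2-d_{s})/4})$. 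For the second summand, the elementary inequality $(1-e^{-\lambda h})^{2}\le(\lambda h)^{2\beta}$ applied to the spectral expansion with $h=t-s$, $\tau_{2}=s-r$, yields
\[
\|p(t-r,x,\cdot)-p(s-r,x,\cdot)\|_{L^{2}(\nu)}^{2}\le C_{\beta}h^{2\beta}(s-r)^{-2\beta-d_{s}/2},
\]
\[
\mathcal{E}\bigl(p(t-r,x,\cdot)-p(s-r,x,\cdot)\bigr)\le C_{\beta}h^{2\beta}(s-r)^{-1-2\beta-d_{s}/2},
\]
and combining these through Corollary~\ref{cor:} produces $\|p(t-r,x,\cdot)-p(s-r,x,\cdot)\|_{L^{2}(\mu)}\le C_{\beta}(t-s)^{\beta}(s-r)^{-(3d_{s}-2)/4-\beta}$. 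Integrating in $r\in[0,s]$ is finite precisely when $\beta<3(2-d_{s})/4=\tfrac{3}{2}(1-d_{s}/2)$, giving the temporal piece $C_{\theta}\|g\|_{L^{\infty}(0,T;L^{2}(\mu))}(t-s)^{\theta}$ for any admissible $\theta$.

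The principal obstacle is the second temporal summand, where the Sobolev interpolation \eqref{eq:-25} must be applied to a function that is simultaneously small in $h=t-s$ and singular as $r\to s$; keeping track of both dependencies through the $(d_{s}-1)/2$ and $2-d_{s}$ exponents yields the critical rate $(3d_{s}-2)/4$, which is exactly the obstruction that caps the admissible Hölder exponent at $\tfrac{3}{2}(1-d_{s}/2)$ and forces the strict inequality in the statement. All other steps are routine integrations once these heat-kernel spectral bounds are in place.
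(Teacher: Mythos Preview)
Your proof is correct, and for the temporal estimate it follows essentially the same route as the paper: the same splitting into a short-time integral $\int_{s}^{t}$ and a semigroup-difference integral $\int_{0}^{s}$, the same use of the Sobolev interpolation \eqref{eq:-25} to pass from $L^{2}(\nu)$/energy bounds to $L^{2}(\mu)$, and the same spectral inequality $(1-e^{-\lambda h})^{2}\le(\lambda h)^{2\beta}$ leading to the critical exponent $\tfrac{3}{2}(1-d_{s}/2)$.

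Where you genuinely diverge is the spatial estimate. The paper applies \eqref{eq:-25} to the \emph{difference} $p_{s,x}-p_{s,y}$, then uses H\"older in $s$ and finally invokes the Lipschitz bound for the resolvent kernel $\rho_{\alpha}$ from Lemma~\ref{lem:-2} to extract $R(x,y)$ via $\int_{0}^{t}\Vert p_{s/2,x}-p_{s/2,y}\Vert_{L^{2}(\nu)}^{2}\,ds\le C_{\alpha}R(x,y)$. You instead apply the resistance estimate \eqref{eq:-55} pointwise in $z$ to $p(\tau,\cdot,z)\in\mathcal{F}(\mathbb{S}\backslash\mathrm{V}_{0})$, pull out $R(x,y)^{1/2}$ immediately, and integrate the uniform bound $\mathcal{E}(p(\tau,\cdot,z))^{1/2}\le C\tau^{-1/2-d_{s}/4}$ over $\tau\in[0,T]$ (convergent since $d_{s}<2$). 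Your route is shorter and more elementary---it bypasses both the resolvent-kernel regularity and the Sobolev inequality for the spatial piece---while the paper's argument is slightly sharper in principle, since it tracks how the $L^{2}(\nu)$ smallness of $p_{s,x}-p_{s,y}$ (not just a uniform energy bound) feeds into the $L^{2}(\mu)$ estimate; for the present statement, however, both yield the same $R(x,y)^{1/2}$ rate with a $T$-dependent constant.
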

\begin{rem}
\label{rem:-6}The authors believe that $1/2$ is the correct Hölder
exponent in $x\in\mathbb{S}$ for \eqref{eq:-35} in general, which
is suggested by the fact that a generic $u\in\mathcal{F}(\mathbb{S})$
has only $\frac{1}{2}$-Hölder continuity (cf. \eqref{eq:-55}). As
a matter of fact, the convolution \eqref{eq:-35} only has mild regularity
in general due to the singularity between $\nu$ and $\mu$ (cf. Remark
\ref{rem:-8}.(ii) below).
\end{rem}
\begin{proof}
Let $g(t)=0$ for $t<0$. We first show that
\begin{equation}
|u(t,x)-u(t,y)|\le C_{T}R(x,y)^{1/2},\;\;x,y\in\mathbb{S},\label{eq:-67}
\end{equation}
where $C_{T}>0$ is a constant depending only on $T$. Denote $p_{s,x}(y)=p(s,x,y)$.
By the definition of $u(t)$, we have
\begin{equation}
\begin{aligned}\begin{aligned}|u(t,x)-u(t,y)|\end{aligned}
 & =\Big|\int_{0}^{t}\langle g(t-s),p_{s,x}-p_{s,y}\rangle_{\mu}\,ds\Big|\\
 & \le\Vert g\Vert_{L^{\infty}(0,T,L^{2}(\mu))}\,\int_{0}^{t}\Vert p_{s,x}-p_{s,y}\Vert_{L^{2}(\mu)}\,ds.
\end{aligned}
\label{eq:-32}
\end{equation}
By the Sobolev inequality \eqref{eq:-25},
\[
\Vert p_{s,x}-p_{s,y}\Vert_{L^{2}(\mu)}\le C\,\mathcal{E}(p_{s,x}-p_{s,y})^{(d_{s}-1)/2}\Vert p_{s,x}-p_{s,y}\Vert_{L^{2}(\nu)}^{2-d_{s}}.
\]
Let $-\mathcal{L}=\int_{0}^{\infty}\lambda\,dE_{\lambda}$ be the
spectral representation. Then
\begin{equation}
\begin{aligned}\mathcal{E}(p_{s,x}-p_{s,y}) & =\mathcal{E}(P_{s/2}(p_{s/2,x}-p_{s/2,y}))\\
 & =\int_{0}^{\infty}\lambda e^{-\lambda s}\,d\Vert E_{\lambda}(p_{s/2,x}-p_{s/2,y})\Vert_{L^{2}(\nu)}^{2}\\
\vphantom{\int_{0}^{\infty}} & \le s^{-1}\Vert p_{s/2,x}-p_{s/2,y}\Vert_{L^{2}(\nu)}^{2},
\end{aligned}
\label{eq:-75}
\end{equation}
Therefore
\[
\Vert p_{s,x}-p_{s,y}\Vert_{L^{2}(\mu)}\le Cs^{-(d_{s}-1)/2}\Vert p_{s/2,x}-p_{s/2,y}\Vert_{L^{2}(\nu)}^{d_{s}-1}\Vert p_{s,x}-p_{s,y}\Vert_{L^{2}(\nu)}^{2-d_{s}},
\]
By the inequality above and Hölder's inequality,
\begin{align*}
\int_{0}^{t}\Vert p_{s,x}-p_{s,y}\Vert_{L^{2}(\mu)}ds & \le C\,\Big(\int_{0}^{t}s^{1-d_{s}}\,ds\Big)^{1/2}\Big(\int_{0}^{t}\Vert p_{s/2,x}-p_{s/2,y}\Vert_{L^{2}(\nu)}^{2}ds\Big)^{(d_{s}-1)/2}\\
 & \quad\times\Big(\int_{0}^{t}\Vert p_{s,x}-p_{s,y}\Vert_{L^{2}(\nu)}^{2}ds\Big)^{1-d_{s}/2}\\
 & \le C_{T}\Big(\int_{0}^{t}\Vert p_{s/2,x}-p_{s/2,y}\Vert_{L^{2}(\nu)}^{2}ds\Big)^{1/2},
\end{align*}
where we have used the fact that $p_{s,x}-p_{s,y}=P_{s/2}(p_{s/2,x}-p_{s/2,y})$
and the $L^{2}(\nu)$-contractivity of $P_{s/2}$ for the last inequality.

Let $\rho_{\alpha}(\cdot,\cdot)$ be the $\alpha$-resolvent kernel.
By the Chapman\textendash Kolmogorov equation,
\[
\begin{aligned}\int_{0}^{t}\Vert p_{s/2,x}-p_{s/2,y}\Vert_{L^{2}(\nu)}^{2}ds & \le e^{\alpha t}\int_{0}^{\infty}e^{-\alpha s}\Vert p_{s/2,x}-p_{s/2,y}\Vert_{L^{2}(\nu)}^{2}ds\\
 & =e^{\alpha t}\int_{0}^{\infty}e^{-\alpha s}[p(s,x,x)-2p(s,x,y)+p(s,y,y)]\,ds\\
 & =e^{\alpha t}[\rho_{\alpha}(x,x)-2\rho_{\alpha}(x,y)+\rho_{\alpha}(y,y)],
\end{aligned}
\]
which, together with Lemma \ref{lem:-2}, implies that
\[
\int_{0}^{t}\Vert p_{s/2,x}-p_{s/2,y}\Vert_{L^{2}(\nu)}^{2}ds\le C_{\alpha}R(x,y).
\]
Therefore, we deduce that
\begin{equation}
\int_{0}^{t}\Vert p_{s,x}-p_{s,y}\Vert_{L^{2}(\mu)}ds\le C_{T}R(x,y)^{1/2}.\label{eq:-40}
\end{equation}
Now the Hölder continuity \eqref{eq:-67} follows readily from \eqref{eq:-32}
and \eqref{eq:-40}.

\medskip{}

Next, we turn to the Hölder continuity of $u(t,x)$ in $t$. Let $t\ge0,\,\delta>0$.
By the definition of $u$,
\[
\begin{aligned}u(t+\delta,x)-u(t,x) & =\int_{t}^{t+\delta}P_{t+\delta-s}(g(s)\mu)(x)\,ds\\
 & \quad+\int_{0}^{t}\big[P_{t+\delta-s}(g(s)\mu)(x)-P_{t-s}(g(s)\mu)(x)\big]\,ds\\
\vphantom{\int_{t}^{t+\delta}} & =I_{1}(\delta)+I_{2}(\delta).
\end{aligned}
\]

For $I_{1}(\delta)$, in the same way as \eqref{eq:-32}, we have
\[
|I_{1}(\delta)|=\Big|\int_{0}^{\delta}P_{s}(g(t-s+\delta)\mu)(x)\,ds\Big|\le\Vert g\Vert_{L^{\infty}(0,T;L^{2}(\mu))}\int_{0}^{\delta}\Vert p_{s,x}\Vert_{L^{2}(\mu)}\,ds.
\]
By the Sobolev inequality \eqref{eq:-25},
\[
\Vert p_{s,x}\Vert_{L^{2}(\mu)}\le C\mathcal{E}(p_{s,x})^{(d_{s}-1)/2}\Vert p_{s,x}\Vert_{L^{2}(\nu)}^{2-d_{s}}.
\]
It follows from an argument similar to \eqref{eq:-75} that $\mathcal{E}(p_{s,x})\le s^{-1}\Vert p_{s/2,x}\Vert_{L^{2}(\nu)}^{2}$.
Therefore,
\[
\Vert p_{s,x}\Vert_{L^{2}(\mu)}\le Cs^{-(d_{s}-1)/2}\Vert p_{s/2,x}\Vert_{L^{2}(\nu)}^{d_{s}-1}\,\Vert p_{s,x}\Vert_{L^{2}(\nu)}^{2-d_{s}}.
\]
Using the Chapman\textendash Kolmogorov equation and the estimate
$p(t,x,y)\le C_{\ast}\,t^{-d_{s}/2}$, we deduce from the above inequality
that
\[
\begin{aligned}\int_{0}^{\delta}\Vert p_{s,x}\Vert_{L^{2}(\mu)}\,ds & \le C\int_{0}^{\delta}s^{-(d_{s}-1)/2}p(s,x,x)^{(d_{s}-1)/2}\,p(2s,x,x)^{1-d_{s}/2}\,ds\\
 & \le C_{\ast}\int_{0}^{\delta}s^{-3d_{s}/4+1/2}\,ds=C_{\ast}\,\delta^{\frac{3}{2}(1-d_{s}/2)}.
\end{aligned}
\]
Thus
\begin{equation}
|I_{1}(\delta)|\le C_{\ast}\Vert g\Vert_{L^{\infty}(0,T;L^{2}(\mu))}\delta^{\frac{3}{2}(1-d_{s}/2)}.\label{eq:-76}
\end{equation}

For $I_{2}(\delta)$, by the same argument as in the estimate of $|I_{1}(\delta)|$,
we have
\begin{equation}
\begin{aligned}|I_{2}(\delta)| & \le\Vert g\Vert_{L^{\infty}(0,T;L^{2}(\mu))}\int_{0}^{t}\Vert p_{s+\delta,x}-p_{s,x}\Vert_{L^{2}(\mu)}\,ds\\
 & \le C_{\ast}\Vert g\Vert_{L^{\infty}(0,T;L^{2}(\mu))}\int_{0}^{t}\Vert p_{s/2+\delta,x}-p_{s/2,x}\Vert_{L^{2}(\nu)}^{d_{s}-1}\,\Vert p_{s+\delta,x}-p_{s,x}\Vert_{L^{2}(\nu)}^{2-d_{s}}\,\frac{ds}{s^{(d_{s}-1)/2}}\\
 & \le C_{\ast}\Vert g\Vert_{L^{\infty}(0,T;L^{2}(\mu))}\int_{0}^{t}\Vert p_{s/2+\delta,x}-p_{s/2,x}\Vert_{L^{2}(\nu)}\,\frac{ds}{s^{(d_{s}-1)/2}}.
\end{aligned}
\label{eq:-77}
\end{equation}
For any $\theta\in[0,1]$, by the spectral representation,
\[
\begin{aligned}\Vert p_{s/2+\delta,x}-p_{s/2,x}\Vert_{L^{2}(\nu)}^{2} & =\Vert(P_{s/4+\delta}-P_{s/4})p_{s/4,x}\Vert_{L^{2}(\nu)}^{2}\\
 & =\int_{0}^{\infty}(1-e^{-\delta\lambda})^{2}e^{-s\lambda/2}\,d\Vert E_{\lambda}p_{s/4,x}\Vert_{L^{2}(\nu)}^{2}\\
 & \le\delta^{2\theta}\int_{0}^{\infty}\lambda^{2\theta}e^{-s\lambda/2}\,d\Vert E_{\lambda}p_{s/4,x}\Vert_{L^{2}(\nu)}^{2}\\
\vphantom{\int_{0}^{\infty}} & \le C_{\ast}\,(\delta/s)^{2\theta}\Vert p_{s/4,x}\Vert_{L^{2}(\nu)}^{2}=C_{\ast}\,(\delta/s)^{2\theta}\,p(s/2,x,x)\\
\vphantom{\int_{0}^{\infty}} & \le C_{\ast}\delta^{2\theta}s^{-2\theta-d_{s}/2},
\end{aligned}
\]
which, together with \eqref{eq:-77}, implies that
\[
|I_{2}(\delta)|\le C_{\ast}\Vert g\Vert_{L^{\infty}(0,T;L^{2}(\mu))}\delta^{\theta}\int_{0}^{t}s^{\frac{3}{2}(1-d_{s}/2)-\theta}\,\frac{ds}{s}.
\]
Therefore, for any $\theta<\frac{3}{2}(1-d_{s}/2)$, the estimate
\begin{equation}
|I_{2}(\delta)|\le C_{\theta}\Vert g\Vert_{L^{\infty}(0,T;L^{2}(\mu))}\delta^{\theta},\label{eq:-78}
\end{equation}
is valid. Combining \eqref{eq:-76} and \eqref{eq:-78}, we deduce
that
\begin{equation}
|u(t,x)-u(s,x)|\le C_{\theta}\Vert g\Vert_{L^{\infty}(0,T;L^{2}(\mu))}|t-s|^{\theta},\label{eq:-79}
\end{equation}
for all $0<\theta<\frac{3}{2}(1-d_{s}/2)$.

\medskip{}

Now the joint Hölder continuity \eqref{eq:-65} follows readily from
\eqref{eq:-67} and \eqref{eq:-79}.
\end{proof}
\begin{defn}
\label{def:}A function $u$ is called a \emph{weak solution }to the
PDE \eqref{eq:} if:

\smallskip{}

\noindent $\hypertarget{WS.1}{\text{(WS.1)}}$\enskip{}$u\in L^{2}(0,T;\mathcal{F}(\mathbb{S}\backslash\mathrm{V}_{0}))$
and $u$ has a weak derivative $\partial_{t}u$ in $L^{2}(0,T;\mathcal{F}^{-1}(\mathbb{S}))$;

\smallskip{}

\noindent $\hypertarget{WS.2}{\text{(WS.2)}}$\enskip{}For any $v\in\mathcal{F}(\mathbb{S}\backslash\mathrm{V}_{0})$,
\[
\langle\partial_{t}u(t),v\rangle_{\nu}=-\mathcal{E}\big(u(t),v\big)+\langle f(t,u(t),\nabla u(t)),v\rangle_{\mu},\;\;\text{a.e. }t\in[0,T];
\]
$\hypertarget{WS.3}{\text{(WS.3)}}$\enskip{}$\lim_{t\to0}u(t)=\psi$
in $L^{2}(\nu)$.
\end{defn}
\begin{rem}
\label{rem:-8}(i) The term $\langle f(t,u(t),\nabla u(t)),v\rangle_{\mu}$
in $\hyperlink{WS.2}{\text{(WS.2)}}$ is legitimate since $\nabla u$
is $\mu$-a.e. defined and $u\in\mathcal{F}(\mathbb{S})\subseteq C(\mathbb{S})$.

\ (ii) Notice that, in general, the equation \eqref{eq:} does not
admit a solution $u$ such that $u(t)\in\mathrm{Dom}(\mathcal{L})$
and $\partial_{t}u(t)\in L^{2}(\nu)$ for a.e. $t\in[0,T]$. Otherwise,
the functional $v\mapsto\langle f(t,u,\nabla u),v\rangle_{\mu}$ will
be $L^{2}(\nu)$-bounded, which contradicts with the singularity between
$\nu$ and $\mu$. Therefore, solutions to non-linear parabolic PDEs
on $\mathbb{S}$ can only have mild regularity in general. This is
a remarkable feature of non-linear PDEs on $\mathbb{S}$, which suggests
a significant distinction between the PDE theory on Euclidean spaces
and that on fractals.

(iii) We shall show that if $u$ is a weak solution to \eqref{eq:}
then $u\in C((0,T]\times\mathbb{S})$ (see Theorem \ref{thm:} below).
Therefore, Definition \ref{def:} coincides with the definition of
solutions in \citep[Definition 3.17]{LQ16}. The joint continuity
of solutions is needed for the validity of the Feynman\textendash Kac
representation given by \citep[Theorem 3.19]{LQ16}, which will be
crucial in the study of the Burgers equations on $\mathbb{S}$ (see
Section \ref{sec:-4}).
\end{rem}
\begin{prop}
\emph{\label{prop:}}Suppose that $g\in L^{2}(0,T;L^{2}(\mu))$. Then
the initial and boundary problem to the PDE
\begin{equation}
\left\{ \begin{aligned}\partial_{t}u & \,d\nu=\mathcal{L}u\,d\nu+g(t,x)d\mu,\;\;\text{in}\ (0,T]\times(\mathbb{S}\backslash\mathrm{V}_{0}),\\
u & =0\;\;\text{on}\ (0,T]\times\mathrm{V}_{0},\;\;u(0)=\psi
\end{aligned}
\right.\label{eq:-13}
\end{equation}
admits a unique weak solution $u$ given by
\[
u(t)=P_{t}\psi+\int_{0}^{t}P_{t-s}(g(s)\mu)ds,\;\;t\in[0,T].
\]
Moreover
\begin{equation}
\begin{aligned}\Vert u\Vert_{L^{\infty}(0,T;L^{2}(\nu))}+ & \Vert u\Vert_{L^{2}(0,T;\mathcal{F})}+\Vert\partial_{t}u\Vert_{L^{2}(0,T;\mathcal{F}^{-1})}\\
\vphantom{\int} & \le C_{\ast}\big(\Vert\psi\Vert_{L^{2}(\nu)}+\Vert g\Vert_{L^{2}(0,T;L^{2}(\mu))}\big).
\end{aligned}
\label{eq:-5}
\end{equation}
\end{prop}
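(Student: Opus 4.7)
The plan is to treat the equation by Duhamel's principle, splitting $u(t) = P_t\psi + v(t)$ where $v(t) = \int_0^t P_{t-s}(g(s)\mu)\,ds$ is the convolution built in Definition \ref{def:-1}. The two pieces will be handled by entirely different tools: $P_t\psi$ is the free solution of the linear heat equation, for which everything is standard spectral/semigroup theory, whereas $v(t)$ has already been analysed in Lemma \ref{lem:-3} and Lemma \ref{lem:-4}, which supply all the regularity we need.

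First I would verify \hyperlink{WS.1}{\text{(WS.1)}}. For $P_t\psi$, the spectral decomposition of $\mathcal{L}$ on $L^2(\nu)$ gives $P_t\psi \in L^2(0,T;\mathcal{F}(\mathbb{S}\setminus\mathrm{V}_0))$ with $\int_0^T \mathcal{E}(P_t\psi)\,dt \le \tfrac{1}{2}\Vert\psi\Vert_{L^2(\nu)}^2$, together with $\partial_t P_t\psi = \mathcal{L}P_t\psi$, which is easily seen to lie in $L^2(0,T;\mathcal{F}^{-1})$. For $v$, Lemma \ref{lem:-4} supplies the same regularity plus the weak derivative, so the sum is of the required class. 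Next, for \hyperlink{WS.2}{\text{(WS.2)}} I would work at the approximation level: for $u_\delta(t) = P_t\psi + \int_0^{(t-\delta)^+} P_{t-s}(g(s)\mu)\,ds$, equation \eqref{eq:-34} combined with $\partial_t P_t\psi = \mathcal{L}P_t\psi$ yields, after pairing against $v \in \mathcal{F}(\mathbb{S}\setminus\mathrm{V}_0)$,
\[
\langle \partial_t u_\delta(t), v\rangle_\nu = -\mathcal{E}(u_\delta(t), v) + \langle g(t-\delta), P_\delta v\rangle_\mu.
\]
Passing $\delta \to 0$: the left side converges weakly in $L^2(0,T;\mathcal{F}^{-1})$ to $\langle\partial_t u, v\rangle_\nu$ by Lemma \ref{lem:-4}; the $\mathcal{E}$-term converges because $u_\delta \to u$ in $L^2(0,T;\mathcal{F})$; and for the right-most term I use the strong continuity $P_\delta v \to v$ in $\mathcal{F}$ together with $g \in L^2(0,T;L^2(\mu))$ to conclude $\langle g(\cdot-\delta), P_\delta v\rangle_\mu \to \langle g, v\rangle_\mu$ in $L^2(0,T)$.

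For \hyperlink{WS.3}{\text{(WS.3)}}, $\Vert P_t\psi - \psi\Vert_{L^2(\nu)} \to 0$ is $C_0$-semigroup continuity, and the estimate in Lemma \ref{lem:-4} applied on $[0,t]$ gives $\Vert v(t)\Vert_{L^2(\nu)}^2 \le \epsilon\int_0^t e^{C_\epsilon(t-s)}\Vert g(s)\Vert_{L^2(\mu)}^2 \,ds$, which tends to $0$ as $t\to 0$. The energy estimate \eqref{eq:-5} is then assembled by adding the standard heat-semigroup bound $\Vert P_\cdot\psi\Vert_{L^\infty(0,T;L^2(\nu))} + \Vert P_\cdot\psi\Vert_{L^2(0,T;\mathcal{F})} + \Vert\mathcal{L}P_\cdot\psi\Vert_{L^2(0,T;\mathcal{F}^{-1})} \le C_\ast\Vert\psi\Vert_{L^2(\nu)}$ to the bounds from Lemma \ref{lem:-4}.

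Uniqueness is the cleanest piece: if $u_1, u_2$ are two weak solutions, their difference $w$ satisfies $\hyperlink{WS.1}{\text{(WS.1)}}$--$\hyperlink{WS.3}{\text{(WS.3)}}$ with $g \equiv 0$ and $\psi = 0$. Because $w$ lies in $L^2(0,T;\mathcal{F}(\mathbb{S}\setminus\mathrm{V}_0))$ with $\partial_t w \in L^2(0,T;\mathcal{F}^{-1})$, Lemma \ref{lem:} gives absolute continuity of $\Vert w(t)\Vert_{L^2(\nu)}^2$ and
\[
\tfrac{d}{dt}\Vert w(t)\Vert_{L^2(\nu)}^2 = 2\langle\partial_t w(t), w(t)\rangle_\nu = -2\mathcal{E}(w(t)) \le 0,
\]
so $\Vert w(t)\Vert_{L^2(\nu)}^2 \le \Vert w(0)\Vert_{L^2(\nu)}^2 = 0$. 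The main obstacle, I expect, is the limit passage $\delta\to 0$ in \hyperlink{WS.2}{\text{(WS.2)}}: one must justify pointwise-in-$t$ (or at least a.e.) equality in the weak formulation starting from convergence of the approximants only in integrated senses, which is where the $\mathcal{F}^{-1}$ weak-derivative bound of Lemma \ref{lem:-4} is crucial because it allows the approximate equations to be tested against a dense family of test functions $v(t)$ and the identities read off a.e.\ in $t$.
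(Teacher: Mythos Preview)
Your proposal is correct and follows essentially the same route as the paper: approximate by $u_\delta$, use \eqref{eq:-34} to obtain the weak formulation at level $\delta$, and pass to the limit using the convergence furnished by Lemma~\ref{lem:-4}. The paper streamlines slightly by reducing at once to $\psi=0$ (the $P_t\psi$ contribution being standard), and it phrases the limit in the forcing term via the \emph{uniform} convergence $P_\delta v\to v$ rather than convergence in $\mathcal{F}$; your uniqueness argument via Lemma~\ref{lem:} is in fact more explicit than the paper's one-line appeal to \eqref{eq:-5}.
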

\begin{proof}
Clearly, we only need to prove for the case when $\psi=0$. Let $u_{\delta}$
be the truncated convolution defined by \eqref{eq:-37}, and let $u$
be the convolution given by \eqref{eq:-35}. For any $v\in\mathcal{F}(\mathbb{S}\backslash\mathrm{V}_{0})$,
by \eqref{eq:-34},
\[
\langle\partial_{t}u_{\delta}(t),v\rangle_{\nu}=-\mathcal{E}(u_{\delta}(t),v)+\langle P_{\delta}(g(t-\delta)\mu),v\rangle_{\nu}=-\mathcal{E}(u_{\delta}(t),v)+\langle g(t-\delta),P_{\delta}v\rangle_{\mu},\;\;\text{a.e.}\;t\in(0,T].
\]
Since $\lim_{\delta\to0}P_{\delta}v=v$ uniformly, by considering
a subsequence if necessary and setting $\delta\to0$, we deduce that
\[
\langle\partial_{t}u(t),v\rangle_{\nu}=-\mathcal{E}(u(t),v)+\langle g(t),v\rangle_{\mu},\;\;\text{a.e.}\;t\in(0,T].
\]
Therefore, $u$ is a weak solution to \eqref{eq:-13}.

The estimate \eqref{eq:-5} follows readily from Lemma \ref{lem:-4},
and the uniqueness of solutions is an immediate consequence of \eqref{eq:-5}.
\end{proof}
We are now in a position to state and give the proof of the main result
of this section.
\begin{thm}
\label{thm:}Suppose that \eqref{eq:-1} and \eqref{eq:-2} hold.
Then \eqref{eq:} admits a unique weak solution $u$ satisfying the
following estimate
\begin{equation}
\begin{aligned}\Vert u\Vert_{L^{\infty}(0,T;L^{2}(\nu))} & +\Vert u\Vert_{L^{2}(0,T;\mathcal{F})}+\Vert\partial_{t}u\Vert_{L^{2}(0,T;\mathcal{F}^{-1})}\\
\vphantom{\int} & \le C_{K,T}\,\big(\Vert\psi\Vert_{L^{2}(\nu)}+\Vert f(\cdot,0,0)\Vert_{L^{2}(0,T;L^{2}(\mu))}\big),
\end{aligned}
\label{eq:-18}
\end{equation}
where $C_{K,T}>0$ is a constant depending only on $T$ and the Lipschitz
constant $K$ in \eqref{eq:-1}. Moreover, if $\tilde{u}$ is the
weak solution to \eqref{eq:} with initial value $\tilde{\psi}\in L^{2}(\nu)$,
then
\begin{equation}
\begin{aligned}\Vert u & -\tilde{u}\Vert_{L^{\infty}(0,T;L^{2}(\nu))}+\Vert u-\tilde{u}\Vert_{L^{2}(0,T;\mathcal{F})}\\
\vphantom{\int} & +\Vert\partial_{t}u-\partial_{t}\tilde{u}\Vert_{L^{2}(0,T;\mathcal{F}^{-1})}\le C_{K,T}\,\Vert\psi-\tilde{\psi}\Vert_{L^{2}(\nu)}.
\end{aligned}
\label{eq:-19}
\end{equation}
\medskip{}

Suppose, in addition, that $\psi\in\mathcal{F}(\mathbb{S}\backslash\mathrm{V}_{0})$
and $f(\cdot,0,0)=0$. Then
\begin{equation}
\Vert u\Vert_{L^{\infty}(0,T;\mathcal{F})}\le C_{K,T}\,\mathcal{E}(\psi)^{1/2}.\label{eq:-74}
\end{equation}
Moreover, $u(t,x)$ is jointly continuous in $(0,T]\times\mathbb{S}$,
with $\theta$-Hölder continuity in $t\in(0,T]$ for any $\theta<\frac{3}{2}(1-d_{s}/2)$
and $\frac{1}{2}$-Hölder continuity in $x\in\mathbb{S}$ with respect
to the resistance metric.
\end{thm}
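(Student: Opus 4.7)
My plan is to first reduce \eqref{eq:} to a fixed-point problem for a Picard-type operator built from the linear theory of Proposition \ref{prop:}, and then, under the smoother hypothesis at the end, to bootstrap the regularity by re-running the fixed point in a finer norm on the mild formulation. Define $\Phi$ on the Banach space
\[
X_{T_0} := L^\infty\bigl(0,T_0;L^2(\nu)\bigr) \cap L^2\bigl(0,T_0;\mathcal{F}(\mathbb{S}\backslash\mathrm{V}_0)\bigr)
\]
by $\Phi(w)=u$, where $u$ is the weak solution supplied by Proposition \ref{prop:} to the linear problem with source $g(t,x):=f(t,x,w(t,x),\nabla w(t,x))$ and initial datum $\psi$. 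Using \eqref{eq:-1}, \eqref{eq:-2},
\[
\|g(t)\|_{L^2(\mu)}\le \|f(t,0,0)\|_{L^2(\mu)}+K\|w(t)\|_{L^2(\mu)}+K\,\mathcal{E}(w(t))^{1/2},
\]
and Corollary \ref{cor:} combined with Young's inequality (valid because $d_s-1<1$) yields $\|w(t)\|_{L^2(\mu)}^2\le \eta\,\mathcal{E}(w(t))+C_\eta\|w(t)\|_{L^2(\nu)}^2$, so that $g\in L^2(0,T_0;L^2(\mu))$ and $\Phi$ sends $X_{T_0}$ into itself.

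For the contraction, I apply Lemma \ref{lem:-4} to $\Phi(w_1)-\Phi(w_2)$, which solves the linear PDE with zero initial data and source $f(\cdot,w_1,\nabla w_1)-f(\cdot,w_2,\nabla w_2)$. The same Lipschitz/Sobolev manipulation shows
\[
\|\Phi(w_1)-\Phi(w_2)\|_{X_{T_0}}^2 \le \lambda(\eta,\epsilon,T_0)\,\|w_1-w_2\|_{X_{T_0}}^2,
\]
where one first fixes $\eta$ small, then chooses $\epsilon$ (which fixes the resulting $C_\epsilon$), and finally shrinks $T_0$ so that the additional factor $T_0\,e^{C_\epsilon T_0}$ makes $\lambda<1$. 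Iterating the Picard map over consecutive intervals of length $T_0$ produces a unique weak solution on $[0,T]$, and Proposition \ref{prop:} applied to this fixed point, together with the same Sobolev/Young estimates, gives \eqref{eq:-18}. The stability bound \eqref{eq:-19} follows by subtracting the weak formulations for $u$ and $\tilde u$: their difference solves a linear weak problem, Lemma \ref{lem:}(b) delivers $\tfrac{d}{dt}\|u-\tilde u\|_{L^2(\nu)}^2=2\langle\partial_t(u-\tilde u),u-\tilde u\rangle_\nu$ almost everywhere, and the right-hand side is controlled via the Lipschitz condition and Corollary \ref{cor:}, followed by Gr\"onwall.

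For the regularity claim I would re-run the Picard argument in $Y_{T_0}:=L^\infty(0,T_0;\mathcal{F}(\mathbb{S}\backslash\mathrm{V}_0))$ using the mild formula
\[
u(t)=P_t\psi+\int_0^tP_{t-s}\bigl(f(s,u,\nabla u)\mu\bigr)\,ds.
\]
Since $\psi\in\mathcal{F}(\mathbb{S}\backslash\mathrm{V}_0)$, spectral calculus gives $\mathcal{E}(P_t\psi)\le\mathcal{E}(\psi)$; for the convolution, Minkowski's inequality for the seminorm $\mathcal{E}^{1/2}$ together with the pointwise bound $\mathcal{E}(P_\tau(g\mu))^{1/2}\le C\tau^{-1/2}\|P_{\tau/2}(g\mu)\|_{L^2(\nu)}\le C\tau^{-(1/2+d_s/4)}\|g\|_{L^2(\mu)}$, which combines the spectral estimate for $\lambda^{1/2}e^{-\lambda\tau/2}$ with Cauchy\textendash Schwarz applied to the heat kernel estimate of Lemma \ref{lem:-2}, yields
\[
\mathcal{E}\Bigl(\int_0^tP_{t-s}(g\mu)\,ds\Bigr)^{1/2}\le C\int_0^t(t-s)^{-(1/2+d_s/4)}\|g(s)\|_{L^2(\mu)}\,ds,
\]
with integrable kernel because $d_s<2$. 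Using $f(\cdot,0,0)=0$, the Lipschitz bound and Corollary \ref{cor:} control $\|f(s,u,\nabla u)\|_{L^2(\mu)}$ by $\mathcal{E}(u(s))^{1/2}$ plus a tail already bounded by \eqref{eq:-18}, so a contraction on $Y_{T_0}$ for $T_0$ small closes as before, and by the uniqueness established above its fixed point agrees with the weak solution already produced. This yields \eqref{eq:-74}; the resulting bound $f(\cdot,u,\nabla u)\in L^\infty(0,T;L^2(\mu))$ then permits Lemma \ref{lem:-1} to apply to the convolution, while $P_t\psi$ has the same (indeed better) joint H\"older continuity via semigroup smoothing together with \eqref{eq:-14}, completing the final claim.

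The main obstacle is the $L^\infty(0,T;\mathcal{F})$ bound \eqref{eq:-74}. Formally one would test the equation against $\partial_tu$, but $\partial_tu$ lives only in $L^2(0,T;\mathcal{F}^{-1})$ and, in view of the mutual singularity of $\mu$ and $\nu$ (cf. Remark \ref{rem:-7}.(iii) and Remark \ref{rem:-8}.(ii)), the pairing $\langle f(t,u,\nabla u),\partial_tu\rangle_\mu$ cannot be given meaning. The workaround, as indicated above, is to abandon the energy method at this step and to bootstrap directly from the mild formula; the delicate point is that the Duhamel kernel controlling the $\mathcal{E}$-norm behaves like $(t-s)^{-(1/2+d_s/4)}$ with exponent strictly less than $1$ precisely because $d_s<2$ (i.e.\ $\mathbb{S}$ is Sobolev-subcritical), and the resulting regularising factor $T_0^{1/2-d_s/4}$ is what ultimately supplies the contraction on $Y_{T_0}$.
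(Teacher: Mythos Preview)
Your argument is correct. The route diverges from the paper's in two places. For existence, you run a short-time contraction on $[0,T_0]$ and iterate, whereas the paper obtains a single global contraction on $[0,T]$ by working in the exponentially weighted norm $\int_0^Te^{C_K(T-s)}\mathcal{E}(\cdot)\,ds$ (the free $\epsilon$ in Lemma~\ref{lem:-4} absorbs the Lipschitz constant, yielding a fixed factor $1/8$); this is a cosmetic difference. The real divergence is at \eqref{eq:-74}. You bootstrap by re-running Picard in $L^\infty(0,T_0;\mathcal{F})$ via the mild formula and the kernel bound $\mathcal{E}(P_\tau(g\mu))^{1/2}\le C\tau^{-(1/2+d_s/4)}\|g\|_{L^2(\mu)}$ (correct: by duality $\|P_{\tau/2}(g\mu)\|_{L^2(\nu)}\le\|g\|_{L^2(\mu)}\sup_{\|v\|_{L^2(\nu)}\le1}\|P_{\tau/2}v\|_{L^2(\mu)}$, and $\|P_{\tau/2}v\|_{L^2(\mu)}\le\|P_{\tau/2}v\|_\infty\le C\tau^{-d_s/4}\|v\|_{L^2(\nu)}$ via Chapman--Kolmogorov and Lemma~\ref{lem:-2}). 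The paper instead stays with the energy method: restarting the $L^2(0,T;\mathcal{F})$ estimate at time $t_0$ gives $\tfrac{1}{\delta}\int_{t_0}^{t_0+\delta}\mathcal{E}(u)\,ds\le e^{C_K\delta}\mathcal{E}(u(t_0))$, and an elementary lemma (if $\tfrac{1}{\delta}\int_t^{t+\delta}h\le L\delta+h(t)$ for all $\delta>0$ then $h(t)-h(s)\le 6L(t-s)$) applied to $h=\log\mathcal{E}(u(\cdot))$ upgrades this to the pointwise bound. Your mild-formula approach is the standard semilinear-heat toolkit and is more direct here; the paper's averaged-log-monotonicity lemma is more indirect but sidesteps any pointwise-in-time estimate on the singular convolution, which may be advantageous in settings where the Duhamel kernel is less explicit. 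Once either route yields $f(\cdot,u,\nabla u)\in L^\infty(0,T;L^2(\mu))$, the H\"older regularity follows identically from Lemma~\ref{lem:-1}.
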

\begin{proof}
We first prove the existence. Let $u^{0}(t)=P_{t}\psi,\;t\in[0,T]$.
By Proposition \ref{prop:}, we may define a sequence $\{u^{n}\}_{n\in\mathbb{N}_{+}}$
in $L^{2}(0,T;\allowbreak\mathcal{F}(\mathbb{S}\backslash\mathrm{V}_{0}))$
inductively by
\begin{equation}
\left\{ \begin{aligned}\partial_{t}u^{n}\;d\nu & =\mathcal{L}u^{n}\;d\nu+f^{n-1}(t)\;d\mu,\\
u^{n}\vert_{\mathrm{V}_{0}} & =0,\;\;u^{n}(0)=\psi,
\end{aligned}
\right.\label{eq:-8}
\end{equation}
where $f^{n-1}(t,x)=f(t,x,u^{n-1}(t,x),\nabla u^{n-1}(t,x))$. By
Proposition \ref{prop:}, $u^{n}\in L^{2}(0,T;\allowbreak\mathcal{F}(\mathbb{S}\backslash\mathrm{V}_{0}))$,
$\partial_{t}u^{n}\in L^{2}(0,T;\allowbreak\mathcal{F}^{-1}(\mathbb{S}))$.
Denote $w^{n}=u^{n}-u^{n-1},\,n\in\mathbb{N}_{+}$. By \eqref{eq:-8},
$w^{n+1},\,n\in\mathbb{N}_{+}$ is the solution to
\begin{equation}
\left\{ \begin{aligned}\partial_{t}w^{n+1} & \,d\nu=\mathcal{L}w^{n+1}\,d\nu+[f^{n}(t)-f^{n-1}(t)]\;d\mu,\\
w^{n+1} & \vert_{\mathrm{V}_{0}}=0,\;\;w^{n+1}(0)=0.
\end{aligned}
\right.\label{eq:-12}
\end{equation}
For any $\epsilon\in(0,1)$, by Lemma \ref{lem:}.(b), testing \eqref{eq:-12}
against $w^{n+1}(t)$ gives that
\[
\frac{1}{2}\frac{d}{dt}\Vert w^{n+1}(t)\Vert_{L^{2}(\nu)}^{2}\le-\mathcal{E}(w^{n+1}(t))+\frac{1}{32}\mathcal{E}(w^{n}(t))+\epsilon^{2}\Vert w^{n}(t)\Vert_{L^{2}(\mu)}^{2}+C_{K}\Vert w^{n+1}(t)\Vert_{L^{2}(\mu)}^{2}\big),
\]
where, and in the rest of the proof, $C_{K}>0$ denotes a generic
constant depending only on $K$ which may vary on different occasions.
Since $w^{n}|_{\mathrm{V}_{0}}=0$, by \eqref{eq:-14}, we have $\Vert w^{n}(t)\Vert_{L^{2}(\mu)}^{2}\le C_{\ast}^{2}\mathcal{E}(w^{n}(t))$.
Moreover,by Corollary \ref{cor:},
\[
\frac{1}{2}\frac{d}{dt}\Vert w^{n+1}(t)\Vert_{L^{2}(\nu)}^{2}\le-(1-C_{K}\,\epsilon^{2})\mathcal{E}(w^{n+1}(t))+\Big(\frac{1}{32}+C_{\ast}^{2}\epsilon^{2}\Big)\mathcal{E}(w^{n}(t))+C_{K}\,C_{\epsilon}^{2}\Vert w^{n+1}(t)\Vert_{L^{2}(\nu)}^{2},
\]
where $C_{\epsilon}>0$ is a constant depending only on $\epsilon$.
By choosing $\epsilon>0$ sufficiently small, we have that
\begin{equation}
\frac{d}{dt}\Vert w^{n+1}(t)\Vert_{L^{2}(\nu)}^{2}\le-\mathcal{E}(w^{n+1}(t))+\frac{1}{8}\mathcal{E}(w^{n}(t))+C_{K}\Vert w^{n+1}(t)\Vert_{L^{2}(\nu)}^{2},\label{eq:-17}
\end{equation}
By the above and Grönwall's inequality,
\begin{equation}
\Vert w^{n+1}(t)\Vert_{L^{2}(\nu)}^{2}+\int_{0}^{t}e^{C_{K}(t-s)}\mathcal{E}(w^{n+1}(s))\,ds\le\frac{1}{8}\int_{0}^{t}e^{C_{K}(t-s)}\mathcal{E}(w^{n}(s))\,ds,\label{eq:-69}
\end{equation}
which implies that
\begin{equation}
\begin{aligned} & \Vert u^{m}-u^{m-1}\Vert_{L^{\infty}(0,T;L^{2}(\nu))}+\bigg(\int_{0}^{T}e^{C_{K}(T-s)}\mathcal{E}(u^{m}(s)-u^{m-1}(s))\,ds\bigg)^{1/2}\\
 & \le2^{-m+n}\bigg[\Vert u^{n}-u^{n-1}\Vert_{L^{\infty}(0,T;L^{2}(\nu))}+\bigg(\int_{0}^{T}e^{C_{K}(T-s)}\mathcal{E}(u^{n}(s)-u^{n-1}(s))\,ds\bigg)^{1/2}\bigg],
\end{aligned}
\label{eq:-16}
\end{equation}
for all $m\ge n$, and that
\[
\begin{aligned}\Vert u^{n}\Vert_{L^{\infty}(0,T;L^{2}(\nu))} & +\bigg(\int_{0}^{T}e^{C_{K}(T-s)}\mathcal{E}(u^{n}(s))\,ds\bigg)^{1/2}\\
 & \le\Vert u^{1}\Vert_{L^{\infty}(0,T;L^{2}(\nu))}+\bigg(\int_{0}^{T}e^{C_{K}(T-s)}\mathcal{E}(u^{1}(s))\,ds\bigg)^{1/2}.
\end{aligned}
\]
Moreover, by Proposition \ref{prop:}, we have 
\[
\begin{aligned}\Vert u^{1}\Vert_{L^{\infty}(0,T;L^{2}(\nu))} & +\bigg(\int_{0}^{T}e^{C_{K}(T-s)}\mathcal{E}(u^{1}(s))\,ds\bigg)^{1/2}\\
\vphantom{\int_{0}^{T}} & \le C_{\ast}\,e^{C_{K}T}\,\big(\Vert\psi\Vert_{L^{2}(\nu)}+\Vert f(\cdot,u^{0},\nabla u^{0})\Vert_{L^{2}(0,T;L^{2}(\mu))}\big)\\
\vphantom{\int_{0}^{T}} & \le C_{\ast}\,e^{C_{K}T}\,\big(\Vert\psi\Vert_{L^{2}(\nu)}+\Vert u^{0}\Vert_{L^{2}(0,T;\mathcal{F})}+\Vert f(\cdot,0,0)\Vert_{L^{2}(0,T;L^{2}(\mu))}\big).
\end{aligned}
\]
Let $-\mathcal{L}=\int_{0}^{\infty}\lambda\;dE_{\lambda}$ be the
spectral decomposition. Then
\[
\begin{aligned}\Vert u^{0}\Vert_{L^{2}(0,T;\mathcal{F})}^{2} & \le T\Vert\psi\Vert_{L^{2}(\nu)}^{2}+\int_{0}^{T}\mathcal{E}(P_{t}\psi)\,dt\\
 & =T\Vert\psi\Vert_{L^{2}(\nu)}^{2}+\int_{0}^{T}\int_{0}^{\infty}\lambda e^{-2t\lambda}\,d\Vert E_{\lambda}\psi\Vert_{L^{2}(\nu)}^{2}\;dt\\
 & =T\Vert\psi\Vert_{L^{2}(\nu)}^{2}+\frac{1}{2}\int_{0}^{\infty}(1-e^{-2T\lambda})\,d\Vert E_{\lambda}\psi\Vert_{L^{2}(\nu)}^{2}\\
\vphantom{\int_{0}^{\infty}} & \le(T+1)\Vert\psi\Vert_{L^{2}(\nu)}^{2}.
\end{aligned}
\]
Therefore, we obtain that
\begin{equation}
\begin{aligned}\Vert u^{n}\Vert_{L^{\infty}(0,T;L^{2}(\nu))} & +\bigg(\int_{0}^{T}e^{C_{K}(T-s)}\mathcal{E}(u^{n}(s))\,ds\bigg)^{1/2}\\
\vphantom{\int_{0}^{T}} & \le C_{\ast}\,e^{C_{K}T}\,\big(\Vert\psi\Vert_{L^{2}(\nu)}+\Vert f(\cdot,0,0)\Vert_{L^{2}(0,T;L^{2}(\mu))}\big),\quad n\in\mathbb{N}_{+}.
\end{aligned}
\label{eq:-15}
\end{equation}

Furthermore, by \eqref{eq:-8}, $u^{m}-u^{n}$ is the solution to
\[
\left\{ \begin{aligned}\partial_{t}(u^{m}-u^{n}) & \;d\nu=\mathcal{L}(u^{m}-u^{n})\;d\nu+[f^{m-1}(t)-f^{n-1}(t)]\;d\mu,\\
(u^{m}-u^{n}) & \vert_{\mathrm{V}_{0}}=0,\;\;(u^{m}-u^{n})(0)=0.
\end{aligned}
\right.
\]
For any $v\in\mathcal{F}(\mathbb{S}\backslash\mathrm{V}_{0})$, by
the above equation,
\[
\big|\langle\partial_{t}(u^{m}-u^{n}),v\rangle_{\nu}\big|\le\big[\mathcal{E}(u^{m}-u^{n})^{1/2}+C_{K}\mathcal{E}(u^{m-1}-u^{n-1})^{1/2}\big]\mathcal{E}(v)^{1/2},
\]
which implies that
\begin{equation}
\Vert\partial_{t}(u^{m}-u^{n})\Vert_{\mathcal{F}^{-1}}\le\mathcal{E}(u^{m}-u^{n})^{1/2}+C_{K}\mathcal{E}(u^{m-1}-u^{n-1})^{1/2}.\label{eq:-20}
\end{equation}
By \eqref{eq:-16} and \eqref{eq:-15}, we see that 
\[
\Vert\partial_{t}(u^{m}-u^{n})\Vert_{L^{2}(0,T;\mathcal{F}^{-1})}\le C_{\ast}\,e^{C_{K}T}\,2^{-(m-n)}\,\big(\Vert\psi\Vert_{L^{2}(\nu)}+\Vert f(\cdot,0,0)\Vert_{L^{2}(0,T;L^{2}(\mu))}\big).
\]
Therefore, $\{u^{n}\}$ is a $\Vert\cdot\Vert_{\ast}\text{-}$Cauchy
sequence satisfying
\[
\Vert u^{n}\Vert_{\ast}\le C_{\ast}\,e^{C_{K}T}\,\big(\Vert\psi\Vert_{L^{2}(\nu)}+\Vert f(\cdot,0,0)\Vert_{L^{2}(0,T;L^{2}(\mu))}\big),\;\;n\in\mathbb{N}_{+},
\]
where
\[
\Vert u\Vert_{\ast}=\Vert u\Vert_{L^{\infty}(0,T;L^{2}(\nu))}+\Vert u\Vert_{L^{2}(0,T;\mathcal{F})}+\Vert\partial_{t}u\Vert_{L^{2}(0,T;\mathcal{F}^{-1})}.
\]
Therefore, there exists a $u\in L^{2}(0,T;\mathcal{F}(\mathbb{S}\backslash\mathrm{V}_{0}))$
such that $\lim_{n\to\infty}\Vert u^{n}-u\Vert_{\ast}=0$. It is clear
that $u$ is a weak solution to \eqref{eq:}, and the estimate \eqref{eq:-18}
holds as $\mathcal{E}^{1/2}(\cdot)$ and $\Vert\cdot\Vert_{\mathcal{F}}$
are equivalent on $\mathcal{F}(\mathbb{S}\backslash\mathrm{V}_{0})$.
This proves the existence.

\medskip{}

Suppose that $\tilde{u}$ is a weak solution to \eqref{eq:} with
initial value $\tilde{\psi}$. By an argument similar to \eqref{eq:-17}
and \eqref{eq:-20}, it can be shown that
\[
\frac{d}{dt}\Vert u(t)-\tilde{u}(t)\Vert_{L^{2}(\nu)}^{2}\le-\frac{1}{2}\mathcal{E}(u(t)-\tilde{u}(t))+C_{K}\Vert u(t)-\tilde{u}(t)\Vert_{L^{2}(\nu)}^{2},
\]
and that
\[
\Vert\partial_{t}(u-\tilde{u})\Vert_{\mathcal{F}^{-1}}\le C_{K}\,\mathcal{E}(u-\tilde{u})^{1/2}.
\]
The estimate \eqref{eq:-19} follows readily from the above two inequalities.
The uniqueness of solutions is now an immediate consequence of \eqref{eq:-19}.

\medskip{}

Suppose, in addition, that $\psi\in\mathcal{F}(\mathbb{S}\backslash\mathrm{V}_{0})$
and $f(\cdot,0,0)=0$. Then \eqref{eq:-69} also holds for $n=0$
with $u^{-1}=0$. Therefore,
\[
\int_{0}^{T}\mathcal{E}(u^{m}(t))\,dt\le e^{C_{K}T}\int_{0}^{T}\mathcal{E}(u^{0}(t))\,dt=e^{C_{K}T}\int_{0}^{T}\mathcal{E}(P_{t}\psi)\,dt\le Te^{C_{K}T}\mathcal{E}(\psi),
\]
which implies that
\begin{equation}
\int_{0}^{T}\mathcal{E}(u(t))\,dt\le Te^{C_{K}T}\mathcal{E}(\psi).\label{eq:-70}
\end{equation}

Now for any $\delta\in(0,T)$, $u$ is the solution to 
\[
\left\{ \begin{aligned}\partial_{t}u\,d\nu & =\mathcal{L}u\,d\nu+f(t,x,u,\nabla u)\,d\mu,\;\;t\in(t_{0},t_{0}+\delta],\\
u\vert_{\mathrm{V}_{0}} & =0,\;\;u\vert_{t=t_{0}}=u(t_{0}).
\end{aligned}
\right.
\]
Applying \eqref{eq:-70} to the above PDE and using $\Vert u\Vert_{L^{2}(\nu)}^{2}\le C_{\ast}\mathcal{E}(u)$
gives that
\begin{equation}
\frac{1}{\delta}\int_{t_{0}}^{t_{0}+\delta}\mathcal{E}(u(t))\,dt\le e^{C_{K}\delta}\,\mathcal{E}(u(t_{0})),\;\;\text{a.e.}\ t_{0}\in[0,T-\delta]\ \text{and any}\ \delta>0.\label{eq:-72}
\end{equation}
We claim that \eqref{eq:-72} implies \eqref{eq:-74}. We first show
the following lemma.

\medskip{}
\begin{lem*}
Let $h(t)$ be a locally integrable function on $[0,\infty)$ satisfying
\begin{equation}
\frac{1}{\delta}\int_{t}^{t+\delta}h(s)\,ds\le L\delta+h(t),\;\;\text{a.e.}\ t\in[0,\infty)\ \text{and any}\ \delta>0,\label{eq:-73}
\end{equation}
for some constant $L>0$. Then $h(t)-h(s)\le6L(t-s),\ \text{a.e.}\ 0<s\le t<\infty.$
\end{lem*}
To prove the lemma, suppose first that $h$ is differentiable on $(0,\infty)$.
Suppose the contrary that $h(t)-h(s)>3L(t-s)$ for some $0<s<t<\infty$.
Then there exists an $t_{0}\in(s,t)$ such that $h^{\prime}(t_{0})>3L$.
Moreover, $h(r)-h(t_{0})>3L(r-t_{0}),\;r\in[t_{0},t_{0}+\delta]$
for $\delta>0$ sufficiently small. This implies that $\frac{1}{\delta}\int_{t_{0}}^{t_{0}+\delta}h(r)\,dr>3L\delta/2+\,h(t_{0})$,
which contradicts \eqref{eq:-73}. This proves the lemma for differentiable
functions $h$.

For general $h$, let $h_{\epsilon}(t)=\frac{1}{\epsilon}\int_{t}^{t+\epsilon}h(s)\,ds,\;\epsilon>0$.
Then $h_{\epsilon}$ is differentiable and satisfies \eqref{eq:-73}
with $L$ replaced by $2L$. The above case gives that $h_{\epsilon}(t)-h_{\epsilon}(s)\le6L(t-s)$.
It remains to apply the Lebesgue differentiation theorem to complete
the proof of the lemma.

\bigskip{}

Now by \eqref{eq:-72} and Jensen's inequality, the function $h(t)=\log[\mathcal{E}(u(t))]$
satisfies \eqref{eq:-73} with $L=C_{K}$. It follows from the previous
lemma that 
\[
\mathcal{E}(u(t))\le\,e^{C_{K}(t-s)}\,\mathcal{E}(u(s)),\;\text{a.e.}\ 0<s\le t\le T.
\]
Using the above inequality and \eqref{eq:-70} again, we deduce that
\[
\mathcal{E}(u(t))\le\frac{1}{t}\int_{0}^{t}e^{C_{K}(t-s)}\,\mathcal{E}(u(s))\,ds\le e^{C_{K}t}\,\mathcal{E}(\psi),\;\;\text{a.e.}\ t\in(0,T],
\]
which implies \eqref{eq:-74}.

\medskip{}

We now prove the joint Hölder continuity. Let $g(t,x)=f(t,x,u(t,x),\nabla u(t,x))$.
Then $u$ is the solution to the PDE
\[
\partial_{t}u\;d\nu=\mathcal{L}u\,d\nu+g(t)d\mu.
\]
By Proposition \ref{prop:},
\[
u(t)=P_{t}\psi+\int_{0}^{t}P_{t-s}(g(s)\mu)\,ds.
\]
By \eqref{eq:-74} and the Sobolev inequality \eqref{eq:-25}, it
is easily seen that
\[
\Vert g\Vert_{L^{\infty}(0,T;L^{2}(\mu))}<\infty.
\]
We  now can apply Lemma \ref{lem:-1} and Proposition \ref{prop:}
and to deduce the desired joint Hölder continuity.
\end{proof}

\section{\label{sec:-4}The Burgers equations}

As an application of Theorem \ref{thm:} and the Feynman\textendash Kac
representation for (backward) parabolic PDEs on $\mathbb{S}$ in \citep[Theorem 3.19]{LQ16},
we study the initial-boundary value problem for the following analogue
on $\mathbb{S}$ of the Burgers equations on $\mathbb{R}$
\begin{equation}
\left\{ \begin{aligned}\partial_{t}u & \,d\nu=\mathcal{L}u\,d\nu+u\nabla u\,d\mu,\;\;\text{in}\ (0,T]\times(\mathbb{S}\backslash\mathrm{V}_{0}),\\
u & =0\ \;\text{on}\ (0,T]\times\mathrm{V}_{0},\;\;u(0)=\psi,
\end{aligned}
\right.\label{eq:-3}
\end{equation}
where $\psi\in\mathcal{F}(\mathbb{S}\backslash\mathrm{V}_{0})$. We
shall prove the existence and uniqueness of solutions to the equation
\eqref{eq:-3}, and derive the regularity of the solutions.
\begin{rem}
\label{rem:-11}We would like to point out a difference between the
Burgers equations on $\mathbb{S}$ and those on $\mathbb{R}$. The
Burgers equations on $\mathbb{R}$ can be exactly solved with an explicit
formula for the solutions via the Cole\textendash Hopf transformation,
and properties of solutions can be derived using the explicit formula.
However, this Cole\textendash Hopf type of transformation is not available
on $\mathbb{S}$. The Cole\textendash Hopf transformation reduces
the Burgers equation on $\mathbb{R}$ for $u$ to a heat equation
for $-\nabla(\log u)$. In contrast, on $\mathbb{S}$, the formal
expression $\mathcal{L}[\nabla(\log u)]$ is not well-defined, since
the gradient $\nabla(\log u)$ is only $\mu$-a.e. defined and therefore
$\nabla(\log u)\not\in\mathcal{F}(\mathbb{S})$ due to the singularity
between $\nu$ and $\mu$. Hence, different approaches must be employed
for the study of \eqref{eq:-3}.
\end{rem}
Let us start with the Feynman\textendash Kac representation for solutions
to parabolic PDEs on $\mathbb{S}$. Let $\{X_{t}\}_{t\ge0}$ and $\{W_{t}\}_{t\ge0}$
be Brownian motion and the representing martingale on $\mathbb{S}$
respectively, i.e. $\{X_{t}\}_{t\ge0}$ is the diffusion process associated
with the form $(\mathcal{E},\mathcal{F}(\mathbb{S}))$, and $\{W_{t}\}_{t\ge0}$
is the unique martingale additive functional having $\mu$ as its
energy measure such that $M_{t}^{[u]}=\int_{0}^{t}\nabla u(X_{r})\,dW_{r}$
for any $u\in\mathcal{F}(\mathbb{S})$, where $M^{[u]}$ is the martingale
part of $u(X_{t})-u(X_{0})$ (cf. \citep[Theorem 5.4]{Ku89} and \citep[Section 2]{LQ16}).
The following result was given in \citep[Theorem 3.19]{LQ16}, and
is an analogue on $\mathbb{S}$ of the representation theorem for
semi-linear PDEs on $\mathbb{R}^{d}$ established by S.~Peng in \citep{Pen91}.
See \citep[Section 3]{LQ16} for the definition of solutions to backward
stochastic differential equations (BSDEs) on $\mathbb{S}$.
\begin{thm}
\label{thm:-6}If the PDE \eqref{eq:} admits a weak solution $u$
jointly continuous in $(0,T]\times\mathbb{S}$, then
\[
(Y_{t},Z_{t})=(u(T-t,X_{t}),\nabla u(T-t,X_{t}))
\]
is the unique solution to the BSDE
\[
\left\{ \begin{aligned}dY_{t}= & -f(T-t,X_{t},Y_{t},Z_{t})d\langle W\rangle_{t}+Z_{t}dW_{t},\;\;t\in[0,\sigma^{(T)}),\\
Y_{\sigma^{(T)}} & =\Psi(\sigma^{(T)},X_{\sigma^{(T)}}),
\end{aligned}
\right.
\]
on $\big(\Omega,\mathbb{P}_{x}\big)$ for each $x\in\mathbb{S}$,
where $\sigma^{(T)}=T\wedge\inf\{t>0:X_{t}\in\mathrm{V}_{0}\}$, and
\[
\Psi(t,x)=\begin{cases}
\;\;0, & \mbox{if }(t,x)\in[0,T)\times\mathrm{V}_{0},\\
\psi(x), & \mbox{if }(t,x)\in\{T\}\times\mathbb{S}\backslash\mathrm{V}_{0}.
\end{cases}
\]
Moreover, the solution to \eqref{eq:} has the representation $u(T,x)=Y_{0}=\mathbb{E}_{x}(Y_{0})$
for all $x\in\mathbb{S}$.
\end{thm}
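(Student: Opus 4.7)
The plan is to derive the BSDE directly from a time-dependent Fukushima--It\^o decomposition applied to $u(T-t,X_t)$ under each $\mathbb{P}_x$, which is essentially the argument carried out in \citep[Theorem 3.19]{LQ16}. First, I would verify that the candidate processes are admissible for the BSDE framework: since Theorem \ref{thm:} guarantees that $u$ is jointly continuous on $(0,T]\times\mathbb{S}$, $Y_t=u(T-t,X_t)$ is a continuous adapted process on $[0,\sigma^{(T)})$; and since $\nabla u(T-t,\cdot)\in L^{2}(\mu)$ for a.e. $t$ while the quadratic variation $\langle W\rangle$ has Revuz measure $\mu$, the stochastic integral $\int_0^{\cdot} Z_s\,dW_s$ with $Z_t=\nabla u(T-t,X_t)$ is well-defined.

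The core step is the semimartingale decomposition of $Y_t$. After approximating $u$ by smoother functions $u_\epsilon(t,\cdot)\in\mathrm{Dom}(\mathcal{L})$ (for instance via time mollification combined with spectral truncation in the $\mathcal{F}$-norm), the classical time-dependent It\^o formula gives
\begin{equation*}
u_\epsilon(T-t,X_t)-u_\epsilon(T,X_0)=\int_0^t\bigl[-\partial_s u_\epsilon+\mathcal{L}u_\epsilon\bigr](T-s,X_s)\,ds+\int_0^t \nabla u_\epsilon(T-s,X_s)\,dW_s.
\end{equation*}
The weak equation \hyperlink{WS.2}{\text{(WS.2)}} says that $\partial_s u\,d\nu-\mathcal{L}u\,d\nu=f(s,\cdot,u,\nabla u)\,d\mu$ in the distributional sense, and the Revuz correspondence identifies the PCAF of $\nu$ with $dt$ and the PCAF of $\mu$ with $d\langle W\rangle_t$. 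Passing to the limit $\epsilon\to 0$, the drift integral collapses to $-\int_0^t f(T-s,X_s,Y_s,Z_s)\,d\langle W\rangle_s$, yielding the BSDE dynamics. Stability of stochastic integrals in $L^2$ together with the estimates from Theorem \ref{thm:} justify this passage to the limit.

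The terminal condition $Y_{\sigma^{(T)}}=\Psi(\sigma^{(T)},X_{\sigma^{(T)}})$ is then a direct consequence of the joint continuity of $u$, the initial condition $u(0)=\psi$ (applicable when $\sigma^{(T)}=T$), and the Dirichlet boundary condition $u|_{\mathrm{V}_0}=0$ (applicable when $X_{\sigma^{(T)}}\in\mathrm{V}_0$). Uniqueness of the BSDE solution under the Lipschitz hypothesis \eqref{eq:-1} is a standard Pardoux--Peng-type argument, adapted to the Dirichlet-form setting as in \citep[Section 3]{LQ16}. The representation $u(T,x)=\mathbb{E}_x(Y_0)$ is immediate, since $X_0=x$ $\mathbb{P}_x$-a.s. renders $Y_0=u(T,x)$ deterministic. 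The principal obstacle is the limit passage in the second step: the drift produced by the classical It\^o formula for the regularisations is \emph{a priori} written as a Lebesgue-time integral, but in the limit it must be recognised as an integral against $d\langle W\rangle_t$, whose Revuz measure $\mu$ is singular with respect to $\nu$. This conversion between mutually singular time scales is precisely the subtlety that makes the result non-classical, and is what forces us to go through the weak formulation of the PDE rather than attempt a direct classical It\^o computation.
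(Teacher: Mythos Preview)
The paper does not prove this theorem; it simply quotes it verbatim from \citep[Theorem 3.19]{LQ16} and uses it as a black box. Your sketch is therefore not competing against any argument in the present paper, and what you outline is indeed the strategy of \citep{LQ16}: a time-dependent Fukushima decomposition for $u(T-t,X_t)$, identification of the additive functionals via the Revuz correspondence, and a Pardoux--Peng uniqueness argument for the BSDE.

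One small correction: you invoke Theorem~\ref{thm:} to obtain the joint continuity of $u$, but joint continuity is already part of the \emph{hypothesis} of Theorem~\ref{thm:-6}, so there is nothing to verify there. More substantively, your description of the approximation step is slightly off. Writing the classical It\^o formula for $u_\epsilon$ produces a drift $\int_0^t[-\partial_s u_\epsilon+\mathcal{L}u_\epsilon](T-s,X_s)\,ds$, and you then say the weak equation converts this into an integral against $d\langle W\rangle_s$. But for the approximants $u_\epsilon\in\mathrm{Dom}(\mathcal{L})$ the expression $-\partial_s u_\epsilon+\mathcal{L}u_\epsilon$ is a genuine $L^2(\nu)$ function, not $f(\cdot,u_\epsilon,\nabla u_\epsilon)$, and there is no pointwise identity relating them. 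The actual mechanism in \citep{LQ16} is to work instead with the Fukushima decomposition of the additive functional associated to $u(T-t,\cdot)$ directly (via the martingale additive functional $M^{[u]}$ and its zero-energy part), using the weak formulation $\hyperlink{WS.2}{\text{(WS.2)}}$ to identify the zero-energy part as the PCAF with Revuz measure $f(\cdot,u,\nabla u)\,\mu$, which is exactly $\int f(T-s,X_s,Y_s,Z_s)\,d\langle W\rangle_s$. The passage through $\mathrm{Dom}(\mathcal{L})$ approximants and a $ds$-drift is not the right intermediate object here, precisely because of the $\nu$--$\mu$ singularity you flag at the end.
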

\begin{prop}
\label{prop:-2}The Burgers equation \eqref{eq:-3} admits a unique
weak solution $u$ satisfying the maximal principle below
\[
\Vert u\Vert_{L^{\infty}(0,T;L^{\infty})}\le\Vert\psi\Vert_{L^{\infty}}.
\]
Moreover,
\begin{equation}
\Vert u\Vert_{L^{\infty}(0,T;L^{2}(\nu))}+\Vert u\Vert_{L^{2}(0,T;\mathcal{F})}+\Vert\partial_{t}u\Vert_{L^{2}(0,T;\mathcal{F}^{-1})}\le C,\label{eq:-27}
\end{equation}
for some constant $C>0$ depending only on $\Vert\psi\Vert_{L^{\infty}}$
and $T$. The solution $u$ is jointly continuous in $(0,T]\times\mathbb{S}$,
with $\theta$-Hölder continuity in $t\in(0,T]$ for any $\theta<\frac{3}{2}(1-d_{s}/2)$
and $\frac{1}{2}$-Hölder continuity in $x\in\mathbb{S}$ with respect
to the resistance metric.
\end{prop}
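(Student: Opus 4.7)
The plan is to construct the solution by a Picard iteration that linearizes the quadratic nonlinearity $u\nabla u$: starting from $u^{0}(t,\cdot)=P_{t}\psi$, at each stage solve
\begin{equation*}
\partial_{t}u^{n+1}\,d\nu = \mathcal{L}u^{n+1}\,d\nu + u^{n}\,\nabla u^{n+1}\,d\mu
\end{equation*}
with zero boundary data and initial value $\psi$. Provided $u^{n}\in L^{\infty}([0,T]\times\mathbb{S})$, the coefficient $f(t,x,y,z):=u^{n}(t,x)\,z$ is globally Lipschitz in $(y,z)$ with constant $\Vert u^{n}\Vert_{L^{\infty}}$ and satisfies $f(\cdot,0,0)\equiv 0$, so Theorem \ref{thm:} produces a unique weak solution $u^{n+1}$ jointly H\"older continuous on $(0,T]\times\mathbb{S}$, and \eqref{eq:-74} yields a bound $\Vert u^{n+1}\Vert_{L^{\infty}(0,T;\mathcal{F})}\le C\,\mathcal{E}(\psi)^{1/2}$ whose constant depends only on $\Vert u^{n}\Vert_{L^{\infty}}$ and $T$. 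The heart of the argument is to propagate $\Vert u^{n+1}\Vert_{L^{\infty}}\le\Vert\psi\Vert_{L^{\infty}}$ independently of $n$, so that all these constants remain under control as $n\to\infty$.

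This maximum principle is obtained from the Feynman\textendash Kac representation of Theorem \ref{thm:-6}: the pair $(Y_{t},Z_{t})=(u^{n+1}(T-t,X_{t}),\nabla u^{n+1}(T-t,X_{t}))$ satisfies
\begin{equation*}
dY_{t}=-u^{n}(T-t,X_{t})\,Z_{t}\,d\langle W\rangle_{t}+Z_{t}\,dW_{t},\qquad Y_{\sigma^{(T)}}=\Psi.
\end{equation*}
The driver is \emph{linear} in $Z$ with a bounded coefficient, so Novikov's condition is met and a Girsanov change of measure with Dol\'eans\textendash Dade exponential of $\int_{0}^{\cdot}u^{n}(T-s,X_{s})\,dW_{s}$ turns $\widetilde{W}_{t}=W_{t}-\int_{0}^{t}u^{n}(T-s,X_{s})\,d\langle W\rangle_{s}$ into a $\mathbb{Q}$-martingale, under which the BSDE reduces to $dY_{t}=Z_{t}\,d\widetilde{W}_{t}$. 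Hence $Y$ is a $\mathbb{Q}$-martingale and $u^{n+1}(T,x)=Y_{0}=\mathbb{E}_{\mathbb{Q},x}[\Psi(\sigma^{(T)},X_{\sigma^{(T)}})]$, yielding $|u^{n+1}|\le\Vert\psi\Vert_{L^{\infty}}$; since $\Vert u^{0}\Vert_{L^{\infty}}=\Vert P_{\cdot}\psi\Vert_{L^{\infty}}\le\Vert\psi\Vert_{L^{\infty}}$, induction gives the uniform bound on the whole sequence. Convergence of $\{u^{n}\}$ is then established by testing the equation for the increment $w^{n+1}=u^{n+1}-u^{n}$ (which is driven by $u^{n}\nabla w^{n+1}+w^{n}\nabla u^{n}$) against $w^{n+1}$, controlling the two nonlinear terms by the $L^{\infty}$ bound on $u^{n}$, the Sobolev inequality \eqref{eq:-25} of Corollary \ref{cor:}, Young's inequality, and the uniform $\mathcal{F}$-bound on $u^{n}$, and then invoking Gr\"onwall; on a sufficiently short time interval this is a strict contraction in $\Vert\cdot\Vert_{L^{\infty}(0,T;L^{2}(\nu))}+\Vert\cdot\Vert_{L^{2}(0,T;\mathcal{F})}$, and concatenating finitely many such intervals (whose common length depends only on $\Vert\psi\Vert_{L^{\infty}}$ and $T$) produces the limiting weak solution $u$ together with the energy bound \eqref{eq:-27}.

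Uniqueness follows from the same energy estimate applied to the difference of two bounded weak solutions. For the space-time regularity, the bounds $u\in L^{\infty}$ and $\nabla u\in L^{\infty}(0,T;L^{2}(\mu))$ imply that $g:=u\,\nabla u\in L^{\infty}(0,T;L^{2}(\mu))$; the mild formulation $u(t)=P_{t}\psi+\int_{0}^{t}P_{t-s}(g(s)\mu)\,ds$ then places us directly in the setting of Lemma \ref{lem:-1}, which delivers the claimed $\theta$-H\"older continuity in $t$ for any $\theta<\tfrac{3}{2}(1-d_{s}/2)$ and $\tfrac{1}{2}$-H\"older continuity in $x$ with respect to the resistance metric. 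The principal obstacle is the extraction of the uniform $L^{\infty}$ bound without recourse to the Cole\textendash Hopf transformation unavailable on $\mathbb{S}$ (cf. Remark \ref{rem:-11}): the Girsanov step has to be executed so that the BSDE/PDE correspondence of Theorem \ref{thm:-6} stays intact under the change of measure at each iterate, and one has to verify that the martingale property of $Y$ under $\mathbb{Q}$ really gives the pointwise bound on $u^{n+1}$ for every starting point $x$. Once this is secured, the remaining steps reduce to standard Dirichlet-form energy estimates combined with the Sobolev inequalities of Section \ref{sec:}.
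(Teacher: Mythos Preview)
Your proposal is correct and follows the paper's strategy almost exactly: the same linearized Picard iteration $u^{n-1}\nabla u^{n}$, the same Feynman--Kac/Girsanov argument for the uniform $L^{\infty}$ bound on the iterates (the paper, like you, invokes Novikov via the exponential integrability of $\langle W\rangle_{T}$ and then reads off $u^{n}(T,x)=\tilde{\mathbb{E}}_{x}[\Psi]$), the same energy-estimate route to \eqref{eq:-27} and to uniqueness, and the same reduction to Lemma~\ref{lem:-1} for the joint H\"older regularity via $g=u\nabla u\in L^{\infty}(0,T;L^{2}(\mu))$.

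The one genuine difference is how convergence of $\{u^{n}\}$ is obtained. You argue by a strict contraction for the increments $w^{n+1}=u^{n+1}-u^{n}$ on short subintervals, using both the uniform $L^{\infty}$ bound and the uniform $L^{\infty}(0,T;\mathcal{F})$ bound \eqref{eq:-74} to control the cross term $\langle w^{n}\nabla u^{n},w^{n+1}\rangle_{\mu}$; this gives strong convergence and is fully constructive. The paper instead uses only the uniform bounds $\Vert u^{n}\Vert_{L^{2}(0,T;\mathcal{F})}+\Vert\partial_{t}u^{n}\Vert_{L^{2}(0,T;\mathcal{F}^{-1})}\le C$ to extract a weakly convergent subsequence, and then identifies the weak limit of the nonlinear term. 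Your approach buys strong convergence of the whole sequence at the price of invoking \eqref{eq:-74} (hence an intermediate dependence on $\mathcal{E}(\psi)$, which is harmless since the final estimate \eqref{eq:-27} is derived directly from the limit equation); the paper's compactness route is shorter but leaves the passage to the limit in $u^{n-1}\nabla u^{n}$ somewhat implicit. Either argument closes.
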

\begin{proof}
Existence. We define the sequence $\{u^{n}\}_{n\in\mathbb{N}}\subseteq L^{2}(0,T;\mathcal{F})$
by induction as follows. Let $u^{0}(t)=P_{t}\psi$. Then $\Vert u^{0}\Vert_{L^{\infty}(0,T;L^{\infty})}\le\Vert\psi\Vert_{L^{\infty}}$.
Suppose that $u^{n-1}$ with $\Vert u^{n-1}\Vert_{L^{\infty}(0,T;L^{\infty})}\le\Vert\psi\Vert_{L^{\infty}}$
has been defined. The function $u^{n}$ is defined to be the unique
weak solution to the PDE (cf. Theorem \ref{thm:})
\[
\left\{ \begin{aligned}\partial_{t}u^{n} & d\nu=\mathcal{L}u^{n}\,d\nu+u^{n-1}\nabla u^{n}\,d\mu,\;\;\text{in}\ (0,T]\times(\mathbb{S}\backslash\mathrm{V}_{0}),\\
u^{n} & =0\;\;\text{on}\ (0,T]\times\mathrm{V}_{0},\;\;u^{n}(0)=\psi.
\end{aligned}
\right.
\]

To verify the definition of $\{u^{n}\}$, we must show that $\Vert u^{n}\Vert_{L^{\infty}(0,T;L^{\infty})}\le\Vert\psi\Vert_{L^{\infty}}$.
Without loss of generality, we only need to show that $\Vert u^{n}(T)\Vert_{L^{\infty}}\le\Vert\psi\Vert_{L^{\infty}}$.
By Theorem \ref{thm:-6}, $(Y_{t},Z_{t})=(u^{n}(T-t,X_{t}),\nabla u^{n}(T-t,X_{t}))$
is the unique solution to the BSDE
\begin{equation}
\left\{ \begin{aligned}dY_{t}= & -u^{n-1}(T-t,X_{t})\,Z_{t}\,d\langle W\rangle_{t}+Z_{t}\,dW_{t},\;\;t\in[0,\sigma^{(T)}),\\
Y_{\sigma^{(T)}} & =\Psi(\sigma^{(T)},X_{\sigma^{(T)}}),\hspace{2em}
\end{aligned}
\right.\label{eq:-28}
\end{equation}
where $\sigma^{(T)}=T\wedge\inf\{t>0:X_{t}\in\mathrm{V}_{0}\}$, and
\[
\Psi(t,x)=\begin{cases}
\;\;0, & \mbox{if }(t,x)\in[0,T)\times\mathrm{V}_{0},\\
\psi(x), & \mbox{if }(t,x)\in\{T\}\times\mathbb{S}\backslash\mathrm{V}_{0}.
\end{cases}
\]
For each $x\in\mathbb{S}\backslash\mathrm{V}_{0}$, we define a measure
$\tilde{\mathbb{P}}_{x}$ by
\[
\frac{d\tilde{\mathbb{P}}_{x}}{d\mathbb{P}_{x}}=\exp\bigg[\int_{0}^{\sigma^{(T)}}u^{n-1}(T-r,X_{r})\,dW_{r}-\frac{1}{2}\int_{0}^{\sigma^{(T)}}u^{n-1}(T-r,X_{r})^{2}\,d\langle W\rangle_{r}\bigg].
\]
The measure $\tilde{\mathbb{P}}_{x}$ is a probability measure. In
fact, by \citep[Corollary 4.3]{LQ16}, the quadratic process $\langle W\rangle$
is exponentially integrable, i.e.
\[
\sup_{x\in\mathbb{S}}\mathbb{E}_{x}[\exp(\beta\langle W\rangle_{T})]<\infty,
\]
for all $\beta,T>0$. Hence, in view of the uniform boundedness $\Vert u^{n-1}\Vert_{L^{\infty}(0,T;L^{\infty})}\le\Vert\psi\Vert_{L^{\infty}}$,
we see that the Novikov condition is satisfied and therefore $\tilde{\mathbb{P}}_{x}$
is a probability martingale measure. By \eqref{eq:-28},
\[
Y_{t}=Y_{0}+\int_{0}^{t}Z_{r}\,dW_{r}-\Big<\int Z_{r}\,dW_{r},\int u^{n-1}(T-r,X_{r})\,dW_{r}\Big>_{t}.
\]
Notice that
\[
\mathbb{E}_{\nu}\Big(\int_{0}^{T}Z_{r}^{2}\,d\langle W\rangle_{r}\Big)=\int_{0}^{T}\Vert\nabla u^{n}(T-r)\Vert_{L^{2}(\mu)}^{2}\,dr\le\Vert u^{n}\Vert_{L^{2}(0,T;\mathcal{F})}^{2}<\infty,
\]
which implies that $\mathbb{E}_{x}\big(\int_{0}^{T}Z_{r}^{2}\,d\langle W\rangle_{r}\big)<\infty$
for $\nu$-a.e. $x\in\mathbb{S}$ and therefore, for all $x\in\mathbb{S}$
in view of the quasi-continuity of the function $x\mapsto\mathbb{E}_{x}\big(\int_{0}^{T}Z_{r}^{2}\,d\langle W\rangle_{r}\big)$
and the fact that the empty set is the only subset of $\mathbb{S}$
having zero capacity since $\mathcal{F}(\mathbb{S})\subseteq C(\mathbb{S})$.
Hence, $\int Z_{r}dW_{r}$ is a $\mathbb{P}_{x}$-martingale for all
$x\in\mathbb{S}$. Moreover, it follows from the Girsanov theorem
that $\{Y_{t}\}_{t\ge0}$ is a $\tilde{\mathbb{P}}_{x}$-martingale,
and therefore,
\[
u^{n}(T,x)=Y_{0}=\tilde{\mathbb{E}}_{x}(Y_{0})=\tilde{\mathbb{E}}_{x}\big(Y_{\sigma^{(T)}}\big)=\tilde{\mathbb{E}}_{x}\big(\Psi(\sigma^{(T)},X_{\sigma^{(T)}})\big),
\]
which, together with the fact that $|\Psi|\le\Vert\psi\Vert_{L^{\infty}}$,
implies that $\Vert u^{n}(T)\Vert_{L^{\infty}}\le\Vert\psi\Vert_{L^{\infty}}$.
Hence, we conclude that $\Vert u^{n}\Vert_{L^{\infty}(0,T;L^{\infty})}\le\Vert\psi\Vert_{L^{\infty}}$,
and that the sequence $\{u^{n}\}$ is well-defined.

\medskip{}

Now, by Theorem \ref{thm:},
\[
\Vert u^{n}\Vert_{L^{2}(0,T;\mathcal{F})}+\Vert\partial_{t}u^{n}\Vert_{L^{2}(0,T;\mathcal{F}^{-1})}\le CT,\;\;n\in\mathbb{N},
\]
where $C>0$ is a generic constant depending only on $\Vert\psi\Vert_{L^{\infty}}$
which may vary on different occasions. Therefore, there exists a subsequence
$\{u^{n_{k}}\}$ and a $u\in L^{2}(0,T;\mathcal{F}(\mathbb{S}\backslash\mathrm{V}_{0}))$
such that $\partial_{t}u\in L^{2}(0,T;\mathcal{F}^{-1}(\mathbb{S}))$,
and
\begin{equation}
\lim_{k\to\infty}u^{n_{k}}=u,\;\;\text{weakly in }L^{2}(0,T;\mathcal{F}(\mathbb{S}\backslash\mathrm{V}_{0})),\label{eq:-4}
\end{equation}
\begin{equation}
\lim_{k\to\infty}\partial_{t}u^{n_{k}}=\partial_{t}u,\;\;\text{weakly in }L^{2}(0,T;\mathcal{F}^{-1}(\mathbb{S})).\label{eq:-7}
\end{equation}
Since $\Vert u^{n}\Vert_{L^{\infty}(0,T;L^{\infty})}\le\Vert\psi\Vert_{L^{\infty}}$,
the sequence $\{u^{n}\nabla u^{n}\}_{n\in\mathbb{N}_{+}}$ is bounded
in $L^{2}(0,T;L^{2}(\mu))$. By considering a subsequence of $\{u^{n_{k}}\}$
if necessary, we may assume that $\{u^{n_{k}}\nabla u^{n_{k}}\}$
is weakly convergent in $L^{2}(0,T;L^{2}(\mu))$. By the uniqueness
of weak limits,
\begin{equation}
\lim_{k\to\infty}u^{n_{k}}\nabla u^{n_{k}}=u\nabla u,\;\;\text{weakly in }L^{2}(0,T;L^{2}(\mu)).\label{eq:-24}
\end{equation}
Thus, it follows readily from \eqref{eq:-4}\textendash \eqref{eq:-24}
that $u$ is a weak solution to \eqref{eq:-3}. Moreover, the estimate
$\Vert u\Vert_{L^{\infty}(0,T;L^{\infty})}\le\Vert\psi\Vert_{L^{\infty}}$
follows as a corollary of the inequalities $\Vert u^{n}\Vert_{L^{\infty}(0,T;L^{\infty})}\le\Vert\psi\Vert_{L^{\infty}}$.

Testing \eqref{eq:-3} against $u(t)$ and using the Sobolev inequality
\eqref{eq:-25} gives that for any $\epsilon\in(0,1)$ and a.e. $t\in[0,T]$,
\[
\frac{d}{dt}\Vert u(t)\Vert_{L^{2}(\nu)}^{2}\le-\mathcal{E}(u(t))+\Vert\psi\Vert_{L^{\infty}}\,\big[\epsilon\mathcal{E}(u(t))^{1/2}+C_{\epsilon}\Vert u(t)\Vert_{L^{2}(\nu)}\big]\mathcal{E}(u(t))^{1/2}.
\]
Choosing $\epsilon>0$ sufficiently small gives that
\[
\frac{d}{dt}\Vert u(t)\Vert_{L^{2}(\nu)}^{2}\le-\frac{1}{2}\mathcal{E}(u(t))+C\Vert u(t)\Vert_{L^{2}(\nu)}^{2},\;\;\text{a.e.}\ t\in[0,T],
\]
from which the estimate \eqref{eq:-27} follows readily.

\medskip{}

Uniqueness. Suppose that $\bar{u}$ is also a weak solution to \eqref{eq:-3}.
Then $\Vert u\Vert_{L^{\infty}(0,T;L^{\infty})}+\Vert\bar{u}\Vert_{L^{\infty}(0,T;L^{\infty})}\le2\Vert\psi\Vert_{L^{\infty}}$.
For any $\epsilon>0$, testing the equation for $u(t)-\bar{u}(t)$
against $u(t)-\bar{u}(t)$ itself gives that
\[
\frac{d}{dt}\Vert u(t)-\bar{u}(t)\Vert_{L^{2}(\nu)}^{2}\le-\mathcal{E}(u(t)-\bar{u}(t))+C\Vert u(t)-\bar{u}(t)\Vert_{L^{2}(\mu)}\big[\mathcal{E}(u(t))^{1/2}+\mathcal{E}(u(t)-\bar{u}(t))^{1/2}\big],
\]
where, as before, $C>0$ is a generic constant depending only on $\Vert\psi\Vert_{L^{\infty}}$.
For any $\epsilon\in(0,1)$, using the Sobolev inequality \eqref{eq:-25},
we deduce that
\[
\frac{d}{dt}\Vert u(t)-\bar{u}(t)\Vert_{L^{2}(\nu)}^{2}\le\frac{C}{\epsilon}\Vert u(t)-\bar{u}(t)\Vert_{L^{2}(\nu)}^{2}+C(1+\epsilon)\,\big[\mathcal{E}(u(t))+\mathcal{E}(\bar{u}(t))\big].
\]
Therefore, 
\[
\Vert u(t)-\bar{u}(t)\Vert_{L^{2}(\nu)}^{2}\le C(1+\epsilon)\int_{0}^{t}e^{-C(t-s)/\epsilon}\big[\mathcal{E}(u(s))+\mathcal{E}(\bar{u}(s))\big]\,ds.
\]
By the dominated convergence theorem, setting $\epsilon\to0$ in the
above gives that $\Vert u(t)-\bar{u}(t)\Vert_{L^{2}(\nu)}=0,\;t\in[0,T]$,
which proves the uniqueness.

\medskip{}

We now turn to the proof of the joint Hölder continuity. Let $g(t)=u(t)\nabla u(t)$.
Then $|g(t)|\le\Vert\psi\Vert_{L^{\infty}}|\nabla u(t)|$. By an argument
similar to the proof of \eqref{eq:-74} in Theorem \ref{thm:}, we
may show that $\Vert g\Vert_{L^{\infty}(0,T;L^{2}(\mu))}<\infty$.
Since $u$ is the unique solution to
\[
\left\{ \begin{aligned}\partial_{t}u & \,d\nu=\mathcal{L}u\,d\nu+g(t)\,d\mu,\;\;\text{in}\ (0,T]\times(\mathbb{S}\backslash\mathrm{V}_{0}),\\
u & =0\ \;\text{on}\ (0,T]\times\mathrm{V}_{0},\;\;u(0)=\psi,
\end{aligned}
\right.
\]
we may now apply Lemma \ref{lem:-1} and Proposition \ref{prop:}
to obtain the desired joint Hölder continuity.
\end{proof}

\section*{Acknowledgements\addcontentsline{toc}{section}{Acknowledgements}}

The authors would like to thank Professor Ben Hambly for inspiring
discussions and providing with many invaluable suggestions on this
work. We also want to express our appreciation to Professor Masanori
Hino, Dr. Michael Hinz and Professor Alexander Teplyaev for their
helpful comments and discussions.

\noindent \renewcommand\bibname{References}

\noindent  
\end{document}